\theoremstyle{plain}
\def\del  {\partial}
\def\eps{\varepsilon}
\def\R{\mathbb{R}}
\def\N{\mathbb{N}}
\def\C{\mathbb{C}}
 \def\dx{{\rm d}x}
\newtheorem{proposition}{\textbf{Proposition}}
\newtheorem{corollary}{\textbf{Corollary}}
\newtheorem{remark}{\textbf{Remark}}
\newtheorem{theorem}{\textbf{Theorem}}
\newtheorem{lemma}{\textbf{Lemma}}
\newtheorem{definition}{\textbf{Definition}}
\DeclareMathOperator{\diam}{diam}
\DeclareMathOperator{\loc}{loc}
\DeclareMathOperator{\ext}{Ext}
\DeclareMathOperator{\supp}{supp}
\author{
  {\normalsize Michael Hinz}\thanks{Fakult\"{a}t f\"{u}r Mathematik, Universit\"{a}t Bielefeld, Postfach 100131, 33501 Bielefeld,
Germany.}
		\and
  {\normalsize Anna Rozanova-Pierrat}\thanks{CentraleSup\'elec, Universit\'e Paris-Saclay, France.}
    \and
	{\normalsize Alexander Teplyaev}\thanks{Department of Mathematics, University of Connecticut, Storrs, CT 06269-3009 USA.}
		}
\title{Non-Lipschitz uniform domain shape optimization in linear acoustics}
\date{}
\begin{document}
\maketitle
\thispagestyle{fancy}

\begin{abstract}
\noindent We introduce new parametrized classes of shape admissible domains in $\mathbb{R}^n$, $n\geq 2$, and prove that they are compact with respect to the convergence in the sense of characteristic functions, the Hausdorff sense, the sense of compacts and the weak convergence of their boundary volumes. The domains in these classes are bounded $(\eps,\infty)$-domains with possibly fractal boundaries that can have parts of any non-uniform Hausdorff dimension greater or equal to $n-1$ and less than $n$. We prove the existence of optimal shapes in such classes for maximum energy dissipation in the framework of linear acoustics. A by-product 
 of our proof is the result that the class of bounded $(\eps,\infty)$-domains with fixed $\varepsilon$ is stable under Hausdorff convergence. An additional and related result is the Mosco convergence of Robin-type energy functionals on converging domains.  
\end{abstract}

\begin{keywords}
 shape optimization, uniform domains, fractal boundaries, traces, extensions, mixed boundary value problem, Mosco convergence, variational convergence
\end{keywords}

\section{Introduction}

The first step towards the solution of a shape optimization problem for a given functional is to prove the existence of a shape which is optimal in a certain class of shapes in the sense that it minimizes the functional. In the context of a boundary value problem for a partial differential equation, the functional typically is an energy of the respective solution, and the class of shapes in which an optimal one sought for is a class of domains. Examples for such classes of domains are the collections of all Lipschitz domains contained in a given bounded open set $D$ and satisfying the $\varepsilon$-cone condition for the same $\varepsilon>0$, see \cite[Section 2.4]{HENROT-e}. One specific feature of these classes of domains is their compactness with respect to the convergence in the Hausdorff sense, in the sense of compacts, and in the sense of characteristic functions, \cite[Theorem 2.4.10]{HENROT-e}. This is significant because suitable compactness properties are a prerequisite needed to prove the existence of an optimal shape, \cite{CHENAIS-1975,FEIREISL-2002-1,HENROT-e}. A second specific feature of these classes is that their elements $\Omega$ are Sobolev extension domains, \cite{CALDERON-1961, CHENAIS-1975, JONES-1981, STEIN-1970}, and moreover, that the linear extension operators extending an element of $W^{k,p}(\Omega)$ to an element of $W^{k,p}(\R^n)$ have a norm bound depending only on $n$, $\varepsilon$, $p$ and $k$, and therefore valid uniformly for all domains $\Omega$ in the fixed class. A third specific feature, particularly useful to discuss boundary value problems in variational formulation, is that for domains $\Omega$ from these classes, there are bounded linear trace and extension operators between $W^{1,2}(\Omega)$ and suitable function spaces on the boundary $\partial\Omega$ and that their operator norms, too, are uniformly bounded for all domains in the class.

We are motivated by recent results on the existence of optimal shapes, realizing the infimum of the acoustical energy for a frequency boundary absorption problem over a class of Lipschitz domains~\cite{optimal}. There the optimal shapes themselves were not necessarily elements of the same class so that the infimum cannot be claimed to be a minimum. We are also motivated by corresponding numerical experiments~\cite[Section 5]{optimalandefficient}, in which a multiscale behavior of the optimal shapes was observed. This multiscale behavior is needed to have almost optimal shapes on a fixed bounded range of frequencies~\cite[Section~5.2]{optimalandefficient}. As the number of geometrical scales grows with the considered frequency range (with
sizes converging to $0$), this leads to fractal type shapes~\cite{optimalPhys} on an unbounded frequency range.

Here we address shape optimization problems for certain classes of domains more general than Lipschitz domains. Well-established results on extension operators \cite{JONES-1981, Rogers}, and classical results on their geometric structure \cite{MARTIO-1979, Vaisala88}, suggest to look at classes of bounded $(\varepsilon,\infty)$-domains, also referred to as \emph{bounded uniform domains}, for fixed $\varepsilon>0$. These classes contain Lipschitz domains, but also domains with rough non-Lipschitz boundary, such as snowflake domains \cite{WALLIN-1991}. Since we are particularly interested in the existence of optimal shapes for certain mixed boundary value problems, also trace and extension results for the boundaries of such domains matter. Classical snowflake domains have boundaries that are $d$-sets, for which properties of trace and extension operators are relatively well-known \cite{JONSSON-1984, TRIEBEL-1997}. 
By definition $d$-sets are, roughly speaking, everywhere of a fixed Hausdorff dimension $n-1\leq d\leq n$, see Remark~\ref{R:compare} below. Using more general trace results as in \cite[Chapter 7]{AH96} and \cite{Biegert2009} or \cite{JONSSON-1994}, we can also permit (bounded) $(\varepsilon,\infty)$-domains whose boundaries may have parts of different Hausdorff dimensions. This seems particularly adequate for shape optimization problems in which parts of the boundary may vary, but other (and possibly more regular) parts are kept fixed. 
To satisfy the respective hypothesis of these trace results, the boundaries have to carry measures satisfying specific scaling properties, see Section~\ref{S:measures}. To discuss the existence of optimal shapes in a certain class of domains, we have to discuss the convergence of measures on the boundaries and to guarantee the stability of the specified class of domains under this convergence. This can be done because the mentioned scaling conditions behave well under weak convergence of measures, as observed in Proposition~\ref{prop-A} and Lemma~\ref{measure-lemma}. This fact is in line with the observation made in~\cite{optimal} that weak limits of $(n-1)$-dimensional Hausdorff measures on Lipschitz boundaries may not exactly be Hausdorff measures again, but measures equivalent to Hausdorff measures.

We implement the mentioned ideas in Definition~\ref{DefShapeAdmis}, where we define parametrized classes of bounded $(\varepsilon,\infty)$-domains $\Omega$ in $\mathbb{R}^n$, $n\geq 2$, with fixed $\varepsilon>0$, that are all contained in a fixed bounded open set $D\subset \mathbb{R}^n$ and all contain a fixed non-empty open set $D_0$ (to prevent them from collapsing to the empty set under Hausdorff convergence) and whose boundaries $\partial\Omega$ are the supports of Borel measures satisfying Ahlfors regularity type conditions with fixed exponents and constants, we call them \emph{boundary volumes}. Since these parametrized classes of domains are well suited to shape optimization problems, we refer to their elements as \emph{shape admissible domains}. The boundaries of shape admissible domains may have pieces that are smooth or Lipschitz, self-similar fractals, or $d$-sets, and in general, they may have multifractal structure. Our main result on shape admissible domains is Theorem~\ref{T:compact}. It asserts that each parametrized class of shape admissible domains is compact with respect to the convergence in the Hausdorff sense, in the sense of compacts, in the sense of characteristic functions, and the sense of weak convergence of the boundary volumes. It also concludes that the weak convergence of the boundary measures entails the other convergences just mentioned. One ingredient for this result is Theorem~\ref{T:epsinftystable}, which may be of independent interest. It states that the class of bounded $(\varepsilon,\infty)$-domains with fixed $\varepsilon>0$, and contained in a fixed bounded open set $D\subset \mathbb{R}^n$ (with $n\geq 2$), is stable under convergence in the Hausdorff sense. To connect these results with applications to partial differential equations, we review the trace and extension results from \cite{AH96, Biegert2009, JONSSON-1994} and from \cite{JONES-1981, Rogers} and collect some consequences in Theorem~\ref{ThGENERIC} and subsequent corollaries. To extend functions from closed sets to the whole space, we can use harmonic extension operators, which in the case of an $L^2$-framework are linear. These trace and extension results are formulated for a wider class of domains, which (roughly following \cite{ARFI-2017, DEKKERS-2020, ROZANOVA-PIERRAT-2020}) we call \emph{$W^{1,2}$-admissible domains}, Definition~\ref{DefSAdmis}. As the first application to boundary value problems, we show the Mosco convergence of energy functionals associated with Robin problems on a sequence of suitably convergent domains, Theorem~\ref{T:Mosco}. We then turn to linear acoustics and state the well-posedness of a mixed boundary value problem for the Helmholtz equation on $W^{1,2}$-admissible domains, Theorem~\ref{ThWPH}. This is a generalization of an analogous result in~\cite{optimal} for $d$-set boundaries. 
On a fixed class of shape admissible domains, the (weak) solutions of the Helmholtz problems admit a uniform bound. This allows
to conclude the existence of an optimal shape minimizing the acoustical energy by absorption, Theorem~\ref{ThExistOptimalShape}. In \cite{optimal}, an analogous existence result had been proved in the framework of Lipschitz boundaries.
 
Related results on linear wave propagation problems with an irregular boundary can be found in~\cite{C-WHM2017,C-WH2018,CHM2019}.
Shape optimization problems in the context of fluid dynamics had been solved in~\cite{FEIREISL-2002-1} for domains with uniform thick boundaries, a class suitable to study problems with homogeneous Dirichlet boundary conditions. Further results on compact classes of admissible domains developed for problems with homogeneous Dirichlet boundary condition can be found in \cite{SVERAK-1993} (dimension two) and \cite{BUCUR-1995} (higher dimensions). A free discontinuity approach to a class of shape optimization problems involving a Robin condition on a free boundary had been studied in~\cite{BUCUR-2016}. Very recent results on shape optimization in classes of uniform domains, but with goals and methods quite different from ours, can be found in \cite{DuLiWang}.

The paper is organized as follows. 
In~Section~\ref{sec-epsilon-infty} we prove the stability of bounded $(\varepsilon,\infty)$-domains under Hausdorff convergence,~Theorem~\ref{T:epsinftystable}.
In~Section~\ref{S:measures}, we recall the scaling properties for measures, including those specified in \cite{JONSSON-1994}, and verify their stability under weak convergence. 
In~Section~\ref{S:domains}, we recall several different notions of convergence for domains and prove compactness results, Theorem~\ref{T:nothing_extra}. 
We then define parametrized classes of shape admissible domains, Definition~\ref{DefShapeAdmis}, and finally prove   Theorem~\ref{T:compact} on compactness and convergence. 
In~Section~\ref{S:traceExt}, we introduce the functional framework needed for the well-posedness of various problems, including the Helmholtz problem and for the shape optimization problem and collect properties of the trace and extension operators acting on $W^{1,2}$-admissible domains, Theorem~\ref{ThGENERIC}. 
The Mosco convergence result, Theorem~\ref{T:Mosco}, is proved in ~Section~\ref{S:Mosco}. 
In~Section~\ref{S:shapeOp} discuss the well-posedness of a mixed boundary valued problem for the Helmholtz equation, Theorem~\ref{ThWPH}, and solve the existence problem for an optimal shape in a fixed class of shape admissible domains in Theorem~\ref{ThExistOptimalShape}.


%

By $B(x,r)$ we denote the open ball in $\R^n$ centered at $x$ and of radius~$r$. We write $\lambda^n$ for the $n$-dimensional Lebesgue measure.
We assume $n\geq 2$ throughout the paper. 

\section{Bounded uniform domains and their Hausdorff convergence stability}\label{sec-epsilon-infty}

For the class of bounded Lipschitz domains satisfying the cone condition with the same parameter 
the stability  under Hausdorff convergence is proved in 
\cite[Theorem 2.4.10]{HENROT-e}. 
We aim at a larger class, based on the following classical definition, \cite{AHMNT, JONES-1981,JONSSON-1984, MARTIO-1979, Vaisala88, WALLIN-1991}.  Recall that a domain in $\mathbb{R}^n$ is a connected open subset of $\mathbb{R}^n$. 

\begin{definition}\label{DefEDD}
	Let $\eps > 0$. A bounded domain $\Omega\subset \R^n$ is called an \emph{$(\eps,\infty)$-uniform domain} if for all $x, y \in \Omega$ there is a rectifiable curve $\gamma\subset \Omega$ with length $\ell(\gamma)$ joining $x$ to $y$ and satisfying
	\begin{enumerate}
		\item[(i)] $\ell(\gamma)\le \frac{|x-y|}{\eps}$ and
		\item[(ii)] $d(z,\del \Omega)\ge \eps |x-z|\frac{|y-z|}{|x-y|}$ for $z\in \gamma$.
	\end{enumerate}
\end{definition}

\begin{remark}\label{R:cigars}
	Condition (ii) is equivalent to saying that, in the terminology of \cite[2.1 and 2.4]{Vaisala88}, the \emph{$\frac{1}{\varepsilon}$-cigar} 
	\begin{equation}\label{E:cigar}
		C(\gamma,\varepsilon):=\bigcup_{z\in\gamma} B(z,\varepsilon\lambda(z)),\quad  \text{where}\quad \lambda(z)=|x-z|\frac{|y-z|}{|x-y|},\quad z\in\gamma,
	\end{equation}
	is contained in $\Omega$. See also \cite[2.1 and 2.12]{MARTIO-1979}.
\end{remark}


For any closed set $K\subset \mathbb{R}^n$ and $\alpha>0$ we write $(K)_\alpha:=\{x\in\mathbb{R}^n: d(x,K)\leq \alpha\}$ for its closed (outer) $\alpha$-parallel set. Recall that the \emph{Hausdorff distance} between two compact sets $K_1, K_2\subset \mathbb{R}^n$ is defined as 
\[d^H(K_1,K_2):=\inf\{\alpha>0: K_1\subset (K_2)_\alpha \text{ and } K_2\subset (K_1)_\alpha\}.\]  
A sequence $(K_m)_m$ of compact sets $K_m\subset \mathbb{R}^n$ is said to \emph{converge} to a compact set $K\subset \mathbb{R}^n$ \emph{in the Hausdorff sense} if $\lim_{m\to \infty} d^H(K_m,K)=0$.

Let $D\subset \mathbb{R}^n$ be a bounded open set. A sequence $(\Omega_m)_m$ of open sets $\Omega_m\subset D$ is said to \emph{converge} to an open set $\Omega\subset D$ \emph{in the Hausdorff sense} if 
\[d^H(\overline{D}\setminus \Omega_m,\overline{D}\setminus \Omega)\to 0\quad \text{ as }\quad m\to \infty,\] 
\cite[Definition 2.2.8]{HENROT-e}, which does not depend on the choice of $D$, \cite[Remark 2.2.11]{HENROT-e}.

\begin{theorem}\label{T:epsinftystable}
	Let $D\subset \mathbb{R}^n$ be a bounded open set and let $\varepsilon>0$. Any sequence $(\Omega_m)_m$ of 
	$(\varepsilon,\infty)$-domains 
	contained in $D$  has a subsequence which converges to an $(\varepsilon,\infty)$-domain $\Omega\subset D$ in the Hausdorff sense. 
\end{theorem}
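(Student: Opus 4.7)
The plan is to first extract a Hausdorff-convergent subsequence by the Blaschke selection theorem, producing a candidate open limit $\Omega\subset D$, and then transfer the $(\varepsilon,\infty)$ property from the $\Omega_m$'s to $\Omega$ by passing to the limit in the curves and cigars supplied by Definition~\ref{DefEDD}. Concretely, the compact sets $K_m:=\overline{D}\setminus\Omega_m$ all lie in the fixed compact set $\overline{D}$, so after extracting a subsequence (not relabeled) they converge in Hausdorff distance to some compact $K\subset\overline{D}$. Since each $\Omega_m$ is open and contained in $D$, we have $\partial D\subset K_m$ for every $m$, whence $\partial D\subset K$, so $\Omega:=D\setminus K$ is an open subset of $D$ with $\overline{D}\setminus\Omega=K$; this is exactly the statement that $\Omega_m\to\Omega$ in the Hausdorff sense of open sets.

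To show that $\Omega$ satisfies Definition~\ref{DefEDD}, fix $x,y\in\Omega$. The openness of $\Omega$ together with the Hausdorff convergence $K_m\to K$ implies $x,y\in\Omega_m$ for all sufficiently large $m$. Definition~\ref{DefEDD} applied to $\Omega_m$ yields a rectifiable curve $\gamma_m\subset\Omega_m$ joining $x$ and $y$ with $\ell(\gamma_m)\le|x-y|/\eps$ and with cigar $C(\gamma_m,\eps)\subset\Omega_m$. Parametrize each $\gamma_m$ by constant speed on $[0,1]$ to obtain a sequence of maps into $\overline{D}$ that are Lipschitz with constant at most $|x-y|/\eps$; by Arzel\`a--Ascoli a further subsequence converges uniformly to a Lipschitz limit curve $\gamma\colon[0,1]\to\overline{D}$ with $\gamma(0)=x$, $\gamma(1)=y$ and $\ell(\gamma)\le|x-y|/\eps$ (the latter by lower semicontinuity of length under uniform convergence).

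The crucial step is to propagate the cigar condition (ii) to $\gamma$. For $t\in(0,1)$ set $z=\gamma(t)$ and $z_m=\gamma_m(t)$, so $z_m\to z$ and $\lambda_m(z_m):=|x-z_m||y-z_m|/|x-y|\to\lambda(z)$. Given $w$ with $|w-z|<\eps\lambda(z)$, the strict inequality combined with these convergences yields $\rho>0$ and $m_0$ such that $B(w,\rho)\subset B(z_m,\eps\lambda_m(z_m))\subset\Omega_m$ for every $m\ge m_0$. If $w$ belonged to $\overline{D}\setminus\Omega=K$, the Hausdorff convergence $K_m\to K$ would eventually place a point of $K_m$ inside $B(w,\rho)$, contradicting $B(w,\rho)\subset\Omega_m$. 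Hence $B(z,\eps\lambda(z))\subset\Omega$, which is condition (ii); in particular $\gamma\subset\Omega$, and $\Omega$ is path-connected.

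The main obstacle is the last step: one must quantitatively combine the openness of the cigar balls (strict inequalities) with the Hausdorff convergence of the closed complements $K_m$, relying on the continuous dependence of $\lambda(\cdot)$ on $z$ to supply a uniform margin. A subsidiary point is to rule out the degenerate case $\Omega=\emptyset$ so that $\Omega$ is genuinely a nonempty connected bounded domain; this can be handled by a mild non-degeneracy hypothesis on $(\Omega_m)_m$ (as is arranged in the shape-admissible setting, where each $\Omega_m$ contains a common nonempty open set $D_0$).
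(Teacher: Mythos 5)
Your proposal is correct and follows essentially the same route as the paper: extract a Hausdorff-convergent subsequence of the complements, then, for each pair of points of the limit set, pass to the limit in the connecting curves (compactness of equi-Lipschitz parametrizations, which is exactly what the paper's Lemma~\ref{L:cigar} encapsulates via Fr\'echet convergence and the Alexandrov--Reshetnyak selection theorem) and propagate the cigar condition using the margin supplied by the strict inequality $|w-z|<\eps\lambda(z)$. The only differences are cosmetic --- the paper establishes Hausdorff convergence $C(\gamma_m,\eps)\to C(\gamma,\eps)$ of the cigars as sets and then invokes the general stability of inclusions under Hausdorff convergence, whereas you verify $B(\gamma(t),\eps\lambda(\gamma(t)))\subset\Omega$ pointwise; both are valid, and the degenerate possibility $\Omega=\emptyset$ that you flag is likewise left implicit in the paper's argument.
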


This implies in particular that the limit $\Omega\subset D$ in the Hausdorff sense of a convergent sequence $(\Omega_m)_m$ of $(\varepsilon,\infty)$-domains $\Omega_m\in D$ is an $(\varepsilon,\infty)$-domain.

Our proof of  Theorem~\ref{T:epsinftystable} is based on  Remark~\ref{R:cigars}. Recall that the \emph{Fr\'echet distance} between two curves $\gamma_1,\gamma_2\subset \mathbb{R}^n$ is defined as
\[d^F(\gamma_1,\gamma_2):=\inf_{(g_1,g_2)}\max_{t\in [0,1]} d(g_1(t),g_2(t)),\]
where the infimum is taken over all pairs $(g_1,g_2)$ of parametrizations $g_i:[0,1]\to\mathbb{R}^n$ of $\gamma_i$, $i=1,2$, \cite[Section I.1.4]{ALEXANDROV-RESHETNYAK}. A sequence $(\gamma_m)_m$ of curves $\gamma_m\subset \mathbb{R}^n$ is said to \emph{converge} to a curve $\gamma\subset \mathbb{R}^n$ \emph{in the Fr\'echet sense} if $d^F(\gamma_m,\gamma)\to 0$ as $m\to\infty$.

\begin{lemma}\label{L:cigar}
	Let $D\subset \mathbb{R}^n$, $n\geq 2$, be a bounded open set and $\varepsilon>0$. Suppose that $(\gamma_m)_m$ is a sequence of rectifiable curves $\gamma_m$ with distinct end points $x_m$ and $y_m$ in $D$, respectively, such that $\ell(\gamma_m)\leq \frac{|x_m-y_m|}{\varepsilon}$ and $C(\gamma_m,\varepsilon)\subset D$ for all $m$. If $x$ and $y$ are distinct points in $D$ such that $x_m\to x$ and $y_m\to y$, then there are a sequence of indexes $(m_k)_k$ and a rectifiable curve $\gamma$ connecting $x$ and $y$ with $\ell(\gamma)\leq \frac{|x-y|}{\varepsilon}$ and $C(\gamma,\varepsilon)\subset D$.
\end{lemma}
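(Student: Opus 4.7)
The plan is to parametrize each $\gamma_m$ by constant speed on $[0,1]$, extract a uniformly convergent subsequence via Arzel\`a--Ascoli, and then verify that the limit curve inherits the length bound and the cigar condition.

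First I would fix constant-speed parametrizations $g_m:[0,1]\to\overline{D}$ of $\gamma_m$ with $g_m(0)=x_m$ and $g_m(1)=y_m$. Each $g_m$ is Lipschitz with constant $\ell(\gamma_m)\le |x_m-y_m|/\varepsilon \le \diam(D)/\varepsilon$, so the family is uniformly bounded and equicontinuous. Arzel\`a--Ascoli yields a subsequence $g_{m_k}$ converging uniformly to a continuous map $g:[0,1]\to\overline{D}$ with $g(0)=x$ and $g(1)=y$. Passing to the limit in the inequality $|g_{m_k}(s)-g_{m_k}(t)|\le \ell(\gamma_{m_k})|s-t|$ gives $\mathrm{Lip}(g)\le \liminf_k \ell(\gamma_{m_k})\le |x-y|/\varepsilon$. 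Setting $\gamma:=g([0,1])$, the curve $\gamma$ connects $x$ to $y$ and
\[
\ell(\gamma)\le \int_0^1 |g'(t)|\,dt \le \mathrm{Lip}(g) \le \frac{|x-y|}{\varepsilon},
\]
which is condition~(i).

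For the cigar, I would fix an arbitrary $z\in \gamma$, choose $t\in [0,1]$ with $g(t)=z$, and set $z_k:=g_{m_k}(t)$. Then $z_k\to z$, and since $x\ne y$,
\[
\lambda_k := \frac{|x_{m_k}-z_k|\,|y_{m_k}-z_k|}{|x_{m_k}-y_{m_k}|} \longrightarrow \lambda(z):=\frac{|x-z|\,|y-z|}{|x-y|}.
\]
By hypothesis $B(z_k,\varepsilon\lambda_k)\subset C(\gamma_{m_k},\varepsilon)\subset D$ for every $k$. Given any $w\in B(z,\varepsilon\lambda(z))$, the strict inequality $|w-z|<\varepsilon\lambda(z)$ together with $z_k\to z$ and $\lambda_k\to\lambda(z)$ forces $|w-z_k|<\varepsilon\lambda_k$ for all sufficiently large $k$, so $w\in D$. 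Therefore $B(z,\varepsilon\lambda(z))\subset D$, and taking the union over $z\in\gamma$ yields $C(\gamma,\varepsilon)\subset D$.

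The main (modest) obstacle is coordinating the two limits in the cigar step: convergence of the centers $z_k\to z$ and of the radii $\varepsilon\lambda_k\to \varepsilon\lambda(z)$ must be simultaneous, which is exactly why parametrizing all curves on the common interval $[0,1]$ and choosing $z_k=g_{m_k}(t)$ for the same $t$ as $z=g(t)$ is essential. The hypothesis $x\ne y$ is used twice, once to keep $\lambda$ nonsingular and once to ensure that $g$ is nonconstant so that $\gamma$ is a genuine rectifiable curve rather than a point.
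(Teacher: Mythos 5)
Your proof is correct. It follows the same overall skeleton as the paper's: extract a uniformly convergent subsequence of parametrized curves, then pass the length bound and the cigar condition to the limit. For the compactness you invoke Arzel\`a--Ascoli directly on constant-speed parametrizations, which is a self-contained substitute for the paper's appeal to the Fr\'echet-convergence compactness and length lower-semicontinuity results of Alexandrov--Reshetnyak; that is a matter of packaging. The genuine divergence is in the cigar step: you check the inclusion $C(\gamma,\varepsilon)\subset D$ one ball at a time, observing that any $w\in B(z,\varepsilon\lambda(z))$ with $z=g(t)$ eventually lies in $B(g_{m_k}(t),\varepsilon\lambda_k)\subset D$, whereas the paper proves a quantitative two-sided comparison of the inner $\alpha$-parallel sets of $C(\gamma,\varepsilon)$ and $C(\gamma_m,\varepsilon)$ and concludes the stronger statement that $C(\gamma_{m_k},\varepsilon)\to C(\gamma,\varepsilon)$ in the Hausdorff sense. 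Your argument is shorter and fully proves the lemma as stated. Be aware, however, that the stronger conclusion --- Hausdorff convergence of the cigars --- is precisely what is invoked when this lemma is applied in the proofs of Theorem~\ref{T:epsinftystable} and Theorem~\ref{T:compact}, where the cigars sit inside varying domains $\Omega_m$ rather than a fixed $D$ and one needs the stability of inclusions under Hausdorff limits. Your one-sided, pointwise containment does not by itself deliver that, so as a drop-in replacement it would leave those later arguments incomplete; upgrading it is not difficult (your convergences $g_{m_k}\to g$ and $\lambda_k\to\lambda$ are in fact uniform in $t$, and the reverse inclusion is handled symmetrically), but it has to be done.
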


\begin{proof}
	Since $\overline{D}$ is compact, we can find a sequence of indexes $(m_k)_k$ and a rectifiable curve $\gamma$ of length $\ell(\gamma)\leq \lim_k \ell(\gamma_{m_k})$ such that $\gamma_{m_k}\to \gamma$ in $\overline{D}$ in the Fr\'echet sense as $k\to \infty$, \cite[Theorems 2.1.5 and 2.1.2]{ALEXANDROV-RESHETNYAK}. To save notation, we relabel and denote this convergent sequence again by $(\gamma_m)_m$. One can find suitable parametrizations $g:[0,1]\to \overline{D}$ and $g_m:[0,1]\to \overline{D}$ for $\gamma$ and $\gamma_m$ such that $\lim_m g_m=g$ uniformly on $[0,1]$, \cite[Lemma 1.4.1]{ALEXANDROV-RESHETNYAK}. This implies in particular that, without loss of generality, $\gamma(0)=x$ and $\gamma(1)=y$. Given $\alpha>0$, consider the open inner $\alpha$-parallel set 
	\begin{equation}\label{E:psetnew}
		\left\lbrace y\in C(\gamma,\varepsilon):d(y,\partial C(\gamma,\varepsilon))>\alpha\right\rbrace =\bigcup_{t\in [0,1]: \varepsilon\lambda(g(t))>\alpha} B(g(t),\varepsilon\lambda(g(t))-\alpha)
	\end{equation}
	of $C(\gamma,\varepsilon)$. For any sufficiently large $m$ we have 
	\[\left||x_m-y_m|^{-1}-|x-y|^{-1}\right|<\alpha(2\varepsilon)^{-1}(\diam(D))^{-2}\wedge (\alpha/2)\]
	and 
	\[\sup_{t\in [0,1]}|g(t)-g_m(t)|<\alpha|x-y|(8\varepsilon \diam(D))^{-1}\wedge (\alpha/2).\]
	Writing $\lambda_m$ for the function defined as $\lambda$ in ~\eqref{E:cigar}, but with $x_m$, $y_m$ and $\gamma_m$ in place of $x$, $y$ and $\gamma$, respectively, we observe that for any such $m$ we have 
	\begin{align}
		\varepsilon|\lambda(g(t))&-\lambda_m(g_m(t))|\notag\\
		&<\varepsilon \left||x_m-y_m|^{-1}-|x-y|^{-1}\right| |x_m-g_m(t)||y_m-g_m(t)|\notag\\
		&\qquad\qquad +\varepsilon |x-y|\left||x_m-g_m(t)||y_m-g_m(t)|-|x-g(t)||y-g(t)|\right|\notag <\alpha\notag
	\end{align}
	for any $t\in [0,1]$. Consequently $B(g(t),\varepsilon \lambda(g(t))-\alpha)\subset B(g_m(t), \varepsilon \lambda_m(g_m(t))$ for all $t$ with $\lambda(g(t))>\alpha$, hence the set in ~\eqref{E:psetnew} is contained in 
	\[C(\gamma_m,\varepsilon)=\bigcup_{t\in [0,1]}B(g_m(t),\varepsilon\lambda_m(g_m(t)).\] 
	In a similar fashion we see that for such $m$ the set $\{y\in C(\gamma_m,\varepsilon): d(y,\partial C(\gamma_m,\varepsilon))>\alpha\}$ is contained in $C(\gamma,\varepsilon)$. Together this shows that $\overline{D}\setminus C(\gamma_m,\varepsilon)\subset (\overline{D}\setminus C(\gamma,\varepsilon))_\alpha$ and $\overline{D}\setminus C(\gamma,\varepsilon)\subset (\overline{D}\setminus C(\gamma_m,\varepsilon))_\alpha$ for large $m$, what shows that $C(\gamma_m,\varepsilon)\to C(\gamma,\varepsilon)$ in Hausdorff sense as $m\to \infty$.
\end{proof}

\begin{proof}[Proof of Theorem~\ref{T:epsinftystable}]
	If $(\Omega_m)_m$ is a sequence of $(\varepsilon,\infty)$-domains contained in $D$, then by  \cite[Corollary 2.2.26]{HENROT-e}
	we can find a sequence of indexes $(m_k)_k$ and an open set $\Omega\subset D$ so that $\Omega_{m_k}\to \Omega$ in the Hausdorff sense as $k\to \infty$. To save notation, we relabel and denote this sequence again by $(\Omega_m)_m$. If $x$ and $y$ are two different points in $\Omega$, then by \cite[Proposition 2.2.17]{HENROT-e} both $x$ and $y$ belong to $\Omega_{m}$ for any large enough $m$. 
	For each such $m$ let $\gamma_m\subset \Omega_m$ be a rectifiable curve of length $\ell(\gamma_m)\leq \frac{|x-y|}{\varepsilon}$ connecting $x$ and $y$ and let $C(\gamma_m,\varepsilon)$ be as in ~\eqref{E:cigar}, but with $\gamma_m$ in place of $\gamma$. An application of Lemma~\ref{L:cigar} with $x_m=x$ and $y_m=y$ for all $m$ shows the existence of a sequence of indexes $(m_k)_k$ and a rectifiable curve $\gamma$ connecting $x$ and $y$ such that the sets $C(\gamma_{m_k},\varepsilon)$ converge to $C(\gamma,\varepsilon)$ in the Hausdorff sense. Since $C(\gamma_m,\varepsilon)\subset \Omega_m$ for each $m$, it follows that $C(\gamma,\varepsilon)\subset \Omega$, \cite[(2.16) in 2.2.3.2]{HENROT-e}.
\end{proof}

\section{Measures on closed subsets of $\mathbb{R}^n$, scaling and   stability}
\label{S:measures}

We consider Borel measures on closed subsets of $\mathbb{R}^n$ having specific scaling properties. 
In later sections, we will study boundary value problems, when the closed subsets under consideration can be the boundaries of the respective domains, and the measures can replace the surface measure.

\subsection{Stability of lower and upper Ahlfors scaling conditions} 
For a Borel measure $\mu$ with $K:=\supp \mu$, exponents $0<s\leq n$, $0\leq d\leq n$ and constants $c_s^A>0$ and $c_d^A>0$
we  recall the local lower and upper Ahlfors regularity condition 
\begin{equation}\label{E:lowreg}
\mu(B(x,r))\geq c_{s}^A\:r^s, \quad x\in K,\quad  0<r\leq 1,
\end{equation}
which implies $\dim_H K \leq s$, where $\dim_H K$ denotes the Hausdorff dimension of $K$, \cite{FALCONER,MATTILA}, and 
\begin{equation}\label{E:upreg}
\mu(B(x,r))\leq c_{d}^A\:r^d, \quad x\in K, \quad 0<r\leq 1,
\end{equation}
which implies $\dim_H K \geq d$. If $\mu$ satisfies both ~\eqref{E:lowreg} and ~\eqref{E:upreg}, then $d\leq s$.

\begin{remark}\label{E:potentialsarefine}
Obviously, any Borel measure satisfying ~\eqref{E:upreg} is locally finite. Note also that any Borel measure on $\mathbb R^n$ is regular by \cite[Theorem 2.18]{Rudin}.
\end{remark}

Because it has convenient stability properties (see Proposition~\ref{prop-A} below),
we also introduce the local lower Ahlfors regularity condition
\begin{equation}\label{E:lowreg-w}
\mu(\overline{B(x,r)})\geq \bar c_{s}^A\:r^s, \quad x\in  K,\quad  0<r\leq 1.
\end{equation}
Varying the radii, \eqref{E:lowreg-w} is easily seen to be equivalent to \eqref{E:lowreg}, and using \cite[Theorem 6.9 (ii)]{MATTILA} we therefore obtain the following.	
	
	\begin{proposition}\label{C:Kisnullset}
		If $0 < s<n$  in   ~\eqref{E:lowreg-w},   then  $K$ has empty interior,
		$\lambda^n(K)=0$, and $\dim_H K\leq s$.
	\end{proposition}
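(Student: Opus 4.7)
The plan is to deduce all three conclusions from the bound $\dim_H K \le s$, which follows directly from Mattila's density--measure comparison (Theorem~6.9(ii) of \cite{MATTILA}) applied to the lower Ahlfors estimate \eqref{E:lowreg-w}.

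First I would rewrite \eqref{E:lowreg-w} as a pointwise lower $s$-density bound: for every $x\in K$,
\[
\liminf_{r\downarrow 0}\frac{\mu(\overline{B(x,r)})}{r^s}\;\ge\;\bar c_s^A\;>\;0.
\]
Theorem~6.9(ii) of \cite{MATTILA} then yields a quantitative comparison of the form $\mathcal{H}^s(A)\le C(\bar c_s^A)^{-1}\mu(A)$ for Borel $A\subset K$ (with $C$ depending only on $s$). Covering $K$ by a countable family of balls and applying the comparison on each gives $\mathcal{H}^s(K)<\infty$ locally, so $\dim_H K\le s$. Because $s<n$, this forces $\mathcal{H}^n(K)=0$, and since Lebesgue measure on $\R^n$ is a positive constant multiple of $\mathcal{H}^n$, we conclude $\lambda^n(K)=0$. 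For the last assertion, $K=\supp\mu$ is closed by definition, and every nonempty open subset of $\R^n$ has strictly positive Lebesgue measure; an interior point of $K$ would therefore contradict $\lambda^n(K)=0$, so $K$ has empty interior.

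The only subtlety I anticipate is that Mattila's comparison is typically stated under local finiteness of $\mu$ on the set to which it is applied. If this cannot be read off from \eqref{E:lowreg-w} alone, one can bypass it via a direct $5r$-covering argument: given any cover of $K\cap B_R$ by balls of radius $r_i\le 1$, a Vitali-type disjointification together with \eqref{E:lowreg-w} converts $\sum r_i^s$ into an upper bound controlled by $\mu$ on a slight enlargement of $B_R$, which suffices to conclude $\dim_H K\le s$ without any extra finiteness assumption on $\mu$. Once the dimension bound is in hand, the remaining two conclusions, as sketched above, are immediate.
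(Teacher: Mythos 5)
Your proposal is correct and follows essentially the same route as the paper, which likewise derives all three conclusions by feeding the lower Ahlfors bound \eqref{E:lowreg-w} (after noting its equivalence to the open-ball version \eqref{E:lowreg}) into \cite[Theorem 6.9(ii)]{MATTILA} to get $\mathcal{H}^s$-finiteness and hence $\dim_H K\le s$, $\lambda^n(K)=0$, and empty interior. Your extra remark on local finiteness and the $5r$-covering fallback is a reasonable precaution but not a departure from the paper's argument.
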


As usual we say that a sequence $(\mu_m)_m$ of Borel measures $\mu_m$ 
\emph{converges weakly} 
to a Borel measure $\mu$ if  
\[\lim_{m\to \infty} \int_{\mathbb{R}^n} f\:d\mu_m=\int_{\mathbb{R}^n} f\:d\mu,\quad f\in C_b(\mathbb{R}^n).\]

The following convenient stability result is relatively well known.

\begin{proposition}\label{prop-A}
Suppose that $D\subset \mathbb{R}^n$ is a bounded open set, $\mu_m$ are Borel measure with $\supp\mu_m\subset \overline{D}$ for all $m$ and that $\mu_m \to \mu $  weakly. 
	\begin{enumerate}  
		\item[(i)] If $\mu_m$ satisfy ~\eqref{E:upreg}, then the limit measure $\mu$ also satisfies ~\eqref{E:upreg}. 
		\item[(ii)] If $\mu_m$ satisfy   ~\eqref{E:lowreg-w},   then the limit  $\mu$ also satisfies  
 ~\eqref{E:lowreg-w}
  and $$\supp\mu_m \to \supp \mu \quad \hbox{for } m\to +\infty$$  in the Hausdorff sense. 
	\end{enumerate}
\end{proposition}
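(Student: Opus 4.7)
The plan is to combine the Portmanteau theorem with a support-approximation observation: if $\mu_m\to\mu$ weakly and $x\in\supp\mu$, then there is a sequence $x_m\in\supp\mu_m$ with $x_m\to x$. This follows because $\mu(B(x,1/k))>0$ for every $k$, the open-set Portmanteau inequality gives $\mu_m(B(x,1/k))>0$ for $m$ large, and a diagonal extraction produces the desired sequence. The constant test function $f\equiv 1$ gives $\mu_m(\R^n)\to\mu(\R^n)<\infty$, so Portmanteau applies in full: $\mu(U)\leq\liminf_m\mu_m(U)$ for open $U$ and $\mu(C)\geq\limsup_m\mu_m(C)$ for closed $C$; applied to $U=\R^n\setminus\overline{D}$ it also yields $\supp\mu\subset\overline{D}$.

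For part (i), fix $x\in\supp\mu$, $r\in(0,1)$ and $\eta\in(0,1-r)$, and pick $x_m\in\supp\mu_m$ with $x_m\to x$, so that $B(x,r)\subset B(x_m,r+\eta)$ for $m$ large. Then \eqref{E:upreg} gives $\mu_m(B(x,r))\leq c_d^A(r+\eta)^d$, and combining the open-set Portmanteau inequality with $\eta\downarrow 0$ yields \eqref{E:upreg} for $\mu$ on the range $r\in(0,1)$; the case $r=1$ follows by continuity from below of $\mu$. For the scaling part of (ii), the argument is dual: fix $x\in\supp\mu$ and $r\in(0,1]$, pick $x_m\to x$, and use $\overline{B(x_m,r-\eta)}\subset\overline{B(x,r)}$ together with \eqref{E:lowreg-w} and the closed-set Portmanteau inequality, followed by $\eta\downarrow 0$.

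For the Hausdorff convergence $\supp\mu_m\to\supp\mu$, I would establish the two one-sided inclusions separately. The inclusion $\supp\mu\subset(\supp\mu_m)_\alpha$ for large $m$ follows from a finite cover of the compact set $\supp\mu$ by balls $B(x_i,\alpha/2)$, $x_i\in\supp\mu$, $i=1,\dots,N$, combined with the approximating sequences $x_{i,m}\in\supp\mu_m$, $x_{i,m}\to x_i$. For the reverse inclusion $\supp\mu_m\subset(\supp\mu)_\alpha$, suppose by contradiction that there exist $\alpha>0$, a subsequence $(m_k)$, and $y_{m_k}\in\supp\mu_{m_k}$ with $d(y_{m_k},\supp\mu)>\alpha$; extracting a further subsequence with $y_{m_k}\to y\in\overline{D}$ gives $d(y,\supp\mu)\geq\alpha$, hence $\mu(\overline{B(y,\alpha/2)})=0$. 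On the other hand, \eqref{E:lowreg-w} at $y_{m_k}$ forces $\mu_{m_k}(\overline{B(y,\alpha/2)})\geq\bar c_s^A(\alpha/4)^s$ for large $k$, so the closed-set Portmanteau inequality gives $\mu(\overline{B(y,\alpha/2)})>0$, a contradiction.

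The step requiring most care is the ``support shift'': converting the hypothesis that the Ahlfors bound holds at points of $\supp\mu_m$ into the corresponding bound at points of $\supp\mu$. The Portmanteau step itself is routine once the correct ball inclusions are in place; the substance, and the reason the lower Ahlfors bound is crucial in part (ii), is that it prevents mass from drifting and so forces the supports to behave well under weak convergence.
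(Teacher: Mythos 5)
Your proof is correct, and its core mechanism -- shifting balls from a point $x\in\supp\mu$ to nearby points $x_m\in\supp\mu_m$, applying the Portmanteau inequalities for open and closed sets, and using the lower bound \eqref{E:lowreg-w} to rule out stray limit points of the supports -- is the same one the paper uses. The difference is organizational and concerns the Hausdorff convergence of the supports: the paper first extracts a subsequence of the $\supp\mu_m$ converging in the Hausdorff sense to some compact $K'$ via the compactness of the Hausdorff metric (\cite[Theorem 2.2.25]{HENROT-e}), observes $\supp\mu\subset K'$ by weak convergence, and then uses \eqref{E:lowreg-w} together with the Portmanteau bound along an approximating sequence $x_m\in\supp\mu_m$ to show $K'\setminus\supp\mu=\emptyset$, so that $K'=\supp\mu$ independently of the chosen subsequence. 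You bypass the compactness extraction entirely: you derive the approximating-sequence property directly from the open-set Portmanteau inequality (so no citation of the Hausdorff-limit characterization is needed), obtain the inclusion $\supp\mu\subset(\supp\mu_m)_\alpha$ from a finite cover of the compact support, and obtain the reverse inclusion by essentially the same contradiction the paper uses to kill $K'\setminus\supp\mu$. Your route is a bit more self-contained and avoids the subsequence bookkeeping; the paper's is shorter by leaning on the cited compactness results. Your explicit treatment of the endpoint $r=1$ in part (i) and of the restriction $r+\eta\le 1$ is a detail the paper leaves implicit.
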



\begin{proof} Denote $K:=\supp\mu$ and $K_m:=\supp\mu_m$. 
	By \cite[Theorem 2.2.25]{HENROT-e}, we can find a subsequence of $(K_m)_m$ with a limit $K'$ in the Hausdorff sense. 
	As can be seen from the proof, the subsequence choice does not matter, so we write again $(K_m)_m$ for this subsequence and $\mu_m$ for the measure with support $K_m$. By weak convergence, it is clear that $K\subset K'$. 
	Let $x\in K'$. Then there is a sequence $(x_m)_m$ of points $x_m\in K_m$ such that $\lim_m x_m=x$, see \cite[Proposition 2.2.27]{HENROT-e}. Given $0<\delta<r$, we have $B(x_m,r-\delta)\subset B(x,r)\subset B(x_m,r+\delta)$, and with the Portmanteau theorem it follows that $\mu(B(x,r))\leq \varliminf_m \mu_m(B(x,r))\leq \varlimsup_m \mu_m(B(x_m, r+\delta))$ and $\mu(\overline{B(x,r)})\geq \varlimsup_m \mu_m(\overline{B(x,r)})\geq \varliminf_m \mu_m(\overline{B(x_m,r-\delta)})$. The existence of some $x\in K'\setminus K$ would contradict ~\eqref{E:lowreg-w} and the last conclusion, thus $K'=K$. 	
\end{proof}

\subsection{Refined scaling conditions and their stability}\label{SS:refined} In \cite{JONSSON-1994} more refined scaling properties were key assumptions for trace and extension results for Besov spaces on   closed subsets of  $\mathbb{R}^n$. 
The conditions and results in \cite{JONSSON-1994} allow to treat measures  having non-integer Hausdorff dimensions,\cite{MattilaMoranRey}, and consisting of various parts having different Hausdorff dimensions. 
Global versions of  ~\eqref{EqMuDs} were studied in detail in \cite{VolbergKonyagin}, motivated by  \cite{Assouad80, Dyn'kin84}. Condition ~\eqref{EqMuLd} seems to have been introduced by Jonsson for the first time; see also \cite{BylundGudayol, LuukkainenSaksman}.

A Borel measure $\mu$ on $\mathbb{R}^n$ with support $K:=\supp \mu$ \emph{satisfies the $D_s$-condition} for an exponent $0<s\le n$ if there is a constant $c_s>0$ such that 
\begin{equation}\label{EqMuDs}
\mu(B(x,kr))\le c_s k^s \mu(B(x,r)), \quad x\in K, \quad r>0,\quad k\ge 1, \quad 0<k r\le 1.
\end{equation}
It is said to \emph{satisfy the $L_d$-condition} for an exponent $0\leq d\le n$ if for some constant $c_d>0$ we have 
\begin{equation}\label{EqMuLd}
\mu(B(x,kr))\ge c_d k^d \mu(B(x,r)), \quad x\in K, \quad r>0,\quad k\ge 1, \quad 0<k r\le 1.
\end{equation}
Apart from ~\eqref{EqMuDs} and ~\eqref{EqMuLd} we will also consider the condition
\begin{equation}\label{Eqnormalized}
c_1\leq \mu(B(x,1))\leq c_2,\quad x\in K,
\end{equation}
where $c_1>0$ and $c_2>0$ are constants independent of $x$.

\begin{remark}\label{R:compare} 
Combining~~\eqref{EqMuDs} and~~\eqref{Eqnormalized} one can find a constant $c_{s}^A>0$ such that 
 ~\eqref{E:lowreg} holds. Similarly  ~\eqref{EqMuLd} and ~\eqref{Eqnormalized} yield a constant $c_{d}^A>0$ such that 
 ~\eqref{E:upreg} holds. Moreover, ~\eqref{EqMuDs} implies the     doubling condition $\mu(B(x,2r))\leq c\:\mu(B(x,r))$, $x\in K$, $0<r\leq 1/2$, where $c>0$ is a suitable constant, \cite[Section 1]{JONSSON-1994}. If a Borel measure $\mu$ with support $K$ satisfies ~\eqref{E:lowreg} and ~\eqref{E:upreg} with $s=d$ for some $0<d\leq n$, then $\mu$ is called a \emph{$d$-measure} and $K$ is called a \emph{$d$-set}, see for instance \cite{JONSSON-1984,
JONSSON-1995,WALLIN-1991,TRIEBEL-1997}. The boundary of a Lipschitz domain, endowed with the $(n-1)$-dimensional Hausdorff measure $\mathcal{H}^{n-1}$, is an $(n-1)$-set. 
\end{remark}

\begin{remark}\label{R:triviallowerbound}\mbox{}
\begin{enumerate}
\item[(i)] If the closed set $K$ is the union of two closed sets $K_1$ and $K_2$ supporting measures $\mu_1$ and $\mu_2$ that meet conditions ~\eqref{EqMuDs}, ~\eqref{EqMuLd} and ~\eqref{Eqnormalized}, possibly with different constants and exponents, then the measure $\mu=\mu_1+\mu_2$ on $K$ satisfies ~\eqref{EqMuDs}, ~\eqref{EqMuLd} and ~\eqref{Eqnormalized} 
with readjusted constants, see \cite[Example 2]{JONSSON-1994}. 
\item[(ii)] If $\mu$ is a Borel measure on $\mathbb{R}^n$ whose support $K=\supp \mu$ is compact, then we can always find a constant 
$c_1>0$ so that the lower bound in ~\eqref{Eqnormalized} is satisfied: By Fatou's lemma the function $x\mapsto \mu(B(x,1))$ is lower semicontinous on $K$, hence $c_1:=\min_{x\in K} \mu(B(x,1))$ is attained at some $x_0\in K$. But this minimum must be strictly positive, otherwise $\mu(B(x_0,1))=0$, a contradiction.  
\end{enumerate}
\end{remark}


For the next lemma we need to introduce estimates \begin{equation}\label{Eqmeasurebounds}
\overline{c}_1\leq \mu(\overline{B(x,1)})  \quad \text{ and } \quad   \mu(B(x,1))\leq c_2, \quad x\in K. 
\end{equation}

\begin{remark}\label{R:modlowerbound}
It is easy to see that ~\eqref{EqMuDs} implies $\mu(\overline{B(x,r)})\leq c_s  \mu({B(x,r)})$,  $x\in K$, $r>0$. 
Therefore, if a Borel measure $\mu$ satisfies ~\eqref{EqMuDs} and the first inequality in ~\eqref{Eqmeasurebounds}, then it satisfies the lower bound in ~\eqref{Eqnormalized} with $\overline{c}_1 /c_s$ in place of $c_1$.
\end{remark}

 \begin{proposition}\label{measure-lemma} Suppose that $D\subset \mathbb{R}^n$ is a bounded open set and $\mu_m$ are Borel measure with $\supp\mu_m\subset \overline{D}$
and satisfying 
 ~\eqref{EqMuDs},
~\eqref{EqMuLd} 
  and
 ~\eqref{Eqmeasurebounds}.
If $\mu_m \to \mu$ weakly, then $\supp \mu_m\to \supp\mu$ in the Hausdorff sense, and $\mu$ satisfies  ~\eqref{EqMuDs},
~\eqref{EqMuLd} 
  and
 ~\eqref{Eqmeasurebounds}.
\end{proposition}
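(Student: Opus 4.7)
\textbf{Reduction to Proposition~\ref{prop-A}.} I first deduce the hypotheses of Proposition~\ref{prop-A} from those of the statement. By Remark~\ref{R:modlowerbound}, \eqref{EqMuDs} combined with the first inequality of \eqref{Eqmeasurebounds} gives the lower bound of \eqref{Eqnormalized} with constant $\overline{c}_1/c_s$, and the upper bound of \eqref{Eqmeasurebounds} coincides with the upper bound of \eqref{Eqnormalized}; so \eqref{Eqnormalized} holds uniformly for $(\mu_m)_m$. Remark~\ref{R:compare} then supplies uniform Ahlfors bounds \eqref{E:lowreg} and \eqref{E:upreg} for the $\mu_m$, and \eqref{E:lowreg-w} follows at once from $\mu_m(\overline{B})\geq \mu_m(B)$. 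Proposition~\ref{prop-A} now yields simultaneously the Hausdorff convergence $\supp\mu_m\to\supp\mu$, together with \eqref{E:upreg} and \eqref{E:lowreg-w} for $\mu$; setting $r=1$ in these two inequalities immediately provides both inequalities of \eqref{Eqmeasurebounds} for $\mu$.

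\textbf{$D_s$ for the limit.} To transfer \eqref{EqMuDs} to $\mu$, fix $x\in K:=\supp\mu$, $k\geq 1$, $r>0$ with $kr\leq 1$, and by the Hausdorff convergence and~\cite[Proposition 2.2.27]{HENROT-e} pick $x_m\in K_m:=\supp\mu_m$ with $x_m\to x$. The key geometric step is that for small auxiliary parameters $\eta,\delta>0$ and $m$ large with $|x_m-x|<\delta$, one has the sandwich inclusions $B(x,kr)\subset B(x_m,kr+\delta)$ and $B(x_m,r/(1+\eta)-\delta)\subset B(x,r/(1+\eta))\subset B(x,r)$. Applying \eqref{EqMuDs} for $\mu_m$ at base radius $r/(1+\eta)-\delta$ and dilation factor $(kr+\delta)/(r/(1+\eta)-\delta)$ (which is $\geq 1$ and whose product with the base radius is $kr+\delta\leq 1$ for $\delta$ small) then gives $\mu_m(B(x,kr))\leq C_{\eta,\delta}\,\mu_m(B(x,r))$ with $C_{\eta,\delta}\to c_s k^s$ as $\delta\to 0$ and then $\eta\to 0$. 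Passing to $\liminf_m$ and using the open-ball direction of Portmanteau on the LHS and the closed-ball direction on $\overline{B(x,r/(1+\eta))}\subset B(x,r)$ on the RHS yields $\mu(B(x,kr))\leq C_{\eta,\delta}\,\mu(B(x,r))$, and the successive limits produce \eqref{EqMuDs} for $\mu$.

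\textbf{$L_d$ for the limit and main obstacle.} The argument for \eqref{EqMuLd} is dual: with $\tilde k:=k(1-\zeta)$ for small $\zeta>0$ (trivial if $k=1$) and $\delta\in(0,k\zeta r)$, the inclusions $B(x_m,\tilde kr)\subset B(x,\tilde kr+\delta)$ and $B(x,r-\delta)\subset B(x_m,r)$ together with \eqref{EqMuLd} for $\mu_m$ give $\mu_m(B(x,\tilde kr+\delta))\geq c_d\,\tilde k^{d}\,\mu_m(B(x,r-\delta))$. Passing to $\limsup_m$, combined with the closed-ball Portmanteau applied to $\overline{B(x,\tilde kr+\delta)}\subset B(x,kr)$ on the LHS and the open-ball Portmanteau on $B(x,r-\delta)$ on the RHS, yields $\mu(B(x,kr))\geq c_d\,\tilde k^{d}\,\mu(B(x,r-\delta))$; letting $\delta\to 0$ and then $\zeta\to 0$ produces \eqref{EqMuLd}. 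The main technical obstacle is that the Portmanteau theorem only relates $\mu$ to $\mu_m$ through open-ball lower semicontinuity or closed-ball upper semicontinuity, so the limits naturally produce inequalities involving $\mu(\overline{B(x,\cdot)})$; the careful choice of the auxiliary scalings $k\mapsto k(1\pm\eta)$ together with the left-continuity of $r\mapsto\mu(B(x,r))$ is precisely the device that restores open balls on both sides of the final inequalities, as required by \eqref{EqMuDs} and \eqref{EqMuLd}.
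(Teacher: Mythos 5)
Your proposal is correct and follows essentially the same route as the paper: the support convergence and the bounds \eqref{Eqmeasurebounds} are obtained by reduction to Proposition~\ref{prop-A} (via Remarks~\ref{R:compare} and~\ref{R:modlowerbound}), and \eqref{EqMuDs}, \eqref{EqMuLd} are transferred to the limit by applying the Portmanteau theorem to slightly inflated and deflated balls centered at approximating points $x_m\in\supp\mu_m$ and letting the perturbation parameters tend to zero. The only differences are cosmetic (you use two auxiliary parameters where the paper uses a single $\delta$), and your explicit tracking of which Portmanteau direction applies to which side is a faithful elaboration of the paper's displayed inequality \eqref{E:Portmanteau}.
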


\begin{proof} Since by Remark~\ref{R:compare} we have ~\eqref{E:lowreg}, the convergence of supports follows from Proposition~\ref{prop-A} (ii). By the Portmanteau theorem 
	\begin{equation}\label{E:Portmanteau}
	\sup_{\delta\in(0,r)}\varlimsup\limits_{ m\to\infty}\mu_m (B(x,r-\delta))\leq \mu(B(x,r)) \leq \varliminf\limits_{ m\to\infty}\mu_m (B(x,r))
	\end{equation}
	holds for any $x\in\mathbb{R}^n$ and $0<\delta<r$. If  $x\in \supp \mu$, then there is a sequence $(x_m)_m$ of points $x_m\in \supp \mu_m$ such that $\lim_m x_m=x$. Given $0<\delta<r$ we have $B(x,kr)\subset B(x_m,k(r+\delta))$ for any sufficiently large $m$ and therefore, using  ~\eqref{E:Portmanteau} and applying ~\eqref{EqMuDs} with $k(r+\delta)=(k(r+\delta)/(r-\delta))(r-\delta)$,
\begin{multline}
		\mu(B(x,kr))\le\varliminf\limits_{m\to\infty}\mu_m(B(x_m,k(r+\delta)))\notag\\
		\le c_s \left(\frac{k(r+\delta)}{r-\delta}\right)^s\varliminf\limits_{m\to\infty}\mu_m(B(x_m, r-\delta )) \le c_s \left(\frac{k(r+\delta)}{r-\delta}\right)^s \mu(B(x,r)),
		\end{multline}
which proves ~\eqref{EqMuDs},  because $\delta$ can be arbitrarily small.  Estimate ~\eqref{EqMuLd}   follows similarly using ~\eqref{E:Portmanteau} and $k(r-\delta)=(k(r-\delta)/(r+\delta))(r+\delta)$, note that 
		\begin{multline}\label{EqMuLd-}
		\mu(B(x,kr))\ge\varliminf\limits_{m\to\infty}\mu_m(B(x_m,k(r-\delta)))\notag\\
		\ge c_d \left(\frac{k(r-\delta)}{r+\delta}\right)^d\varliminf\limits_{m\to\infty}\mu_m(B(x_m, r+\delta )) \ge c_d \left(\frac{k(r-\delta)}{r+\delta}\right)^d \mu(B(x,r)).
\end{multline}	
The estimates ~\eqref{Eqmeasurebounds} and follows as in Proposition~\ref{prop-A}. 
\end{proof}

\section{Convergence of sequences of domains, stability, and compactness}\label{S:domains}

We consider the convergence properties of certain classes of bounded domains. A first compactness result is~Theorem~\ref{T:nothing_extra} (i), which concludes the existence of subsequential limits in the sense of weak convergence of measures on the boundary, in the Hausdorff sense and in the characteristic function sense for domains confined to a bounded open set. Similarly, as in \cite{HENROT-e} the assumption of convergence in the sense of compacts can prevent the limit measure from having support larger than the boundary of the (subsequential) limit domain, Theorem~\ref{T:nothing_extra} (ii). 
It allows a refined compactness result, including convergence in the sense of compacts,~Theorem~\ref{T:compact}, for certain classes of domains with fixed quantitative specifications,~Definition~\ref{DefShapeAdmis}.

\subsection{Sequences of domains and boundaries}


 A sequence $(\Omega_m)_m$ of open sets $\Omega\subset \mathbb{R}^n$ is said to \emph{converge} to an open $\Omega$ \emph{in the sense of characteristic functions} if 
\[\lim_{m\to \infty} \mathds{1}_{\Omega_m}=\mathds{1}_{\Omega}\quad \text{in $L^p_{\loc}(\mathbb{R}^n)$ for all $p\in [1,\infty)$},\]
\cite[Definition 2.2.3]{HENROT-e}. A sequence $(\Omega_m)_m$ of open sets $\Omega_m\subset \mathbb{R}^n$ is said to \emph{converge} to an open set $\Omega\subset \mathbb{R}^n$ \emph{in the sense of compacts} if for any compact $L\subset \Omega$ we have $L\subset \Omega_m$ for all sufficiently large $m$ and for any compact $L'\subset \mathbb{R}^n\setminus \overline{\Omega}$ we have $L'\subset \mathbb{R}^n\setminus \overline{\Omega}_m$ for all sufficiently large $m$.

	To a finite Borel measure $\mu$ whose support is the boundary $\partial\Omega$ of a bounded domain $\Omega\subset \mathbb{R}^n$, $\supp\mu=\partial\Omega$, we refer as a \emph{boundary volume} for $\Omega$. The topological dimension of $\partial\Omega$ is $n-1$. (All well-established concepts of topological dimension agree for separable metric spaces, see \cite[Chapter 3, p. 110]{Edgar2008} and the references cited there or \cite[Chapter V]{HurewiczWallman1941}.) Since the Hausdorff dimension of a set is greater or equal to its topological dimension, see \cite[Theorem 6.3.10]{Edgar2008} or \cite[Theorem VII.2]{HurewiczWallman1941}, we therefore have $n-1\leq \dim_H \partial\Omega\leq n$. It follows that if a boundary volume satisfies (\ref{E:lowreg}), then the exponent $s$ in (\ref{E:lowreg}) must satisfy $n-1\leq  s\leq n$.
 
\begin{theorem}\label{T:nothing_extra}
Let $D\subset \mathbb{R}^n$ be bounded and open and $n-1\leq  s<n$. Suppose that $(\Omega_m)_m$ is a sequence of domains $\Omega_m\subset D$ and $(\mu_m)_m$ is a sequence of boundary volumes $\mu_m$ for the domains $\Omega_m$, respectively, which satisfy 
 ~\eqref{E:lowreg-w}
with $\supp\mu_m=\partial \Omega_m$. 
\begin{enumerate}
\item[(i)] If $\sup_m\mu_m(\overline{D})<+\infty$, then there are a sequence $(m_k)_k$ of indexes and an open set $\Omega\subset D$ such that the sequence $(\mu_{m_k})_k$ converges weakly   to a  Borel measure $\mu$   satisfying   ~\eqref{E:lowreg-w} with $K=\supp\mu$, and we have $\partial \Omega\subset K\subset \overline{D}\setminus \Omega$. The sequences $(\Omega_{m_k})_k$ and $(D\setminus \overline{\Omega}_{m_k})_k$ converge to $\Omega$ and $D\setminus (\Omega\cup K)$, respectively, in   the Hausdorff sense and the sense of characteristic functions.
\item[(ii)] If $(\Omega_m)_m$ converges to $\Omega$ in the Hausdorff sense and in the sense of compacts and $(\mu_m)_m$ converges weakly to a Borel measure $\mu$, then $\supp \mu=\partial\Omega$ and $\mu$ satisfies   ~\eqref{E:lowreg-w}.
\end{enumerate}
\end{theorem}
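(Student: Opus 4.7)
The plan combines three compactness principles: weak sequential compactness of uniformly bounded Borel measures supported in the compact set $\overline{D}$ (via Prokhorov); the Hausdorff compactness of open subsets of $D$ from \cite[Corollary 2.2.26]{HENROT-e}; and the Blaschke compactness of closed subsets of $\overline{D}$ in the Hausdorff metric. Applied to part (i), these yield (after passing to a common subsequence, still denoted $(m_k)_k$) a limit measure $\mu$, an open set $\Omega\subset D$ with $\Omega_{m_k}\to\Omega$ in the Hausdorff sense, and a further closed limit for $\overline{\Omega}_{m_k}$. Proposition~\ref{prop-A}(ii) immediately provides that $\mu$ satisfies~\eqref{E:lowreg-w} and that $\supp\mu_{m_k}=\partial\Omega_{m_k}\to K:=\supp\mu$ in the Hausdorff sense.

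The inclusion $K\subset\overline{D}\setminus\Omega$ is immediate from $\partial\Omega_{m_k}\subset\overline{D}\setminus\Omega_{m_k}$ and the fact that Hausdorff limits respect inclusions of closed sets. For the more delicate $\partial\Omega\subset K$, I argue by contradiction: given $x\in\partial\Omega$ with $x\notin K$, choose $r\in(0,d(x,K)/2)$ so that $B(x,r)\cap\partial\Omega_{m_k}=\emptyset$ for all large $k$. The trichotomy $\mathbb{R}^n=\Omega_{m_k}\sqcup\partial\Omega_{m_k}\sqcup(\mathbb{R}^n\setminus\overline{\Omega}_{m_k})$ combined with the connectedness of $B(x,r)$ forces $B(x,r)$ to lie entirely in $\Omega_{m_k}$ or entirely in $\mathbb{R}^n\setminus\overline{\Omega}_{m_k}$. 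Since $x\in\overline{\Omega}$, any $z\in\Omega\cap B(x,r/2)$ satisfies $d(z,\overline{D}\setminus\Omega)>0$, and the Hausdorff convergence of the complements gives $z\in\Omega_{m_k}$ eventually, ruling out the second alternative. But $x\notin\Omega$ produces $y_k\in\overline{D}\setminus\Omega_{m_k}$ with $y_k\to x$, ruling out the first alternative.

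The same device identifies the Hausdorff limit of $\overline{\Omega}_{m_k}$ as $\Omega\cup K$, which is closed since $\overline{\Omega}=\Omega\cup\partial\Omega\subset\Omega\cup K$. Indeed, $\Omega\subset\lim\overline{\Omega}_{m_k}$ and $K\subset\lim\overline{\Omega}_{m_k}$ are routine, and for any $x\in\lim\overline{\Omega}_{m_k}$ with $x\notin K$, the trichotomy--plus--connectedness argument forces $B(x,r)\subset\Omega_{m_k}$ eventually, hence $d(x,\overline{D}\setminus\Omega_{m_k})\geq r/2$ persists through the Hausdorff limit to give $d(x,\overline{D}\setminus\Omega)\geq r/2$; combined with $x\in\overline{D}$ this places $x\in\Omega$. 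Uniqueness of subsequential Hausdorff limits then upgrades this to full Hausdorff convergence of $\overline{\Omega}_{m_k}$, which via complementation yields the Hausdorff convergence $D\setminus\overline{\Omega}_{m_k}\to D\setminus(\Omega\cup K)$. For characteristic functions, Proposition~\ref{C:Kisnullset} gives $\lambda^n(K)=0$ because $s<n$; outside this null set one checks pointwise that $\mathds{1}_{\Omega_{m_k}}\to\mathds{1}_\Omega$ (points of $\Omega$ are captured by $\Omega_{m_k}$ eventually, points of $\mathbb{R}^n\setminus(\Omega\cup K)$ lie at positive distance from $\overline{\Omega}_{m_k}$ eventually), and dominated convergence yields the $L^p_{\loc}$ statement; an analogous pointwise analysis handles $D\setminus\overline{\Omega}_{m_k}$.

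Part (ii) is a streamlined version: Proposition~\ref{prop-A}(ii) directly provides~\eqref{E:lowreg-w} for $\mu$ and $\partial\Omega_m\to\supp\mu$ in the Hausdorff sense, and the inclusion $\partial\Omega\subset\supp\mu$ is precisely the contradiction argument of the second paragraph, which never used convergence in the sense of compacts. The reverse $\supp\mu\subset\partial\Omega$ is where the sense-of-compacts hypothesis is indispensable: for $x\in\supp\mu$ with $x_m\to x$, $x_m\in\partial\Omega_m$, if $x\in\mathbb{R}^n\setminus\overline{\Omega}$ a small closed ball around $x$ lies in $\mathbb{R}^n\setminus\overline{\Omega}$ and hence in $\mathbb{R}^n\setminus\overline{\Omega}_m$ eventually, contradicting $x_m\in\partial\Omega_m$; the case $x\in\Omega$ is ruled out by an analogous compact-containment argument. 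The main obstacle is the identification $\lim\overline{\Omega}_{m_k}=\Omega\cup K$ in part (i), since Hausdorff convergence of open sets does not in general pass to their closures; the key tool is the trichotomy--plus--connectedness argument exploiting that $\partial\Omega_m$ is "thin" in the sense captured by~\eqref{E:lowreg-w} with $s<n$.
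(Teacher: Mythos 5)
Your proof is correct, and while the skeleton (compactness of measures and of sets, Proposition~\ref{prop-A} for the scaling condition and the convergence of supports, and the contradiction argument for $\supp\mu=\partial\Omega$ in part (ii)) coincides with the paper's, you take a genuinely different route through the two most delicate steps of part (i). Where the paper extracts the simultaneous Hausdorff limits of $\Omega_{m_k}$ and $D\setminus\overline{\Omega}_{m_k}$ by an Arzela--Ascoli argument on the pair of distance functions $(d_{\overline{D}\setminus\Omega_m}, d_{\overline{\Omega}_m})$ and then proves the convergence of characteristic functions via weak$^\ast$ compactness of $(\mathds{1}_{\Omega_m})_m$ in $L^\infty(D)$ together with a covering argument establishing \eqref{E:claimintzero} on the far region $A_\alpha$, you instead identify the Blaschke limit of the closures, $\lim_k\overline{\Omega}_{m_k}=\Omega\cup K$, by your trichotomy-plus-connectedness device (a ball missing $\partial\Omega_{m_k}$ must lie entirely in $\Omega_{m_k}$ or entirely in its exterior), and then obtain $\mathds{1}_{\Omega_{m_k}}\to\mathds{1}_\Omega$ pointwise off the Lebesgue-null set $K$ and conclude by dominated convergence. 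The same device gives you a self-contained proof of $\partial\Omega\subset K$, which the paper delegates to \cite[Proposition 2.2.16]{HENROT-e}. Both arguments hinge on exactly the same geometric input, namely $\lambda^n(K)=0$ from Corollary~\ref{C:Kisnullset} (hence on $s<n$); your version buys a more elementary and more transparent endgame (pointwise a.e.\ convergence rather than a duality argument), at the price of having to control the closures $\overline{\Omega}_{m_k}$ directly --- a step you handle correctly by noting that $\Omega\cup K$ is closed because $\partial\Omega\subset K$. Your part (ii) is essentially the paper's argument.
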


\begin{proof}
We use the notation $d_K(x):=d(x,K)$ for closed $K\subset \mathbb{R}^n$. To see the subequential Hausdorff convergence of domains we follow \cite[Theorem 2.2.25 and Corollary 2.2.26]{HENROT-e}. 
The $f_m=(f_m^{(1)},f_m^{(2)}):\overline{D}\to \mathbb{R}_+\times \mathbb{R}_+$, defined by $f_m(x)=(d_{\overline{D}\setminus \Omega_m}(x), d_{\overline{\Omega}_m}(x))$, form an equibounded sequence $(f_m)_m$, and since 
\begin{equation}\label{E:AAequicont}
|d_{\overline{D}\setminus \Omega_m}(x)-d_{\overline{D}\setminus \Omega_m}(y)|\leq d(x,y)\quad \text{ and }\quad 
|d_{\overline{\Omega}_m}(x)-d_{\overline{\Omega}_m}(y)|\leq d(x,y)
\end{equation}
for all $x,y\in \overline{D}$, it is also equicontinous. By Arzela-Ascoli we can find a sequence $(m_k)_k$ so that $(f_{m_k})_k$ converges to a continuous function $f=(f^{(1)},f^{(2)}):\overline{D}\to \mathbb{R}_+\times \mathbb{R}_+$. We claim that writing $\Omega:=\{f^{(1)}>0\}$ and $\Omega':=\{f^{(2)}>0\}$ we have $f=(d_{\overline{D}\setminus \Omega}, d_{\overline{D}\setminus \Omega'})$. To see this, note first that $\overline{D}\setminus \Omega = \{f^{(1)}=0\}$ and taking limits in ~\eqref{E:AAequicont} yields $f^{(1)}\leq d_{\overline{D}\setminus \Omega}$. Given $x\in \overline{D}$ let $x_m\in \overline{D}\setminus \Omega_m$ be such that $d_{\overline{D}\setminus \Omega_m}(x)=d(x,x_m)$. By the compactness of $\overline{D}$ (and passing to further subsequences if needed) we can find a sequence $x_{m_k}$ with limit $y\in\overline{D}$ so that $f^{(1)}(x)=\lim_k f^{(1)}_{m_k}(x)=\lim_k d(x,x_{m_k})=d(x,y)$. Since $f^{(1)}(y)\leq \lim_k d(y,x_{m_k})=0$ we have $y\in \overline{D}\setminus \Omega$ and therefore, and since $x$ was arbitrary, $f^{(1)}\geq d_{\overline{D}\setminus \Omega}$. Similarly we can see that $f^{(2)}= d_{\overline{D}\setminus \Omega'}$. Now it follows from \cite[Proposition 2.2.27]{HENROT-e} that $\Omega_{m_k}\to \Omega$ and $D\setminus \overline{\Omega}_{m_k}\to \Omega'$ in the Hausdorff sense. Since $K:=\lim_{k\to \infty} \partial\Omega_{m_k}$ equals $\{f=0\}$ and $\{f^{(1)}>0, f^{(2)}>0\}=\emptyset$, it follows that $\Omega'=D\setminus (\Omega\cup K)$.

By the Banach-Alaoglu theorem we may, passing to a further subsequence, assume that the boundary volumes converge weakly to a  Borel measure $\mu$. By Proposition~\ref{prop-A} its support $K=\supp\mu$ is the limit in the Hausdorff sense of the boundaries $\partial\Omega_{m_k}$, and $\mu$ satisfies  ~\eqref{E:lowreg-w}. By \cite[Proposition 2.2.16, and the remarks following it]{HENROT-e} we have $\partial\Omega\subset K$. 
By definition the open sets $D\setminus \partial\Omega_{m_k}$ converge in the Hausdorff sense to $D\setminus K$, and clearly $\Omega_{m_k}\subset D\setminus \partial\Omega_{m_k}$ for all $k$. According to \cite[(2.16) in 2.2.3.2]{HENROT-e} it follows that $\Omega\subset  D\setminus K$, hence $K\subset \overline{D}\setminus \Omega$.

For simplicity we relabel and denote the chosen subsequence again by $(\Omega_m)_m$, and by Banach-Alaoglu we may assume that $(\mathds{1}_{\Omega_m})_m$ converges weakly$^\ast$ in $L^\infty(D)$ to a function $\chi$. Clearly $\chi\leq \mathds{1}_D$ $\lambda^n$-a.e. and \cite[Proposition 2.2.23]{HENROT-e} yields $\mathds{1}_\Omega\leq \chi$ $\lambda^n$-a.e. To show that also $\chi\leq \mathds{1}_\Omega$ $\lambda^n$-a.e. suppose that $\delta>0$. For sufficiently small $\alpha>0$ we have $\lambda^n((K)_\alpha)<\delta$ by Corollary~\ref{C:Kisnullset}, and in particular, $\int_{(\overline{\Omega})_\alpha\setminus \overline{\Omega}}\chi dx<\delta$. We claim that if $y\in D$ has distance $d(y,\overline{\Omega})>\alpha$ from $\overline{\Omega}$, then for any ball $B(y,r)$ with $r<\alpha/2$ we have 
\begin{equation}\label{E:claimintzero}
\int_{B(y,r)\cap D}\chi dx=0.
\end{equation}
If this is true, then covering the closure of $A_\alpha:=\{x\in\overline{D}: d(x,\overline{\Omega})>\alpha\}$ by finitely many such balls we can deduce that $\chi=0$ $\lambda^n$-a.e. on $A_\alpha$. Since $\lambda ^n(\partial\Omega)\leq \lambda ^n(K)=0$ by the preceding and Corollary~\ref{C:Kisnullset}, we obtain
\[\int_{\Omega^c}\chi dx=\int_{\partial\Omega}\chi dx+\int_{(\overline{\Omega})_\alpha\setminus \overline{\Omega}}\chi dx+\int_{A_\alpha}\chi dx<\delta,\]
and since $\delta$ was arbitrary, $\chi=0$ $\lambda^n$-a.e. on $\Omega^c$. This shows that $\chi=\mathds{1}_\Omega$, so that by \cite[Proposition 2.2.1]{HENROT-e} we have $\Omega_m\to \Omega$ in the sense of characteristic functions. To verify ~\eqref{E:claimintzero} let $y$ and $r$ be as there and suppose there exists some $\gamma>0$ such that $\int_{B(y,r)\cap D}\chi dx>2\gamma$. Choose $\beta>0$ so that $\lambda^n((K)_{2\beta})<\gamma$. By the weak$^\ast$ convergence in $L^\infty(D)$ we have 
\[\lim_m \lambda^n (B(y,r)\cap \Omega_m)=\int_D \mathds{1}_{B(y,r)}\mathds{1}_{\Omega_m} dx=\int_D\mathds{1}_{B(y,r)}\chi dx,\]
hence $ \lambda^n (B(y,r)\cap \Omega_m)>\gamma$ for all large enough $m$. Since $\overline{D}\setminus \Omega_m\to \overline{D}\setminus \Omega$ in the Hausdorff sense, it holds that $\overline{D\cap B(y,r)}\subset (\overline{D}\setminus \Omega_m)_\beta$ for all large enough $m$, hence $B(y,r)\cap \Omega_m\subset \{x\in \Omega_m: d(x,\overline{D}\setminus\Omega_m)\leq \beta\}$ and therefore, since $\partial\Omega_m\to K$, 
\[\lambda^n (B(y,r)\cap \Omega_m)\leq \lambda^n((\partial\Omega_m)_\beta)\leq \lambda^n((K)_{2\beta})<\gamma\]
for all large enough $m$, what contradicts the preceding. Consequently ~\eqref{E:claimintzero} holds. The convergence $D\setminus \overline{\Omega}_m \to D\setminus (\Omega\cup K)$ in the sense of characteristic functions is an immediate consequence, and this completes the proof of (i).

To see (ii) note that since by \cite[Proposition 2.2.16 and the remarks following it]{HENROT-e} we have $\partial\Omega\subset K$, it suffices to prove $K\subset \partial\Omega$. Suppose that there is a point $x\in K\setminus \partial\Omega$. Then $x$ must be in $\Omega$ or in $\overline{D}\setminus\overline{\Omega}$, and in either case we could find a small ball $B_x$ around $x$ whose closure $\overline{B}_x$ is contained in $\Omega$ or in $\mathbb{R}^n\setminus \overline{\Omega}$. By \cite[Proposition 2.2.17]{HENROT-e} and by the convergence in the sense of compacts, this implies $\overline{B}_x\subset \Omega_m$ for all sufficiently large $m$ in the first case and $\overline{B}_x\subset \mathbb{R}^n\setminus \overline{\Omega}_m$ in the second. Both cases contradict the fact that $x$ is the limit of a sequence of points $x_m\in \partial\Omega_m$. 
\end{proof}

\subsection{Shape admissible domains and compactness}\label{SubsecShapeAdm}

We prove a compactness result for suitable classes of domains with a quantitative control of the geometry of the domain and its boundary. 
This  
generalizes \cite[Theorem 2.4.10]{HENROT-e} and in part follows its ideas. 
Recall that we assume $n\geq 2$ throughout. 

\begin{definition}\label{DefShapeAdmis}
Let $D_0\subset D \subset \mathbb{R}^n$ be non-empty  bounded Lipschitz  
domains. A pair $(\Omega,\mu)$ is called a \emph{shape admissible domain} with parameters $D$, $D_0$, $\varepsilon>0$, $n-1\leq s<n$, $0\leq d\leq s$,   $\bar c_{s}^A>0$, $c_{d}^A>0$ if $\Omega$ is an $(\varepsilon,\infty)$-domain such that $D_0\subset  \Omega\subset D$ and $\mu$ is a boundary volume for $\Omega$ satisfying ~\eqref{E:upreg} and ~\eqref{E:lowreg-w} with $\partial \Omega$ in place of $K$. The set of such domains is denoted by $U_{ad}(D,D_0,\varepsilon, s, d, \bar c_{s}^A,c_{d}^A)$.
\end{definition}
Note that a pair $(\Omega,\mu)$ can be called a \emph{Jonsson shape admissible domain} with parameters $D$, $D_0$, $\varepsilon$, $s$, $d$, $c_s$, $c_d$, $\bar c_1$, $c_2$ if $D_0\subset  \Omega\subset D$ is a bounded $(\varepsilon,\infty)$-domain and $\mu$ is a boundary volume for $\Omega$ satisfying ~\eqref{EqMuDs}, ~\eqref{EqMuLd} and ~\eqref{Eqmeasurebounds} with $\partial \Omega$ in place of $K$.
The set of such domains is denoted by $U_{ad}^J(D,D_0,\varepsilon, s, d, c_s, c_d, \bar c_1, c_2)$. It is clear that this set is a closed, and hence compact, subset of $U_{ad}(D,D_0,\varepsilon, s, d, \bar c_{s}^A,c_{d}^A)$ in the sense of the following theorem. 

\begin{theorem}\label{T:compact}Suppose that the parameters are fixed in Definition~\ref{DefShapeAdmis}. 
\begin{enumerate}
\item[(i)] The class $U_{ad}(D,D_0,\varepsilon, s, d, \bar c_{s}^A,c_{d}^A)$ of admissible domains is  compact   in the Hausdorff sense, in the sense of characteristic functions, in the sense of compacts, and in the sense of weak convergence of the boundary volumes. 
\item[(ii)] If  for a sequence $(\Omega_m)_{m }$  of shape admissible domains  the boundary volumes 
converge weakly,
then  $(\Omega_m)_{m}$ converges  in the Hausdorff sense, in the sense of characteristic functions, and in the sense of compacts.  
\end{enumerate}
\end{theorem}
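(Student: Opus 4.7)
The plan is to imitate the strategy of \cite[Theorem 2.4.10]{HENROT-e}, replacing the $\varepsilon$-cone argument available for Lipschitz domains by a cigar argument based on Lemma~\ref{L:cigar}. For part (i), I would fix a sequence $(\Omega_m,\mu_m)$ in $U_{ad}(D,D_0,\varepsilon,s,d,\bar c_s^A,c_d^A)$ and extract a subsequence along which all four convergences hold. Covering $\overline D$ by finitely many unit balls and using~\eqref{E:upreg} yields $\sup_m \mu_m(\overline D)<\infty$, so Banach--Alaoglu provides a subsequence $(\mu_m)$ converging weakly to a finite Borel measure $\mu$ supported in $\overline D$. Theorem~\ref{T:epsinftystable} allows a further extraction so that $\Omega_m\to\Omega$ in the Hausdorff sense for some $(\varepsilon,\infty)$-domain $\Omega\subset D$. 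Theorem~\ref{T:nothing_extra}~(i) then provides convergence in the sense of characteristic functions together with $\partial\Omega\subset K\subset\overline D\setminus\Omega$ for $K:=\supp\mu$, while Proposition~\ref{prop-A} shows that $\mu$ inherits both~\eqref{E:upreg} and~\eqref{E:lowreg-w} and that $\partial\Omega_m\to K$ in the Hausdorff sense. The inclusion $D_0\subset\Omega$ follows from the closed-set inclusion $\overline D\setminus\Omega_m\subset\overline D\setminus D_0$, valid for each $m$ and preserved under Hausdorff limits.

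The main obstacle is the identification $K=\partial\Omega$, since Theorem~\ref{T:nothing_extra}~(i) by itself leaves open the possibility of stray support points in $\overline D\setminus\overline\Omega$. To exclude them I would argue by contradiction: assume $x\in K\setminus\partial\Omega$. The inclusion $K\subset\overline D\setminus\Omega$ forces $x\in\overline D\setminus\overline\Omega$, and in particular $x\notin D_0$, so one can fix $y_0\in D_0$ with $y_0\neq x$. Using $\partial\Omega_m\to K$ in the Hausdorff sense, pick $z_m\in\partial\Omega_m$ with $z_m\to x$; since $z_m\in\overline{\Omega_m}$, a diagonal choice produces $y_m\in\Omega_m$ with $y_m\to x$. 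For each sufficiently large $m$ the $(\varepsilon,\infty)$-property supplies a rectifiable curve $\gamma_m\subset\Omega_m$ connecting $y_0$ to $y_m$ with $\ell(\gamma_m)\leq|y_0-y_m|/\varepsilon$ and $C(\gamma_m,\varepsilon)\subset\Omega_m$. Lemma~\ref{L:cigar}, together with the inclusion argument used in the proof of Theorem~\ref{T:epsinftystable}, then produces, after extracting a further subsequence, a rectifiable curve $\gamma$ joining $y_0$ to $x$ whose cigar $C(\gamma,\varepsilon)$ is contained in $\Omega$. Interior points of $\gamma$ then lie in $\Omega$, so $x\in\overline\Omega$, contradicting $x\in\overline D\setminus\overline\Omega$. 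Hence $K=\partial\Omega$.

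With $K=\partial\Omega$ established, Theorem~\ref{T:nothing_extra}~(i) also gives that $D\setminus\overline{\Omega_m}$ converges in the Hausdorff sense to $D\setminus\overline\Omega$, which combined with $\Omega_m\to\Omega$ produces convergence in the sense of compacts (the portion of any test compactum lying outside $\overline D$ is trivially disjoint from $\overline{\Omega_m}\subset\overline D$, and the portion inside is handled by the two Hausdorff convergences via \cite[Proposition 2.2.17]{HENROT-e}). The limit $(\Omega,\mu)$ satisfies every clause of Definition~\ref{DefShapeAdmis} with the original parameters: $\Omega$ is an $(\varepsilon,\infty)$-domain with $D_0\subset\Omega\subset D$, and $\mu$ is a boundary volume for $\Omega$ satisfying~\eqref{E:upreg} and~\eqref{E:lowreg-w}. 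This finishes part (i).

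Part (ii) reduces to part (i) by a subsequence-of-subsequence argument. Suppose $(\Omega_m,\mu_m)\in U_{ad}$ and $\mu_m\to\mu$ weakly. Any subsequence has, by part (i), a further subsequence converging in all four senses to some shape admissible $(\Omega',\mu')$; uniqueness of the weak limit forces $\mu'=\mu$, and so $\partial\Omega'=\supp\mu$ is fixed. Since $\Omega'$ is connected, contains $D_0$, and is disjoint from $\partial\Omega'$, it coincides with the connected component of $D\setminus\supp\mu$ that contains $D_0$, a description independent of the subsequence. Hence all subsequential Hausdorff, characteristic-function, and compacts limits coincide with the same $\Omega$, and a standard topological argument promotes this to convergence of the full sequence.
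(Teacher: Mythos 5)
Your argument follows essentially the same route as the paper: extract limits via Theorem~\ref{T:nothing_extra}~(i), Theorem~\ref{T:epsinftystable}, Proposition~\ref{prop-A} and Banach--Alaoglu, then use a cigar/Lemma~\ref{L:cigar} contradiction to rule out stray behaviour away from $\overline\Omega$, and finally settle (ii) by a subsequence-of-subsequence argument. The one structural difference is the order of the last two steps of (i): the paper first proves convergence in the sense of compacts by the cigar argument (connecting a point of $\Omega_{m_k}$ near the offending compactum to a fixed $z\in\Omega$) and then reads off $K=\partial\Omega$ from Theorem~\ref{T:nothing_extra}~(ii); you first prove $K=\partial\Omega$ by the same cigar idea and then try to deduce convergence in the sense of compacts from the Hausdorff convergences. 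Your treatment of (ii) is if anything slightly more explicit than the paper's, since you identify the limit as the connected component of $D\setminus\supp\mu$ containing $D_0$.

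There is, however, one real hole in your deduction of convergence in the sense of compacts. For a compactum $L'\subset\mathbb{R}^n\setminus\overline\Omega$ you split $L'$ into the part outside $\overline D$ (trivial) and the part ``inside'', handled by \cite[Proposition 2.2.17]{HENROT-e} applied to $D\setminus\overline\Omega_m\to D\setminus\overline\Omega$. That proposition only covers compacta contained in the \emph{open} set $D\setminus\overline\Omega$, so points of $L'\cap\partial D$ with $L'\cap\partial D\not\subset\overline\Omega$ fall through both cases; and such points genuinely occur here, since $\overline\Omega_m$ meets $\partial D$ (e.g.\ along $\Gamma_{\mathrm{Dir}}$) and the Hausdorff convergence of $\overline D\setminus\Omega_m$ does not by itself forbid $\Omega_m$ from sending vanishing-volume pieces toward a point of $\partial D\setminus\overline\Omega$. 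The statement is still true, and the fix is exactly the cigar argument you already used for $K=\partial\Omega$: if $p_{m_k}\in\Omega_{m_k}$ converged to some $p$ with $d(p,\overline\Omega)>0$, the cigars joining $p_{m_k}$ to a fixed $z\in D_0$ would force $p\in\overline\Omega$ in the limit (this is precisely how the paper handles arbitrary compacta, interior or not). So you should either run that argument for all of $L'$, as the paper does, or add it for the points of $L'$ on $\partial D$.
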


\begin{proof}
By ~\eqref{E:upreg} the measures are uniformly bounded, so that by~Theorem~\ref{T:nothing_extra} (i) we can find a subsequence $(m_k)_k$, an open set $\Omega$ to which the domains $\Omega_{m_k}$ converge in the Hausdorff sense and in the sense of characteristic functions and a  Borel measure $\mu$ with support $K=\supp \mu$ satisfying ~\eqref{E:upreg} and ~\eqref{E:lowreg-w} and such that $\mu_{m_k}\to \mu$ weakly. By~Theorem~\ref{T:epsinftystable} the open set $\Omega$ is a bounded $(\varepsilon,\infty)$-domain. Since $D_0$ is a subset of all $\Omega_{m_k}$'s, it is a subset of $\Omega$, which therefore is seen to be non-empty.

It remains to show convergence in the sense of compacts. If $L$ is a compact subset of $\Omega$, then by \cite[Proposition 2.2.17]{HENROT-e} $L$ is contained in $\Omega_m$ for any large enough $m$. Now suppose that $L\subset \mathbb{R}^n\setminus\overline{\Omega}$, we may assume that $L$ has non-empty interior. Suppose that there is a subsequence $(\Omega_{m_k})_k$ such that $L\cap \overline{\Omega}_{m_k}\neq \emptyset$ for all $k$. If (for some subsequence) we have $L \subset \Omega_{m_k}$, then $\lambda^n(\Omega_{m_k}\setminus \Omega)\geq \lambda^n(L)>0$, what contradicts the convergence in the sense of characteristic functions. If this is not the case, then we must have $L\cap \partial\Omega_{m_k}\neq \emptyset$ (for some subsequence). However, this also leads to a contradiction: Write $\gamma:=d(L,\overline{\Omega})$. Consider a sequence of points $x_{m_k}\subset L\cap \partial\Omega_{m_k}$ converging to a point $x\in L$. For large enough $k$ we have $x_{m_k}\in B(x,\gamma/2)$. Since $x_{m_k}\in \partial\Omega_{m_k}$ we can find $y_{m_k}\in \Omega_{m_k}$ such that $|x_{m_k}-y_{m_k}|<2^{-k}$, and passing to a subsequence if necessary, we may assume the $y_{m_k}$ converge to a point $y\in \overline{B(x,\gamma/2)}$. Fix a point $z\in \Omega$. Then we have $z\in \Omega_{m_k}$ for all sufficiently large $k$ by \cite[Proposition 2.2.17]{HENROT-e}, and for each such $k$ we can find a rectifiable curve $\gamma_k$ joining $z$ and $y_{m_k}$ such that $C(\gamma_k,\varepsilon)\subset \Omega_{m_k}$. Passing to another subsequence if necessary we may, by Lemma~\ref{L:cigar}, assume that the curves $\gamma_k$ converge to a rectifiable curve $\gamma$ joining $z$ and $y$ and that the sets $C(\gamma_k,\varepsilon)$ converge to $C(\gamma,\varepsilon)$ in the Hausdorff sense. Since $C(\gamma,\varepsilon)\subset \Omega$ by \cite[(2.16) in 2.2.3.2]{HENROT-e} and $y\in \overline{C(\gamma,\varepsilon)}\subset \overline{\Omega}$, this implies that $d(x,\overline{\Omega})\leq \gamma/2$, what contradicts the fact that $x\in L$. Consequently we have $L\subset \mathbb{R}^n\setminus \Omega_m$ for all sufficiently large $m$, and can conclude that $\Omega_m\to\Omega$ in the sense of compacts.
From~Theorem~\ref{T:nothing_extra} (ii) it now follows that $K=\partial\Omega$. This proves (i).

To see (ii), note that by Proposition~\ref{prop-A} or Lemma~\ref{measure-lemma}, respectively, $\mu$ satisfies the  desired scaling conditions.  By (i) and Theorem~\ref{T:epsinftystable} $(\Omega_m)_{m}$ has subsequence   convergent in the Hausdorff sense, in the sense of characteristic functions and in the sense of compacts to some $\Omega\subset D$ which is an $(\varepsilon,\infty)$-domain contained in $D$ and 
such that $\supp\mu=\partial\Omega$. Since the limit domain of any such subsequences of domains must have this boundary but at the same time be bounded, $\Omega$ is the limit   of the sequence. 
\end{proof}
%
%

\begin{remark}
Theorem~\ref{T:nothing_extra} and Theorem~\ref{T:compact} rely on ~\eqref{E:lowreg-w} in a crucial way. For $d=0$ in Definition~\ref{DefShapeAdmis} estimate ~\eqref{E:upreg} reduces to a uniform bound for the total masses, but this already suffices to have Theorem~\ref{T:nothing_extra} and Theorem~\ref{T:compact}. 
\end{remark}

\section{Trace and extension operators}
\label{S:traceExt}

We review known trace and extension methods that combine well with our setup and record some consequences.  

\subsection{Traces and extensions for closed subsets of $\mathbb{R}^n$} 

For any $\beta>0$ the symbol $H^\beta(\mathbb{R}^n)$ denotes the Bessel-potential space of order $\beta$, \cite{AH96, Triebel78, TRIEBEL-1997}, that is, the space of all $f\in L^2(\mathbb{R}^n)$ such that $(1+|\xi|^2)^{\beta/2}\hat{f}\in L^2(\mathbb{R}^n)$, where $f\mapsto \hat{f}$ denotes the Fourier transform. It is a Hilbert space with norm $\|f\|_{H^\beta(\mathbb{R}^n)}:=\|(1+|\xi|^2)^{\beta/2}\hat{f}\|_{L^2(\mathbb{R}^n)}$.

We are interested in the trace of functions $f\in H^\beta(\mathbb{R}^n)$ to a closed set $K\subset \mathbb{R}^n$. For $\beta >n/2$ we have $H^\beta(\mathbb{R}^n)\subset C(\mathbb{R}^n)$, see e.g. \cite[2.8.1]{Triebel78},
and one can use the pointwise restriction $f|_K$. However, for our purposes the case $0<\beta\leq n/2$ is relevant. It is well known that for any $f\in H^\beta(\mathbb{R}^n)$ the limit 
\begin{equation}\label{E:traceop}
\widetilde{f}(x)=\lim_{r\to 0}\frac{1}{\lambda^n(B(x,r))}\int_{B(x,r)}f(y)dy
\end{equation}
exists at $H^\beta(\mathbb{R}^n)$-quasi every $x\in\mathbb{R}^n$ and that $\widetilde{f}$ defines a $H^\beta(\mathbb{R}^n)$-quasi continuous version of $f$, \cite[Theorem 6.2.1]{AH96}. If $\mu$ is a Borel measure with support $K=\supp\mu$ and satisfying  ~\eqref{E:upreg} for sufficiently large $d$, then it charges no set of zero $H^\beta(\mathbb{R}^n)$-capacity and consequently the limit in ~\eqref{E:traceop} does exist for all $x\in K\setminus N$, where $N\subset K$ is a $\mu$-null set. Under these circumstances one can define a $\mu$-class $\operatorname{Tr}_K f$ on $K$ by setting
\begin{equation}\label{E:traceopdef}
\operatorname{Tr}_K f(x)=\widetilde{f}(x)\quad \text{if $x\in K\setminus N$ and $0$ otherwise}. 
\end{equation}
We state a direct consequence of \cite[Theorems 7.2.2 and 7.3.2, together with Propositions 5.1.2 and 5.1.4]{AH96}.

\begin{theorem}\label{ThTracecheap}
	Let  $0< d\leq n$, $(n-d)/2<\beta\leq n/2$ and $2<q<2d/(n-2\beta)$ (with $1/0:=+\infty$). Suppose that $K=\supp\mu$ with a Borel measure $\mu$ satisfying ~\eqref{E:upreg}. Then $\operatorname{Tr}_K$ is a compact linear operator from $H^\beta(\R^n)$ into $L^q(K,\mu)$, and we have $\left\|\operatorname{Tr}_K f\right\|_{L^q(K)}\leq c_{\operatorname{Tr}}\left\|f\right\|_{H^\beta(\mathbb{R}^n)}$, $f\in H^\beta(\mathbb{R}^n)$, with a constant $c_{\mathrm{Tr}}>0$ depending only on $\beta$, $d$, $n$, $c_d^A$, $q$.
\end{theorem}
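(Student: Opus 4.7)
The plan is to read off the statement from the general trace/capacity theory of Adams--Hedberg \cite{AH96}, with a careful check on the quantitative dependence of the constant. Three things need to be verified: that $\operatorname{Tr}_K f$ is well-defined as a $\mu$-equivalence class, the boundedness of $\operatorname{Tr}_K : H^\beta(\mathbb{R}^n) \to L^q(K,\mu)$ with a constant depending only on the stated quantities, and the upgrade to compactness.

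For well-definedness, I would note that the limit in ~\eqref{E:traceop} exists $H^\beta$-quasi everywhere by \cite[Theorem 6.2.1]{AH96}, so it suffices to show that $\mu$ charges no set of $H^\beta$-capacity zero. Propositions 5.1.2 and 5.1.4 of \cite{AH96} relate the Bessel $H^\beta$-capacity to the $d$-dimensional Hausdorff measure $\mathcal{H}^d$: any set of zero $H^\beta$-capacity has zero $\mathcal{H}^d$-measure whenever $d > n-2\beta$. The hypothesis $\beta > (n-d)/2$ rewrites as $d > n-2\beta$, while the upper Ahlfors bound ~\eqref{E:upreg} forces $\mu \ll \mathcal{H}^d$ on $K$; hence every $H^\beta$-polar set would be $\mu$-null, and ~\eqref{E:traceopdef} determines a well-defined $L^0(K,\mu)$-class. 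For the continuity estimate $\|\operatorname{Tr}_K f\|_{L^q(K,\mu)} \leq c_{\operatorname{Tr}} \|f\|_{H^\beta(\mathbb{R}^n)}$ I would invoke the Maz'ya--Adams strong-type trace inequality \cite[Theorem 7.2.2]{AH96} with $p=2$ and $\lambda = d$, evaluated at the endpoint $q_\ast := 2d/(n-2\beta)$; inspection of its proof shows that the constant is built solely from $\beta, d, n$, and $c_d^A$ via Riesz-potential and covering estimates. For any intermediate $q \in (2, q_\ast)$, Hölder interpolation against $\mu$ transfers the inequality with the claimed mild additional dependence on $q$.

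For compactness I would exploit the strict inequality $q < q_\ast$. One route is to invoke \cite[Theorem 7.3.2]{AH96} directly, which gives compactness precisely under this strict exponent gap. A hands-on alternative is to pick $q < q_1 < q_\ast$ and, for a bounded sequence $(f_j)_j \subset H^\beta(\mathbb{R}^n)$, extract a weakly convergent subsequence; then apply Rellich--Kondrachov on a fixed ball to secure $L^2_{\loc}$-strong convergence, pass to a further subsequence to obtain $\mu$-a.e.\ convergence of the quasi-continuous representatives, and invoke Vitali's theorem, whose uniform integrability hypothesis is supplied by the uniform $L^{q_1}(K,\mu)$-bound coming from the boundedness step applied to exponent $q_1$. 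This yields the desired convergence in $L^q(K,\mu)$.

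The hard part will be the bookkeeping of constants in the boundedness step: the published statement of \cite[Theorem 7.2.2]{AH96} does not separate the dependence on $c_d^A$ from other global features of $\mu$, so one must revisit the Wolff-potential and covering arguments in its proof to confirm the claimed form of $c_{\operatorname{Tr}}$.
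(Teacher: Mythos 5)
Your proposal is correct and follows essentially the same route as the paper, which derives the theorem directly from \cite[Theorems 7.2.2 and 7.3.2, together with Propositions 5.1.2 and 5.1.4]{AH96}: the capacity--Hausdorff measure comparison makes $\operatorname{Tr}_K$ well defined $\mu$-a.e., Theorem 7.2.2 gives the bound with the stated constant, and Theorem 7.3.2 gives compactness from the strict inequality $q<2d/(n-2\beta)$. Your alternative hands-on compactness argument via Rellich--Kondrachov and Vitali is a fine substitute but not needed.
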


In the situation of this theorem we have $H^\beta(\mathbb{R}^n)=\mathring{H}^\beta(\mathbb{R}^n\setminus K)\oplus \mathcal{H}_K$, where $\mathring{H}^\beta(\mathbb{R}^n\setminus K)$ is the closure of $C_c^\infty(\mathbb{R}^n\setminus K)$ in $H^\beta(\mathbb{R}^n)$ and $\mathcal{H}_K$ denotes its orthogonal complement, \cite[Corollary 2.3.1 and Lemma 2.3.4]{FOT94}. 
Given $\varphi\in \mathrm{Tr}_K(H^\beta(\mathbb{R}^n))$ we say that $g\in H^\beta(\mathbb{R}^n)$ is a weak solution to the Dirichlet problem 
\begin{equation}\label{E:FDP}
(1-\Delta)^\beta g=0 \quad \text{on $\mathbb{R}^n\setminus K$},\quad \mathrm{Tr}_Kg=\varphi \quad \text{on $K$}
\end{equation}
if $\left\langle g,v\right\rangle_{H^\beta(\mathbb{R}^n)}=0$, $v\in \mathring{H}^\beta(\mathbb{R}^n\setminus K)$, and $\mathrm{Tr}_Kg=\varphi$ $\mu$-a.e. on $K$. The following is folklore, see for instance \cite{FarkasJacob2001, TRIEBEL-1997}.

\begin{corollary}\label{C:harmonicext} Let the hypotheses of Theorem~\ref{ThTracecheap} be in force.
	\begin{enumerate}
		\item[(i)] For any $\varphi \in \mathrm{Tr}_K(H^\beta(\mathbb{R}^n))$ there is a unique weak solution $H_{K}\varphi$  to ~\eqref{E:FDP}. 
		\item[(ii)] The map $\varphi \mapsto \left\|\varphi\right\|_{\mathrm{Tr}_K(H^\beta(\mathbb{R}^n))}:=\inf_{g\in H^\beta(\mathbb{R}^n), \varphi=\mathrm{Tr}_K g} \left\|g\right\|_{H^\beta(\mathbb{R}^n)}$ is a norm that makes $\mathrm{Tr}_K(H^\beta(\mathbb{R}^n))$ a Hilbert space. 
		\item[(iii)] The map $H_{K}:\mathrm{Tr}_K(H^\beta(\mathbb{R}^n))\to H^\beta(\mathbb{R}^n)$, $\varphi\mapsto H_{K}\varphi$, is a linear extension operator of norm one, and $\mathrm{Tr}_K(H_{K}\varphi)=\varphi$, $\varphi \in \mathrm{Tr}_K(H^\beta(\mathbb{R}^n))$.
	\end{enumerate}
\end{corollary}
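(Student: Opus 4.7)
The strategy is to interpret $H_K\varphi$ as the orthogonal projection onto $\mathcal{H}_K$ of any $H^\beta$-preimage of $\varphi$ under $\operatorname{Tr}_K$, and then to read off (i)--(iii) directly from the Hilbert-space geometry of the decomposition $H^\beta(\mathbb{R}^n)=\mathring{H}^\beta(\mathbb{R}^n\setminus K)\oplus\mathcal{H}_K$ recalled just before the corollary.

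The first step is to check that $\operatorname{Tr}_K v=0$ $\mu$-a.e.\ for every $v\in\mathring{H}^\beta(\mathbb{R}^n\setminus K)$. For $v\in C_c^\infty(\mathbb{R}^n\setminus K)$ the limit in~\eqref{E:traceop} vanishes pointwise on $K$, and the general case follows from the $L^q(K,\mu)$-continuity of $\operatorname{Tr}_K$ provided by Theorem~\ref{ThTracecheap} together with the very definition of $\mathring{H}^\beta(\mathbb{R}^n\setminus K)$ as a closure. One then needs the classical capacitary characterisation $\ker(\operatorname{Tr}_K)=\mathring{H}^\beta(\mathbb{R}^n\setminus K)$, which I would simply cite from \cite[Chapter 9]{AH96}; this is the only genuinely nontrivial ingredient and is the main obstacle to writing a self-contained proof. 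With this in hand the restriction $\operatorname{Tr}_K|_{\mathcal{H}_K}:\mathcal{H}_K\to\operatorname{Tr}_K(H^\beta(\mathbb{R}^n))$ is a linear bijection.

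For (i), given $\varphi$ in the trace space, I would pick any $f\in H^\beta(\mathbb{R}^n)$ with $\operatorname{Tr}_K f=\varphi$, decompose $f=f_0+f_1$ with $f_0\in\mathring{H}^\beta(\mathbb{R}^n\setminus K)$ and $f_1\in\mathcal{H}_K$, and set $H_K\varphi:=f_1$. The previous step gives $\operatorname{Tr}_K H_K\varphi=\operatorname{Tr}_K f-\operatorname{Tr}_K f_0=\varphi$, and the membership $H_K\varphi\in\mathcal{H}_K$ is precisely the weak equation $\langle H_K\varphi,v\rangle_{H^\beta(\mathbb{R}^n)}=0$ for every $v\in\mathring{H}^\beta(\mathbb{R}^n\setminus K)$. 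Any two weak solutions differ by an element of $\mathcal{H}_K\cap\ker(\operatorname{Tr}_K)=\{0\}$, yielding both uniqueness and well-definedness of $H_K\varphi$ independently of the chosen $f$.

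For (ii), the point is that the quotient $H^\beta(\mathbb{R}^n)/\ker(\operatorname{Tr}_K)$ is a Hilbert space and its quotient norm transported through $\operatorname{Tr}_K$ is exactly $\|\cdot\|_{\operatorname{Tr}_K(H^\beta(\mathbb{R}^n))}$ as defined in the corollary. For (iii), linearity of $H_K$ is inherited from the orthogonal projection onto $\mathcal{H}_K$; for any extension $g$ of $\varphi$ one has $\|H_K\varphi\|_{H^\beta(\mathbb{R}^n)}\le\|g\|_{H^\beta(\mathbb{R}^n)}$ (since $H_K\varphi$ is the orthogonal projection onto $\mathcal{H}_K$ of any such $g$), which gives $\|H_K\varphi\|_{H^\beta(\mathbb{R}^n)}\le\|\varphi\|_{\operatorname{Tr}_K(H^\beta(\mathbb{R}^n))}$, while the reverse inequality is immediate because $H_K\varphi$ is itself an admissible extension. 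Thus $H_K$ has norm one, concluding the proof.
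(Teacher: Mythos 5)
Your construction coincides with the paper's: the paper's entire proof is the single sentence that the orthogonal projection of a preimage $f$ onto $\mathcal{H}_K$ has the desired properties, with a pointer to \cite[Section 2.3]{FOT94}, so your text is a legitimate expansion of exactly that argument. The inclusion $\mathring{H}^\beta(\mathbb{R}^n\setminus K)\subset\ker(\operatorname{Tr}_K)$ is correctly reduced to density plus the $L^q(K,\mu)$-continuity of $\operatorname{Tr}_K$ from Theorem~\ref{ThTracecheap}, the quotient-space argument for (ii) is standard, and the two inequalities giving norm one in (iii) are fine once $H_K$ is known to be well defined.

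The gap is precisely at the step you yourself flag, but it is not closed by the citation you propose. The spectral synthesis theorem in \cite{AH96} (and \cite{FOT94}) characterizes $\mathring{H}^\beta(\mathbb{R}^n\setminus K)$ through \emph{quasi-everywhere} vanishing on $K$ of the quasi-continuous representative (and, when $\beta>1$, of derivatives up to the relevant order), whereas $\ker(\operatorname{Tr}_K)$ here is defined by \emph{$\mu$-a.e.} vanishing. Condition~\eqref{E:upreg} is only an upper bound, so $\mu$-null subsets of $K$ may have positive $H^\beta$-capacity (already $K=S^1\subset\mathbb{R}^2$ with $\mu$ the arclength contains compact $\mu$-null sets of positive logarithmic capacity), and hence the identity $\ker(\operatorname{Tr}_K)=\mathring{H}^\beta(\mathbb{R}^n\setminus K)$ is not what the reference delivers; for $\beta>1$ it is actually false. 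Concretely, for $K=S^{n-1}$, $n\ge 4$, $\beta=2$, $\mu=\mathcal{H}^{n-1}$ (so the hypotheses of Theorem~\ref{ThTracecheap} hold with $d=n-1$), the function $g(x)=\eta(x)(|x|-1)$ with a smooth cutoff $\eta$ lies in $H^2(\mathbb{R}^n)$ and has $\operatorname{Tr}_Kg=0$, yet is not in $\mathring{H}^2(\mathbb{R}^n\setminus K)$ because its normal-derivative trace equals $1$; its component in $\mathcal{H}_K$ is therefore a nonzero weak solution of~\eqref{E:FDP} with $\varphi=0$, so both the uniqueness in (i) and the well-definedness of ``the projection of an arbitrary preimage onto $\mathcal{H}_K$'' fail as argued. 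To be fair, the paper's own one-line proof silently assumes the same statement $\mathcal{H}_K\cap\ker(\operatorname{Tr}_K)=\{0\}$. A clean repair of everything except the word ``unique'' is to define $H_K\varphi$ as the orthogonal projection of any preimage onto $(\ker\operatorname{Tr}_K)^\perp$: since $\mathring{H}^\beta(\mathbb{R}^n\setminus K)\subset\ker(\operatorname{Tr}_K)$ this subspace sits inside $\mathcal{H}_K$, so the result is still a weak solution, is manifestly independent of the preimage, linear, and realizes the infimum in (ii)--(iii); the uniqueness assertion in (i) then still requires $\mathcal{H}_K\cap\ker(\operatorname{Tr}_K)=\{0\}$, which needs either additional hypotheses tying $\mu$-null subsets of $K$ to capacity or a restriction on $\beta$ together with a genuine argument, not merely the citation.
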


To the linear operator, $H_K$ one also refers to \emph{$1$-harmonic extension operator}. 
\begin{proof}
	If $\varphi=\mathrm{Tr}_K f$ with $f\in H^\beta(\mathbb{R}^n)$, then the orthogonal projection $H_{K}\varphi$ of $f$ onto $\mathcal{H}_K$ has the desired properties, \cite[Section 2.3]{FOT94}. The rest follows.
\end{proof}

\begin{remark}
A description of the space $\mathrm{Tr}_K(H^\beta(\mathbb{R}^n))$ in terms of an atomic decomposition is provided in \cite{JONSSON-2009}. Note that for orders $p$ of integrability other than $2$, the $1$-harmonic extension is generally no longer linear.
\end{remark}

For later use, we record the following convergence result for integrals of traces.

\begin{theorem}\label{L:traceconvergence}  
Let $D\subset\mathbb{R}^n$ be a bounded open set, $0< d\leq n$ and $(n-d)/2<\beta\leq n/2$. Let $(\mu_m)_m$ be a sequence of finite Borel measures with supports $K_m=\supp\mu_m$ contained in $\overline{D}$ and such that ~\eqref{E:upreg} holds for all $m$ with the same constant. Suppose that $(\mu_m)_m$ converges weakly to a Borel measure $\mu$. If $(v_m)_m\subset H^{\beta}(\mathbb{R}^n)$ is a sequence that converges to some $v$ in $H^{\beta}(\mathbb{R}^n)$, then 
	\[\lim_{m\to\infty}\int_{K_{m}}|\mathrm{Tr}_{K_m}v_{m}|^2 d\mu_{m} =\int_{K}|\mathrm{Tr}_{K} v|^2  d\mu,\]
	where $K:=\supp\mu$.  
\end{theorem}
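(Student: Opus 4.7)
The plan is to exploit uniform boundedness of the trace operators $\mathrm{Tr}_{K_m}$ in an $L^2$ setting, reduce to the case where the varying sequence $(v_m)_m$ is replaced by the single limit $v$, approximate $v$ by a compactly supported smooth function, and handle that continuous case directly via weak convergence of measures. First I establish a uniform $L^2$ trace bound. Since $\overline D$ is bounded, it can be covered by a finite number $N$ of closed unit balls, where $N$ depends only on $\mathrm{diam}(D)$. Condition~\eqref{E:upreg} applied at centres in $K_m$ together with this covering yields $\mu_m(\overline D)\le Nc_d^A$ for every $m$, and by Proposition~\ref{prop-A}(i) the same bound holds for $\mu$. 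In Theorem~\ref{ThTracecheap} the constant $c_{\mathrm{Tr}}$ depends only on $\beta,d,n,c_d^A,q$; combined with H\"older's inequality (available because $q>2$ and the total masses are uniformly bounded) this gives
\[
\|\mathrm{Tr}_{K_m}f\|_{L^2(K_m,\mu_m)}\le C\|f\|_{H^\beta(\mathbb R^n)},\qquad f\in H^\beta(\mathbb R^n),
\]
with $C$ independent of $m$, and analogously for $\mathrm{Tr}_K$. Using $\bigl||a|^2-|b|^2\bigr|\le|a-b|(|a|+|b|)$, Cauchy--Schwarz and this bound,
\[
\Bigl|\int_{K_m}|\mathrm{Tr}_{K_m}v_m|^2 d\mu_m-\int_{K_m}|\mathrm{Tr}_{K_m}v|^2 d\mu_m\Bigr|\le C\|v_m-v\|_{H^\beta}\bigl(\|v_m\|_{H^\beta}+\|v\|_{H^\beta}\bigr)\to 0,
\]
so it suffices to prove the theorem with $v_m$ replaced by the fixed function $v$.

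Given $\eta>0$, choose $\varphi\in C_c^\infty(\mathbb R^n)$ with $\|v-\varphi\|_{H^\beta}<\eta$. The same estimate yields
\[
\Bigl|\int_{K_m}|\mathrm{Tr}_{K_m}v|^2 d\mu_m-\int_{K_m}|\varphi|^2 d\mu_m\Bigr|\le C\eta\bigl(\|v\|_{H^\beta}+\|\varphi\|_{H^\beta}\bigr),
\]
and analogously with $\mu_m,K_m$ replaced by $\mu,K$; here I have used that for continuous $\varphi$ the averages in~\eqref{E:traceop} converge pointwise to $\varphi$ by Lebesgue differentiation, so $\mathrm{Tr}_{K_m}\varphi=\varphi|_{K_m}$ and $\mathrm{Tr}_K\varphi=\varphi|_K$. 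Since $|\varphi|^2\in C_b(\mathbb R^n)$, the weak convergence $\mu_m\to\mu$ gives
\[
\int_{K_m}|\varphi|^2 d\mu_m=\int_{\mathbb R^n}|\varphi|^2 d\mu_m\longrightarrow\int_{\mathbb R^n}|\varphi|^2 d\mu=\int_K|\varphi|^2 d\mu.
\]
Combining these three estimates via the triangle inequality yields $\limsup_m\bigl|\int_{K_m}|\mathrm{Tr}_{K_m}v_m|^2 d\mu_m-\int_K|\mathrm{Tr}_K v|^2 d\mu\bigr|\le C'\eta$, and since $\eta>0$ was arbitrary, the claimed limit follows.

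The delicate point is securing the uniform $L^2$ trace bound: Theorem~\ref{ThTracecheap} only supplies $L^q$ estimates for $q>2$, so one must first upgrade the uniform $L^q$ bound to an $L^2$ one via H\"older and the uniform total-mass bound produced by the finite covering of $\overline D$. Once this is in place, the remaining steps follow a standard ``density plus convergence on a dense class'' pattern, with the identification $\mathrm{Tr}_K\varphi=\varphi|_K$ for continuous $\varphi$ being immediate from Lebesgue differentiation.
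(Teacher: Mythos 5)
Your argument is correct and follows essentially the same route as the paper's proof: the same telescoping decomposition (replace $v_m$ by $v$ via a uniform $L^2$ trace bound obtained from Theorem~\ref{ThTracecheap} plus H\"older, approximate $v$ by a smooth compactly supported $\varphi$, and pass to the limit for $\varphi$ by weak convergence of the measures). The only cosmetic difference is that you derive the uniform mass bound from a covering of $\overline{D}$ and~\eqref{E:upreg} (where you should take balls of radius $1/2$ rather than unit balls so that the enlarged balls centred in $K_m$ still have radius $\le 1$), whereas the paper simply reads it off from the weak convergence of $(\mu_m)_m$.
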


\begin{proof}
	By Lemma~\ref{measure-lemma} also $\mu$ satisfies ~\eqref{E:upreg} and Theorem~\ref{ThTracecheap} applies. Since $C^\infty_c(\mathbb{R}^n)$ is dense in $H^\beta(\mathbb{R}^n)$, we can find a sequence $(\varphi_j)_{j\in \N} \subset C_c^\infty(\mathbb{R}^n)$ converging to $v$ in $H^\beta(\mathbb{R}^n)$. Following \cite{CAPITANELLI-2010-1}, we observe that
	\begin{multline}
	\left|\int_{K_m}|\mathrm{Tr}_{K_m} v_{m}|^2 d\mu_{m} -\int_{K}|\mathrm{Tr}_{K} v|^2  d\mu \right|\\
	\le \left|\int_{K_{m}}|\mathrm{Tr}_{K_{m}} v_{m}|^2  d\mu_{m} -\int_{K_{m}}|\mathrm{Tr}_{K_{m}} v|^2  d\mu_{m}\right|
	+\left|\int_{K_{m}}|\mathrm{Tr}_{K_{m}} v|^2  d\mu_{m} -\int_{K_{m}} |\varphi_{j}|^2  d\mu_{m}\right| \\
	+\left|\int_{K_{m}} |\varphi_{j}|^2  d\mu_{m} -\int_{K} |\varphi_{j}|^2  d\mu\right|
	+\left|\int_{K} |\varphi_{j}|^2  d\mu -\int_{K}|\mathrm{Tr}_{K} v|^2  d\mu\right|.\label{EqBigEstInt-}
	\end{multline}
	To estimate the first term on the right hand side of ~\eqref{EqBigEstInt-} we control it using the Cauchy-Schwarz inequality and the reverse triangle inequality, 
	\begin{multline}
	\left|\int_{K_{m}}|\mathrm{Tr}_{K_{m}} v_{m}|^2  d\mu_{m} -\int_{K_{m}}|\mathrm{Tr}_{K_{m}} v|^2  d\mu_{m}\right|\notag\\
	\le \|\mathrm{Tr}_{K_{m}} (v_{m} - v)\|_{L^2(K_{m})}\left(\|\mathrm{Tr}_{K_{m}}v_{m}\|_{L^2(K_{m})}+\|\mathrm{Tr}_{K_{m}} v\|_{L^2(K_{m})}\right).
	\end{multline}
	Since $\beta$, $d$, $n$ and $c_d^A$ are kept fixed and $\sup_m \mu_m(\overline{D})<+\infty$ by weak convergence, 
	Theorem~\ref{ThTracecheap} and H\"older's inequality ensure the existence of a constant $c_{\mathrm{Tr}}'>0$, independent of $m$, such that 
	$\|\mathrm{Tr}_{K_m} (v_m-v)\|_{L^2(K_m)}\le c_{\mathrm{Tr}}' \|v_m-v\|_{H^\beta(\R^n)}$, 
	what goes to zero as $m\to \infty$. Since also 
	\[\max \{\sup_m \|\mathrm{Tr}_{K_{m}}v_{m}\|_{L^2(K_{m})}, \sup_m \|\mathrm{Tr}_{K_{m}}v\|_{L^2(K_{m})}\} \leq c_{\mathrm{Tr}}'\sup_m \|v_{m}\|_{H^\beta(\R^n)},\]
	the first term in~~\eqref{EqBigEstInt-} is seen to converge to $0$ as $m \to +\infty$. 
	For the second term in~~\eqref{EqBigEstInt-} we can use $\sup_m\|\mathrm{Tr}_{K_{m}} (v-\varphi_{j})\|_{L^2(K_{m})}\le c_{\mathrm{Tr}}'\| v-\varphi_{j}\|_{H^\beta(\R^n)}$
	to see it converges to zero as $j\to \infty$, and the same with $K$ in place of $K_{m}$ yield the convergence to zero of the last term. The third term converges to zero as $m\to\infty$ by weak convergence.
\end{proof}

If refined scaling properties of $\mu$ as in Subsection~\ref{SS:refined} are known, one can introduce Besov spaces on $K$ with explicit norms, see \cite{JONSSON-1984-1, JONSSON-1994, JONSSON-2009}. We recall the definition given initially in \cite{JONSSON-1994}.

\begin{definition}\label{D:Besov} 
Let $0\leq d\leq n$, $d\leq s\leq n$, $s>0$ and $(n-d)/2<\beta <1+(n-s)/2$. Suppose $\mu$ is a Borel measure on $\mathbb{R}^n$ with support $\supp \mu=K$ satisfying ~\eqref{EqMuDs}, ~\eqref{EqMuLd},  
~\eqref{Eqnormalized}.
The Besov spaces $B_\beta^{2,2}(K,\mu)$ on $K$ is defined as the space of $\mu$-classes of real-valued functions $f$ on $K$ such that the norm 
	\begin{multline}
	\left\|f\right\|_{B_\beta^{2,2}(K,\mu)}:=\notag\\
	\left\|f\right\|_{L^2(K,\mu)}+\left(\sum_{j=0}^\infty 2^{j(\beta-\frac{n}{2})}\int\int_{|x-y|<2^{-j}}\frac{(f(x)-f(y))^2}{\mu(B(x,2^{-j}))\mu(B(y,2^{-j}))}\mu(dy)\mu(dx)\right)^{1/2}
	\end{multline}
	is finite.
\end{definition}

The spaces $B_\beta^{2,2}(K,\mu)$ are Hilbert spaces. If $\mu_1$ and $\mu_2$ are two different measures satisfying the hypotheses of  Definition~\ref{D:Besov} and with the same support $K$, then  Theorem~\ref{ThTrace} below implies that the resulting spaces $B_\beta^{2,2}(K,\mu_1)$ and $B_\beta^{2,2}(K,\mu_2)$ are equivalent Hilbert spaces, see \cite[Section 3.5]{JONSSON-1994}. We therefore simply write $B^{2,2}_\beta(K)$ for $B_\beta^{2,2}(K,\mu)$. The following result is a special case of \cite[Theorem 1]{JONSSON-1994}.

\begin{theorem}\label{ThTrace}
Let  $0\leq d\leq n$, $d\leq s\leq n$, $s>0$ and $(n-d)/2<\beta <1+(n-s)/2$. 
Suppose $K\subset \R^n$ is a closed set which is the support of a Borel measure $\mu$ satisfying ~\eqref{EqMuDs}, ~\eqref{EqMuLd},  
~\eqref{Eqnormalized}.
Then  
	\begin{enumerate}
		\item[(i)] $\operatorname{Tr}_K$ is a continuous linear operator from $H^\beta(\R^n)$ onto $B^{2,2}_\beta(K)$, and there is 
		a constant $c_{\mathrm{Tr}}>0$ depending only on $\beta$, $s$, $d$, $n$, $c_s$, $c_d$, $c_1$ and $c_2$ such that 
		$\left\|\operatorname{Tr}_K f\right\|_{B_\beta^{2,2}(K)}\leq c_{\operatorname{Tr}}\left\|f\right\|_{H^\beta(\mathbb{R}^n)}$, $f\in H^\beta(\mathbb{R}^n)$.
		\item[(ii)] There is a continuous linear extension operator $E_K:B^{2,2}_\beta(K)\to  H^\beta (\R^n)$ such that $\operatorname{Tr}_K(E_K f)=f$, $f\in B^{2,2}_\beta(K)$.
	\end{enumerate}
\end{theorem}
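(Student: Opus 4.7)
The plan is to obtain both assertions as a direct special case of \cite[Theorem 1]{JONSSON-1994}, after verifying that (a) the hypotheses on $\mu$ and the admissible range of $\beta$ coincide with ours, and (b) the trace operator $\mathrm{Tr}_K$ defined pointwise $\mu$-a.e.\ via Lebesgue averages in ~\eqref{E:traceop}--~\eqref{E:traceopdef} coincides $\mu$-almost everywhere with the trace map employed by Jonsson. Jonsson's theorem, under the $D_s$-, $L_d$-, and normalization conditions ~\eqref{EqMuDs}, ~\eqref{EqMuLd}, ~\eqref{Eqnormalized}, and for $\beta$ in exactly the interval $((n-d)/2,\,1+(n-s)/2)$, asserts continuous surjectivity of this trace map onto $B^{2,2}_\beta(K)$ and the existence of a continuous linear right inverse $E_K$; our parameter constraints and scaling hypotheses reproduce his setting verbatim.

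The identification of the two trace notions is the first technical point. Under ~\eqref{EqMuLd} combined with $\beta>(n-d)/2$, the measure $\mu$ charges no set of zero $H^\beta$-capacity; this follows from the Frostman-type comparison between Hausdorff content and Bessel capacity, see \cite[Chapter 5]{AH96}. Consequently, for any $f\in H^\beta(\mathbb{R}^n)$ the limit in ~\eqref{E:traceop} exists $\mu$-a.e.\ on $K$ and the resulting $\mu$-class $\mathrm{Tr}_K f$ agrees with the restriction to $K$ of the quasi-continuous representative $\widetilde f$. Jonsson's trace also coincides with $\widetilde f|_K$ $\mu$-almost everywhere, by the doubling property of $\mu$ implied by ~\eqref{EqMuDs} together with a Lebesgue differentiation argument adapted to $\mu$, so the two notions of trace define the same map into $L^2(K,\mu)$ and, a fortiori, into $B^{2,2}_\beta(K)$ once (i) is established.

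For part (ii), I would simply invoke Jonsson's extension operator, which is constructed via an atomic decomposition of $B^{2,2}_\beta(K)$ paired with a Whitney decomposition of $\mathbb{R}^n\setminus K$; it is by construction a continuous linear right inverse of $\mathrm{Tr}_K$. Surjectivity of $\mathrm{Tr}_K$ in (i) is then automatic. Alternatively, one could invoke the $1$-harmonic extension from Corollary~\ref{C:harmonicext} to obtain a canonical linear right inverse once $B^{2,2}_\beta(K)$ and $\mathrm{Tr}_K(H^\beta(\mathbb{R}^n))$ have been identified as equivalent Hilbert spaces (which follows from (i) via the open mapping theorem). The constant $c_{\mathrm{Tr}}$ can be extracted from Jonsson's argument, where it depends only on the geometric constants $c_s,c_d,c_1,c_2$ and on $\beta,s,d,n$.

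The main obstacle is precisely this quantitative bookkeeping: Jonsson's statement is only qualitative, so to obtain a bound uniform over a class of measures --- which is needed for the compactness and Mosco-convergence arguments of the later sections --- one must re-examine the quantitative steps of his proof, specifically the Whitney cube volume estimates and the bounds on the coefficients in the atomic decomposition of $B^{2,2}_\beta(K)$, to confirm that no hidden dependence on $\mu$ or $K$ enters beyond the explicit parameters $(\beta,s,d,n,c_s,c_d,c_1,c_2)$.
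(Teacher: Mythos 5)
Your proposal is correct and follows essentially the same route as the paper, which simply records this result as a special case of \cite[Theorem 1]{JONSSON-1994} and notes that the uniformity of $c_{\mathrm{Tr}}$ in the stated parameters follows from \cite[Lemma 3 and its proof]{JONSSON-1994} --- precisely the quantitative bookkeeping you flag as the main remaining task. Your additional care in identifying the $\mu$-a.e.\ Lebesgue-average trace of ~\eqref{E:traceopdef} with Jonsson's trace is a reasonable elaboration the paper leaves implicit.
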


The independence of the constant $c_{\mathrm{Tr}}$ of all except the stated quantities follows from \cite[Lemma 3 and its proof]{JONSSON-1994}.

\subsection{$W^{1,2}$-admissible domains}\label{Ss:SAD}
We define a class of domains well adapted to boundary value problems and prove basic facts about associated trace and extension operators. We remind the reader that we assume $n\geq 2$ throughout.

Recall that the Sobolev space $W^{k,p}(\Omega)$ with $k\in \N$ and $p\in [1,\infty)$ is defined as the space of all $f\in L^p(\Omega)$ for which we have $D^\gamma f\in L^p(\Omega)$ in the distributional sense for any multi-index $\gamma$ satisfying $|\gamma|\le k$. It is well known and easy to see that for nonnegative integers $k$ the space $H^k(\mathbb{R}^n)$ coincides with the Sobolev space $W^{k,2}(\mathbb{R}^n)$ in the sense of equivalently normed vector spaces. 

Given $k=1,2,...$ and $1\leq p\leq \infty$, a domain $\Omega\subset\mathbb{R}^n$ is called a \emph{$W^{k,p}$ -extension domain} if there exists a bounded linear extension operator $E: W^{k,p}(\Omega)\to W^{k,p}(\mathbb{R}^n)$, \cite[p. 1218]{HAJLASZ-2008}. 
Every Lipschitz domain is a $W^{k,p}$ -extension domain for any $k=1,2,...$ and $1\leq p\leq \infty$, see \cite{CALDERON-1961, STEIN-1970}. It was shown in \cite{JONES-1981} that any \emph{$(\eps,\delta)$-domain} $\Omega\subset \mathbb{R}^n$, i.e., any (possibly unbounded) domain $\Omega\subset \mathbb{R}^n$ satisfying the conditions (i) and (ii) in Definition~\ref{DefEDD} for all $x,y\in \Omega$ with $|x-y|<\delta$ for some fixed $\delta>0$, is a $W^{k,p}$-extension domain for any $k=1,2,...$ and $1\leq p\leq \infty$, see also \cite{AHMNT,Rogers}. In particular, we have the following.

\begin{corollary}
	Every $(\varepsilon,\infty)$-domain is a $W^{k,p}$-extension domain for any $k=1,2,...$ and $1\leq p\leq \infty$, and therefore also every shape admissible domain in the sense of Definition~\ref{DefShapeAdmis}.
\end{corollary}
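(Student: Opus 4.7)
The plan is to reduce this to the Jones extension theorem cited in the paragraph immediately preceding the corollary. Recall that in Definition~\ref{DefEDD} an $(\varepsilon,\infty)$-domain is by construction bounded, and the uniform curve conditions (i)--(ii) are required to hold for \emph{all} pairs $x,y\in\Omega$. This is exactly the $(\varepsilon,\delta)$-domain condition of Jones taken with $\delta=+\infty$; equivalently, since $\Omega$ is bounded, the localization $|x-y|<\delta$ in the definition of an $(\varepsilon,\delta)$-domain is automatically satisfied for every $\delta>\diam(\Omega)$.

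Given this identification, the first step is simply to apply the main result of \cite{JONES-1981} (or, as reformulated in \cite{AHMNT,Rogers}): for each $k\in\mathbb{N}$ and each $p\in[1,\infty]$, there exists a bounded linear extension operator $E:W^{k,p}(\Omega)\to W^{k,p}(\mathbb{R}^n)$, with operator norm depending only on $n$, $k$, $p$, and $\varepsilon$. This gives the first assertion of the corollary verbatim.

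The second assertion is then immediate: by Definition~\ref{DefShapeAdmis}, every shape admissible domain is (in particular) an $(\varepsilon,\infty)$-domain, so the first assertion applies to it. No additional use of the boundary measure $\mu$ or of the parameters $s,d,\bar c_s^A,c_d^A$ is needed here; these enter only for the trace-type results of the following subsection.

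There is no substantive obstacle: the proof is a one-line invocation of Jones's theorem after observing that $\delta=+\infty$ (or, equivalently, any $\delta\geq\diam(\Omega)$) is a legitimate choice in his hypothesis. The only thing worth flagging explicitly in the write-up is the boundedness of $\Omega$, which is what makes the ``$\delta=\infty$'' reading harmless and places us strictly within the setting of \cite{JONES-1981,Rogers}.
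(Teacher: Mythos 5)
Your proposal is correct and follows exactly the paper's route: the corollary is stated there as an immediate consequence of Jones's extension theorem for $(\varepsilon,\delta)$-domains (with the $(\varepsilon,\infty)$ case being the ``all pairs $x,y$'' instance), combined with the fact that shape admissible domains are by definition $(\varepsilon,\infty)$-domains. Nothing further is needed.
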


Any Lipschitz domain is an $(\varepsilon,\delta)$-domain for some $\varepsilon$ and $\delta$, \cite{JONES-1981}, and any bounded Lipschitz domain is an $(\eps,\infty)$-domain for some suitable $\varepsilon>0$. For $n\geq 3$ examples of $W^{1,p}$-extension domains are known which are no $(\eps,\delta)$-domains,~\cite{JONES-1981}.

We quote an extension result for Bessel-potential spaces on domains $\Omega$. For $\beta>0$ we write $H^\beta(\Omega)=\{f\in D'(\Omega): \text{$f=g|_\Omega$ for some $g\in H^\beta(\mathbb{R}^n)$}\}$. Endowed with the norm defined by $\left\|u\right\|_{H^\beta(\Omega)}=\inf_{g\in H^\beta(\mathbb{R}^n), f=g|_\Omega}\left\|g\right\|_{H^\beta(\mathbb{R}^n)}$ it becomes a Hilbert space. It follows from this definition that for $W^{1,2}$-extension domains $\Omega\subset \mathbb{R}^n$ the spaces $H^1(\Omega)$ and $W^{1,2}(\Omega)$ agree as equivalently normed Hilbert spaces, see \cite[4.2.1 and 4.2.4]{Triebel78} for a more classical case. The following will be used in the next section.

\begin{proposition}\label{P:extensionfracSobo}
	Let $\Omega\subset\mathbb{R}^n$ be a bounded $(\varepsilon,\infty)$-domain. Then there is a linear extension operator $f\mapsto \ext_\Omega f$ such that for any $0\leq \beta\leq 1$ we have $\ext_\Omega:H^\beta(\Omega)\to H^\beta(\mathbb{R}^n)$ with 
	$\left\|\ext_\Omega f\right\|_{H^\beta(\mathbb{R}^n)}\leq c_{\ext}\left\|f\right\|_{H^\beta(\Omega)}$, $f\in H^\beta(\Omega)$,
	with a constant $c_{\ext}>0$ depending only on $n$, $\varepsilon$ and $\beta$. 
\end{proposition}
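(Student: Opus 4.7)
The plan is to combine the universal Jones--Rogers extension operator for $(\varepsilon,\infty)$-domains with complex interpolation of Bessel potential spaces. By \cite{JONES-1981} (see also \cite{Rogers}), any bounded $(\varepsilon,\infty)$-domain admits a single linear map $E\colon L^1_{\loc}(\Omega)\to L^1_{\loc}(\mathbb{R}^n)$ which, for every integer $k\ge 0$ and every $1\le p\le\infty$, restricts to a bounded extension operator $W^{k,p}(\Omega)\to W^{k,p}(\mathbb{R}^n)$ with norm bounded in terms of $n$, $\varepsilon$, $k$, $p$ alone. In particular, for $p=2$ and $k\in\{0,1\}$, the same operator $E$ is bounded both on $L^2=H^0$ and on $H^1=W^{1,2}$, with norms controlled by $n$ and $\varepsilon$ only. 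This already gives the endpoint cases $\beta=0$ and $\beta=1$ of the proposition.

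For the fractional range $0<\beta<1$ I would appeal to complex interpolation. On the full space one has the classical identification $[L^2(\mathbb{R}^n),H^1(\mathbb{R}^n)]_\beta = H^\beta(\mathbb{R}^n)$ with equivalent norms, see \cite[Section~2.4.2]{Triebel78}. For the domain, since $\Omega$ is a $W^{1,2}$-extension domain, the operator $E$ together with the natural restriction map $r\colon H^\beta(\mathbb{R}^n)\to H^\beta(\Omega)$ (well defined from the statement's definition of $H^\beta(\Omega)$) forms a simultaneous coretract-retract on the scale $(L^2,H^1)$, since $r\circ E=\mathrm{id}$. The standard retract argument of interpolation theory (see e.g.\ \cite[Section~1.2.4]{Triebel78}) then transfers the interpolation identity to the domain and yields
\[
[L^2(\Omega),H^1(\Omega)]_\beta \;=\; H^\beta(\Omega)
\]
with equivalent norms, where $H^\beta(\Omega)$ is interpreted as in the statement.

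With these identifications in place, the complex interpolation principle applied to $E$ produces a bounded operator $E\colon H^\beta(\Omega)\to H^\beta(\mathbb{R}^n)$ with
\[
\|E\|_{H^\beta(\Omega)\to H^\beta(\mathbb{R}^n)}\;\le\; C\,\|E\|_{L^2\to L^2}^{1-\beta}\,\|E\|_{H^1\to H^1}^{\beta},
\]
where $C$ absorbs the norm-equivalence constants from the two interpolation identifications and depends only on $\beta$ (and on $n$, $\varepsilon$ through the retract maps used to obtain the identification on $\Omega$). Setting $\ext_\Omega:=E$ and inserting the Jones--Rogers bounds for the two factors gives a constant $c_{\ext}$ depending only on $n$, $\varepsilon$, $\beta$, as required. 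The fact that the operator $\ext_\Omega$ is the same for every $\beta$ is automatic, since we use a single underlying $E$.

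The main obstacle is the interpolation identification $[L^2(\Omega),H^1(\Omega)]_\beta = H^\beta(\Omega)$: while classical for Lipschitz boundaries (where it appears explicitly in Triebel), for the broader $(\varepsilon,\infty)$-class it is not a citable off-the-shelf fact, and one has to verify it by transferring the $\mathbb{R}^n$-identification through the retract-coretract pair $(E,r)$ supplied by Jones' operator. Care is needed to check that all constants in this transfer remain controlled by $n$, $\varepsilon$, and $\beta$ alone, so that the final norm bound for $\ext_\Omega$ does not develop a hidden dependence on further features of $\Omega$.
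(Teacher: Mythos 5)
Your proposal is correct and follows essentially the same route as the paper: the paper also invokes the universal extension operator of Rogers (Theorem~8 there), which is bounded simultaneously on $L^2(\Omega)$ and $W^{1,2}(\Omega)$ with constants depending only on $n$ and $\varepsilon$, and then uses the interpolation identification of $H^\beta(\Omega)$ between $L^2(\Omega)$ and $H^1(\Omega)$ obtained by the retract argument you describe (citing the arguments of Triebel's 2002 Theorem~2.13 for that step). The obstacle you flag at the end is precisely the point the paper resolves by that reference, so your argument is complete once that transfer is carried out as you indicate.
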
 
\begin{proof}
	As in \cite[Theorem 5.8]{CAPITANELLI-2010} this proposition follows from \cite[Theorem 8]{Rogers} and the fact that $H^\beta(\Omega)$ can be obtained by interpolation from $L^2(\Omega)$ and $H^1(\Omega)$, to see this one can follow the arguments used to prove \cite[Theorem 2.13]{Triebel2002}.
\end{proof} 

\begin{remark}\label{R:LukeandJones}
	For a bounded $(\varepsilon,\infty)$-domain $\Omega$ the existence of a bounded linear extension operator from $W^{1,2}(\Omega)$ to $W^{1,2}(\mathbb{R}^n)$ with norm bound depending only on $\varepsilon$ and $n$ follows from \cite[Theorem 1]{JONES-1981}. However, \cite[Theorem 8]{Rogers} allows to use one and the same extension operator for different spaces $W^{k,p}$, what allows interpolation. 
\end{remark} 

We have the following partial generalization of results from \cite{ARFI-2017,ROZANOVA-PIERRAT-2020} and \cite{FarkasJacob2001} on embeddings and trace and extension operators and their compactness.

\begin{theorem}\label{ThGENERIC}
	Let $\Omega$ be a $W^{1,2}$-extension domain. 
	\begin{enumerate}
		\item[(i)] The space $W^{1,2}(\Omega)$ is compactly embedded in $L^2_{\loc}(\Omega)$ (or in $L^2(\Omega)$ if $\Omega$ is bounded). The linear operator $\mathrm{Tr}_\Omega: W^{1,2}(\R^n)\to W^{1,2}(\Omega)$, $\mathrm{Tr}_\Omega f=f|\Omega$, is bounded and has a linear bounded right inverse $E_\Omega: W^{1,2}(\Omega)\to W^{1,2}(\R^n)$. 
		\item[(ii)] Let $\mu$ be a Borel measure with compact support $\supp\mu=\Gamma\subset \overline{\Omega}$ which satisfies ~\eqref{E:upreg} with some $n-2<d\leq n$. Then the operator $\mathrm{Tr}_{\Gamma}: W^{1,2}(\R^n)\to L^2(\Gamma,\mu)$, defined by ~\eqref{E:traceopdef}, is compact. The operator 
		\[\mathrm{Tr}_{\Omega, \Gamma}:=\mathrm{Tr}_{\Gamma}\circ E_{\Omega}:\ W^{1,2}(\Omega)\to L^2(\Gamma,\mu) \] is well defined in the sense that if $u,v\in W^{1,2}(\mathbb{R}^n)$ are such that $u=v$ $\lambda^n$-a.e. in $\Omega$, then $\mathrm{Tr}_{\Gamma}u=\mathrm{Tr}_{\Gamma}v$ $\mu$-a.e. on $\Gamma$, and it is compact. The image $\mathrm{Tr}_{\Omega, \Gamma}(W^{1,2}(\Omega))=\mathrm{Tr}_{\Gamma}(W^{1,2}(\mathbb{R}^n))$ is dense in $L^2(\Gamma,\mu)$. The map 
		\[\varphi\mapsto \left\|\varphi\right\|_{\mathrm{Tr}_{\Gamma}(W^{1,2}(\mathbb{R}^n))}:=\inf_{g\in W^{1,2}(\mathbb{R}^n), \varphi=\mathrm{Tr}_\Gamma g} \left\|g\right\|_{W^{1,2}(\mathbb{R}^n)}\]
		defines a Hilbert norm on $\mathrm{Tr}_{\Gamma}(W^{1,2}(\mathbb{R}^n))$ with respect to which both operators have linear bounded right inverses $H_{\Gamma}: \mathrm{Tr}_{\Gamma}(W^{1,2}(\mathbb{R}^n))\to W^{1,2}(\R^n)$ respectively 
		\[H_{\Gamma,\Omega}:=\mathrm{Tr}_\Omega\circ H_{\Gamma}:\: \mathrm{Tr}_{\Gamma}(W^{1,2}(\mathbb{R}^n))\to W^{1,2}(\Omega).\]
		\item[(iii)] Suppose that $\partial\Omega$ is compact and $\mu$ is a Borel measure with $\supp\mu=\partial\Omega$ which satisfies ~\eqref{E:upreg} with some $n-2<d\leq n$. For all $u\in W^{1,2}(\Omega)$ with $\Delta u\in L^2(\Omega)$ we can define 
		a bounded linear functional $\frac{\del u}{\del n} \in (\mathrm{Tr}_{\partial\Omega}(W^{1,2}(\mathbb{R}^n)))'$ by
		\begin{equation}\label{FracGreen}
		\langle \frac{\del u}{\del n}, 
		\mathrm{Tr}_{\Omega,\partial\Omega} v\rangle_{(\mathrm{Tr}_{\partial\Omega}(W^{1,2}(\mathbb{R}^n)))',\mathrm{Tr}_{\partial\Omega}(W^{1,2}(\mathbb{R}^n))}=\int_\Omega v\Delta u\dx + \int_\Omega \nabla v\cdot \nabla u \dx, 
		\end{equation}
		$v\in W^{1,2}(\Omega)$. Similarly, for any $u\in W^{1,2}(\Omega)$ and $1\leq i\leq n$, we can define a bounded linear functional $u\cdot n_i\in (\mathrm{Tr}_{\partial\Omega}(W^{1,2}(\mathbb{R}^n)))'$ by  
		\begin{equation}\label{FracGreen0}
		\langle u\cdot n_i, 
		\mathrm{Tr}_{\Omega,\partial\Omega} v\rangle_{(\mathrm{Tr}_{\partial\Omega}(W^{1,2}(\mathbb{R}^n)))',\mathrm{Tr}_{\partial\Omega}(W^{1,2}(\mathbb{R}^n))}=\int_\Omega \frac{\partial u}{\partial x_i} v \dx+\int_\Omega u \frac{\partial v}{\partial x_i} \dx, 
		\end{equation}
		$v\in W^{1,2}(\Omega)$.       
	\end{enumerate}
\end{theorem}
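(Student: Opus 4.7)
For part (i), the restriction $\mathrm{Tr}_\Omega$ is bounded by the very definition of the $H^1(\Omega)$-norm, and the right inverse $E_\Omega$ is provided directly by the hypothesis that $\Omega$ is a $W^{1,2}$-extension domain (in the $(\varepsilon,\infty)$-case via Proposition~\ref{P:extensionfracSobo}). I will establish the compact embedding by fixing an arbitrary compact $L\subset\Omega$ and a cutoff $\chi\in C_c^\infty(\mathbb{R}^n)$ with $\chi\equiv 1$ near $L$, then sending $f\in W^{1,2}(\Omega)$ to $\chi\cdot E_\Omega f\in W^{1,2}(\mathbb{R}^n)$; bounded sets in $W^{1,2}(\Omega)$ are thereby mapped into bounded, compactly supported subsets of $W^{1,2}(\mathbb{R}^n)$, to which the classical Rellich--Kondrachov theorem applies and yields a strongly $L^2$-convergent subsequence. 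For bounded $\Omega$ I take $\chi\equiv 1$ on $\overline{\Omega}$ to get compact embedding into $L^2(\Omega)$.

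For part (ii), I identify $W^{1,2}(\mathbb{R}^n)=H^1(\mathbb{R}^n)$ and apply Theorem~\ref{ThTracecheap} with $\beta=1$: the hypothesis $d>n-2$ supplies some $q\in(2,2d/(n-2))$, so that $\mathrm{Tr}_\Gamma:H^1(\mathbb{R}^n)\to L^q(\Gamma,\mu)$ is compact. Since $\mu(\Gamma)<\infty$ by \eqref{E:upreg} and compactness of $\Gamma$, H\"older's inequality gives a continuous embedding $L^q(\Gamma,\mu)\hookrightarrow L^2(\Gamma,\mu)$, and compactness of $\mathrm{Tr}_\Gamma$ into $L^2(\Gamma,\mu)$ follows; then $\mathrm{Tr}_{\Omega,\Gamma}:=\mathrm{Tr}_\Gamma\circ E_\Omega$ is compact as a composition with the bounded $E_\Omega$. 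The main technical obstacle is the well-definedness of $\mathrm{Tr}_{\Omega,\Gamma}$: for $u,v\in W^{1,2}(\mathbb{R}^n)$ agreeing $\lambda^n$-a.e.\ on $\Omega$, I will argue that the quasi-continuous representative $\widetilde{u-v}$ from \eqref{E:traceop} vanishes quasi-everywhere on the open set $\Omega$ and hence, since under $d>n-2$ the measure $\mu$ charges no set of $H^1$-capacity zero (as built into Theorem~\ref{ThTracecheap}), also $\mu$-a.e.\ on $\Gamma\subset\overline{\Omega}$. Density of $\mathrm{Tr}_{\Omega,\Gamma}(W^{1,2}(\Omega))$ in $L^2(\Gamma,\mu)$ follows from density of $C_c^\infty(\mathbb{R}^n)$ in $H^1(\mathbb{R}^n)$ combined with a Stone--Weierstrass argument for the algebra $C(\Gamma)\subset L^2(\Gamma,\mu)$. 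The Hilbert-space norm on the image and the linear bounded right inverse $H_\Gamma$ are delivered by Corollary~\ref{C:harmonicext} with $\beta=1$, and $H_{\Gamma,\Omega}:=\mathrm{Tr}_\Omega\circ H_\Gamma$ inherits boundedness by composition.

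For part (iii), Cauchy--Schwarz shows that the right-hand side of \eqref{FracGreen} defines a bounded linear functional $\ell_u$ on $W^{1,2}(\Omega)$ of norm at most $\|\Delta u\|_{L^2(\Omega)}+\|\nabla u\|_{L^2(\Omega)}$. Using the distributional identity $\langle\Delta u,v\rangle=-\int_\Omega\nabla u\cdot\nabla v\,dx$ for $v\in C_c^\infty(\Omega)$ together with $\Delta u\in L^2(\Omega)$, I obtain $\ell_u\equiv 0$ on $C_c^\infty(\Omega)$ and, by density, on $W^{1,2}_0(\Omega)$. The second delicate step is the identification of $W^{1,2}_0(\Omega)$ with the kernel of $\mathrm{Tr}_{\Omega,\partial\Omega}$, which I will carry out via the same quasi-continuity and capacity argument as in part (ii) applied to $E_\Omega v$; once available, $\ell_u$ descends to a well-defined functional $\tfrac{\partial u}{\partial n}$ on $\mathrm{Tr}_{\partial\Omega}(W^{1,2}(\mathbb{R}^n))$. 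Its boundedness in the quotient norm is witnessed by testing against the canonical representative $v=H_{\Gamma,\Omega}\varphi$ and invoking the bound on $H_{\Gamma,\Omega}$ from (ii). The functional $u\cdot n_i$ in \eqref{FracGreen0} is handled identically, with integrand $\partial_i u\cdot v+u\cdot\partial_i v$ in place of $v\Delta u+\nabla v\cdot\nabla u$ and norm bound proportional to $\|u\|_{W^{1,2}(\Omega)}$. The hardest part throughout parts (ii) and (iii) is the fine interplay between quasi-continuous representatives, Ahlfors upper regularity, and $H^1$-capacity that makes the trace kernel agree with $W^{1,2}_0(\Omega)$ precisely when $d>n-2$.
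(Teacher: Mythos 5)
Most of your outline runs parallel to the paper's argument: the compactness of $\mathrm{Tr}_\Gamma$ into $L^2(\Gamma,\mu)$ via Theorem~\ref{ThTracecheap} with $\beta=1$ and the finiteness of $\mu$, the density via Stone--Weierstrass, and the right inverses $H_\Gamma$, $H_{\Gamma,\Omega}$ via Corollary~\ref{C:harmonicext} are exactly the paper's route; your cutoff-plus-Rellich--Kondrachov argument for (i) is a legitimate, more self-contained alternative to the paper's citation of \cite{ARFI-2017} and \cite{HAJLASZ-2008}. However, there is a genuine gap at precisely the point you call the main technical obstacle: the well-definedness of $\mathrm{Tr}_{\Omega,\Gamma}$. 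Your argument is that if $u=v$ $\lambda^n$-a.e.\ on $\Omega$ then $\widetilde{u-v}=0$ quasi-everywhere on the \emph{open} set $\Omega$, ``and hence'' $\mu$-a.e.\ on $\Gamma\subset\overline{\Omega}$ because $\mu$ charges no capacity-null sets. This inference does not work: in the relevant situation $\Gamma$ lies in $\partial\Omega$ (in part (iii) and in all applications $\Gamma\subset\partial\Omega$), and vanishing q.e.\ on $\Omega$ says nothing about the values of $\widetilde{u-v}$ on $\partial\Omega$; the set of boundary points where $\widetilde{u-v}$ could be nonzero need not have zero capacity, since the averages in \eqref{E:traceop} at a boundary point also see the part of the ball outside $\Omega$, where $u$ and $v$ are unrelated. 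The missing ingredient is the measure-density property of $W^{1,2}$-extension domains: by \cite{HAJLASZ-2008} such $\Omega$ is an $n$-set, i.e.\ $\lambda^n(B(x,r)\cap\Omega)\geq c\,r^n$ for $x\in\overline{\Omega}$ and small $r$, and combining this with the refined Lebesgue-point property of quasi-continuous representatives (q.e.\ point $x$ satisfies $\lim_{r\to0}\frac{1}{\lambda^n(B(x,r))}\int_{B(x,r)}|w-\widetilde{w}(x)|\,dy=0$) one gets $\widetilde{u}(x)=\widetilde{v}(x)$ at q.e.\ $x\in\overline{\Omega}$, and only then does ``$\mu$ charges no capacity-null sets'' finish the job. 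This is exactly what the paper imports from \cite[Theorem~1]{WALLIN-1991} and \cite[Theorem~6.1]{Biegert2009}; without it your proof of (ii) is incomplete.

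The same gap propagates into your part (iii). There you additionally assert that the kernel of $\mathrm{Tr}_{\Omega,\partial\Omega}$ coincides with $W^{1,2}_0(\Omega)$ ``via the same quasi-continuity and capacity argument.'' Beyond resting on the flawed step above, this identification is a substantially stronger claim than what you have tools for: since $\mu$ is only assumed to satisfy the upper bound \eqref{E:upreg}, a set of positive capacity in $\partial\Omega$ can be $\mu$-null, so $\mu$-a.e.\ vanishing of the trace is a priori weaker than q.e.\ vanishing, and the inclusion of the $\mu$-a.e.\ kernel into $\overline{C_c^\infty(\Omega)}$ does not follow from capacity considerations alone. The paper does not prove (or use) this identification; it establishes the correctness of \eqref{FracGreen} following \cite[Proposition~1]{ARFI-2017} and \cite{LANCIA-2002}, with the spectral-synthesis type formula (2.3.7) of \cite{FOT94} doing the work that you attribute to the kernel identification. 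You should either invoke those results, as the paper does, or supply a genuine proof of the vanishing of the right-hand side of \eqref{FracGreen} on the $\mu$-a.e.\ kernel; as written, that step is not justified.
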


\begin{remark}
	The distribution $\frac{\del u}{\del n}$ in ~\eqref{FracGreen} is a generalized normal derivative. 
\end{remark}

\begin{remark}
Even if $\partial\Omega$ is Lipschitz it can make sense to endow it with a measure $\mu$ that satisfies ~\eqref{E:upreg} with maximal possible exponent $n-2<d<n-1$, in this case one allows $\mu$ to have parts singular w.r.t. $\mathcal{H}^{n-1}$.
\end{remark}

\begin{remark}
If $n-2<d\leq s<n$ and $\mu$ satisfies ~\eqref{EqMuDs}, ~\eqref{EqMuLd} and ~\eqref{Eqnormalized}, then by Theorem~\ref{ThTrace} the space $\mathrm{Tr}_{\Gamma}(W^{1,2}(\mathbb{R}^n))$ and the operator $H_\Gamma$ in Theorem~\ref{ThGENERIC} (ii) can be replaced by $B_1^{2,2}(\Gamma)$ and $E_\Gamma$. Under these more restrictive hypotheses Theorem~\ref{ThGENERIC} (iii) holds with $B_1^{2,2}(\partial\Omega)$ in place of $\mathrm{Tr}_{\partial\Omega}(W^{1,2}(\mathbb{R}^n)$, and the same replacement can be made in Corollary~\ref{C:projectright} and Corollary~\ref{C:Dirichletdata} below.
\end{remark}

\begin{proof}
	Statement (i) is a special case of Theorem~2.12 point 2 in~\cite{ARFI-2017}, which generalizes the classical Rellich-Kondrachov theorem. Note that since $\Omega$ is $W^{1,2}$-extension domain, $\Omega$ is an $n$-set and $W_2^1(\Omega)=C_2^1(\Omega)$, see \cite[Theorem~5]{HAJLASZ-2008}, and this is sufficient to conclude the mentioned result in \cite{ARFI-2017}. The first statement (ii) follows from Theorem~\ref{ThTracecheap} and the finiteness of $\mu$. That $\mathrm{Tr}_{\Omega, \Gamma}:W^{1,2}(\Omega)\to L^2(\Gamma,\mu)$ in (ii) is well defined in the stated sense can be seen as in \cite[Theorem 1]{WALLIN-1991} or \cite[Theorem 6.1]{Biegert2009}. Its compactness follows from \cite[Corollary 7.4]{Biegert2009} (see also \cite[Proposition 3]{ROZANOVA-PIERRAT-2020}). The space $\{v|_{\del \Omega} : v \in C_c^\infty(\R ^n)\}$, is uniformly dense in $C(\del \Omega)$ by the Stone-Weierstrass theorem, and $C_c^\infty(\R ^n)$ is dense in $W^{1,2}(\mathbb{R}^n)$, hence $\mathrm{Tr}_{\Gamma}(W^{1,2}(\mathbb{R}^n)$ is dense in $L^2(\Gamma,\mu)$. The last statements follow using Corollary~\ref{C:harmonicext}. For (iii) one can follow the  arguments of \cite[Proposition~1]{ARFI-2017} (originally due to \cite[Theorem.~4.15]{LANCIA-2002}), the correctness of the definition can be concluded using \cite[formula (2.3.7) in Section 2.3]{FOT94}. In a similar manner one can obtain ~\eqref{FracGreen0}, see \cite[Theorem 2.5 and formula (2.17)]{GR86} for the Lipschitz case. 
\end{proof}

Theorem~\ref{ThGENERIC} (iii) and \cite[Definition 7]{ARFI-2017} motivate to define a class of domains suitable to discuss different types of boundary value problems (see also~\cite{ROZANOVA-PIERRAT-2020,DEKKERS-2020}).

\begin{definition} 
	
	\label{DefSAdmis}
	A \emph{$W^{1,2}$-admissible domain} in $\mathbb{R}^n$ is a pair $(\Omega,\mu)$ consisting of a $W^{1,2}$-extension domain $\Omega\subset \R^n$ and a Borel measure $\mu$ with $\supp\mu=\partial\Omega$ which satisfies ~\eqref{E:upreg} with some $n-2<d\leq n$. We call a $W^{1,2}$-admissible domain $(\Omega,\mu)$ bounded if $\Omega$ is bounded.
\end{definition}

Examples of $W^{1,2}$-admissible domains are $C^k$-regular domains ($k\in \N^*$), Lipschitz domains and domains with a $d$-set boundary ($n-2<d<n$) or a boundary composed of different $d$-sets such as the cylindrical von Koch domains in~\cite{LANCIA-2003,LANCIA-2010}.

To discuss boundary value problems on $W^{1,2}$-admissible domains $(\Omega,\mu)$ it is useful to consider $\mathrm{Tr}_{\partial\Omega}(W^{1,2}(\mathbb{R}^n))$ with equivalent scalar products. For any $\varphi \in \mathrm{Tr}_{\partial\Omega}(W^{1,2}(\mathbb{R}^n))$ the function $H_{\partial\Omega,\Omega}(\varphi)\in W^{1,2}(\Omega)$ is the unique minimizer for the Dirichlet energy $\int_\Omega |\nabla v|^2dx$ in the class of all $v\in W^{1,2}(\Omega)$ with $\mathrm{Tr}_{\Omega,\partial\Omega} v=\varphi$ $\mu$-a.e. on $\partial\Omega$. By $\left\|\cdot\right\|_{\mathrm{Tr}_{\partial\Omega}(W^{1,2}(\mathbb{R}^n))}$ we denote the scalar product on $\mathrm{Tr}_{\partial\Omega}(W^{1,2}(\mathbb{R}^n))$ associated with the Hilbert norm in Theorem~\ref{ThGENERIC} (ii) with $\Gamma=\partial\Omega$.

\begin{corollary}\label{C:projectright}
	Let $(\Omega,\mu)$ be a bounded $W^{1,2}$-admissible domain in $\mathbb{R}^n$ and let $\gamma$ be a nonnegative and bounded Borel function on $\partial\Omega$ which is positive on a subset positive $\mu$-measure. Then the bilinear form
	\begin{equation}\label{E:altspB}
	\left\langle \varphi,\psi\right\rangle_{\mathrm{Tr}_{\partial\Omega}(W^{1,2}(\mathbb{R}^n)),\gamma}:=\int_{\Omega}\nabla H_{\partial\Omega,\Omega}(\varphi)\nabla H_{\partial\Omega,\Omega}(\overline{\psi})dx+\int_{\partial\Omega}\gamma\varphi\overline{\psi}d\mu
	\end{equation}
	is an equivalent scalar product on $\mathrm{Tr}_{\partial\Omega}(W^{1,2}(\mathbb{R}^n))$. There is a constant $c>0$ depending only on $d$, $n$, $c_d^A$, the total mass of $\mu$ and $\gamma$ such that
	$\left\|\varphi\right\|_{\mathrm{Tr}_{\partial\Omega}(W^{1,2}(\mathbb{R}^n)),\gamma}\leq c\left\|\varphi\right\|_{\mathrm{Tr}_{\partial\Omega}(W^{1,2}(\mathbb{R}^n))}$, $\varphi\in \mathrm{Tr}_{\partial\Omega}(W^{1,2}(\mathbb{R}^n))$.
\end{corollary}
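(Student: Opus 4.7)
The form $\langle\cdot,\cdot\rangle_{\mathrm{Tr}_{\partial\Omega}(W^{1,2}(\mathbb{R}^n)),\gamma}$ is manifestly sesquilinear, Hermitian and nonnegative. Positive-definiteness is not immediate, because the Dirichlet-energy term only distinguishes $\varphi$ modulo the trace of constants; I obtain it together with the norm equivalence by proving the explicit upper bound in the statement together with a reverse inequality.

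For the upper bound, fix any $g\in W^{1,2}(\mathbb{R}^n)$ with $\mathrm{Tr}_{\partial\Omega} g=\varphi$. The Dirichlet-energy minimality of $H_{\partial\Omega,\Omega}(\varphi)$ recalled in the paragraph preceding Corollary~\ref{C:projectright} gives
\[
\int_\Omega|\nabla H_{\partial\Omega,\Omega}(\varphi)|^2 dx\le \int_\Omega |\nabla g|^2 dx \le \|g\|_{W^{1,2}(\mathbb{R}^n)}^2.
\]
For the boundary term, I apply Theorem~\ref{ThTracecheap} with $\beta=1$ and an admissible $q>2$, followed by H\"older's inequality on $(\partial\Omega,\mu)$ (which introduces the factor $\mu(\partial\Omega)^{1-2/q}$), to obtain $\int_{\partial\Omega}\gamma|\varphi|^2 d\mu\le \|\gamma\|_\infty c_{\mathrm{Tr}}^2 \mu(\partial\Omega)^{1-2/q}\|g\|_{W^{1,2}(\mathbb{R}^n)}^2$. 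Summing the two estimates and taking the infimum over $g$ yields the claim with $c$ depending only on $d$, $n$, $c_d^A$, $\|\gamma\|_\infty$ and $\mu(\partial\Omega)$.

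The reverse direction relies on a Poincar\'e--Friedrichs-type estimate
\[
\|u\|_{L^2(\Omega)}^2 \le C\Bigl(\int_\Omega |\nabla u|^2 dx + \int_{\partial\Omega}\gamma|\mathrm{Tr}_{\Omega,\partial\Omega} u|^2 d\mu\Bigr), \qquad u\in W^{1,2}(\Omega),
\]
which I prove by compactness--contradiction. A normalised failing sequence $(u_m)\subset W^{1,2}(\Omega)$ with $\|u_m\|_{L^2(\Omega)}=1$ and right-hand side tending to zero is bounded in $W^{1,2}(\Omega)$; the compact embedding of Theorem~\ref{ThGENERIC}~(i) combined with $\nabla u_m\to 0$ in $L^2(\Omega)$ yields a $W^{1,2}(\Omega)$-strong subsequential limit $u$, which is a nonzero constant since $\Omega$ is a connected domain. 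Continuity of $\mathrm{Tr}_{\Omega,\partial\Omega}$ and boundedness of $\gamma$ then force $\int_{\partial\Omega}\gamma|\mathrm{Tr}_{\Omega,\partial\Omega} u|^2 d\mu=0$. The trace of a constant $c$ on $\Omega$ equals $c$ $\mu$-a.e., which I verify by extending via $c\chi$ with $\chi\in C_c^\infty(\mathbb{R}^n)$ identically one on $\overline{\Omega}$ (this is a continuous extension with boundary value $c$, so its Bessel-potential trace equals $c$ $\mu$-a.e.), and invoking the well-definedness of $\mathrm{Tr}_{\Omega,\partial\Omega}$ in Theorem~\ref{ThGENERIC}~(ii). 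The positivity of $\gamma$ on a set of positive $\mu$-measure thus forces $c=0$, a contradiction.

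Combining this Poincar\'e inequality with the trivial bound $\int_\Omega|\nabla H_{\partial\Omega,\Omega}(\varphi)|^2 dx\le \|\varphi\|_{\mathrm{Tr}_{\partial\Omega}(W^{1,2}(\mathbb{R}^n)),\gamma}^2$ controls $\|H_{\partial\Omega,\Omega}(\varphi)\|_{W^{1,2}(\Omega)}$ by $\|\varphi\|_{\mathrm{Tr}_{\partial\Omega}(W^{1,2}(\mathbb{R}^n)),\gamma}$ up to a multiplicative constant. Applying the universal extension operator of Proposition~\ref{P:extensionfracSobo} (in its $W^{1,2}$-form, cf.\ Remark~\ref{R:LukeandJones}) then produces $g:=\ext_\Omega H_{\partial\Omega,\Omega}(\varphi)\in W^{1,2}(\mathbb{R}^n)$ with a corresponding norm bound. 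Since $g$ agrees with $H_{\partial\Omega}(\varphi)$ $\lambda^n$-a.e. in $\Omega$, the well-definedness of $\mathrm{Tr}_{\partial\Omega}$ from Theorem~\ref{ThGENERIC}~(ii) delivers $\mathrm{Tr}_{\partial\Omega} g=\mathrm{Tr}_{\partial\Omega} H_{\partial\Omega}(\varphi)=\varphi$, and infimizing in the quotient norm completes the reverse inequality, establishing both the definiteness of the form and the equivalence of scalar products. The main obstacle is the Poincar\'e estimate itself: the rough nature of $\partial\Omega$ precludes classical arguments, and the most delicate step is the identification of the trace of a constant, which ultimately rests on the well-definedness assertion in Theorem~\ref{ThGENERIC}~(ii).
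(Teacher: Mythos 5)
Your proof is correct and takes essentially the same route as the paper: the paper's one-line argument invokes Zeidler's Theorem 21A (whose Step 3 is exactly your compactness--contradiction Friedrichs/Poincar\'e inequality showing the Robin form is an equivalent scalar product on $W^{1,2}(\Omega)$) together with Theorem~\ref{ThTracecheap}, which matches your trace-plus-H\"older upper bound. Your transfer to the trace space via the minimality of $H_{\partial\Omega,\Omega}$, the Jones extension operator and the quotient-norm infimum simply spells out what the paper compresses into ``another application of Theorem~\ref{ThTracecheap}''.
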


\begin{proof} 
	Well known arguments, see \cite[Theorem 21A and Step 3 in its proof on p. 247/248]{Zeidler}, together with Theorem~\ref{ThTracecheap} show that the bilinear form 
	\[\left\langle w,v\right\rangle_{W^{1,2}(\Omega),\gamma}:=\int_{\Omega}\nabla w\nabla \overline{v}\:dx+\int_{\partial\Omega}\gamma\operatorname{Tr}_{\Omega,\partial\Omega}w \operatorname{Tr}_{\Omega,\partial\Omega}\overline{v}d\mu,\quad v,w\in W^{1,2}(\Omega),\]
	is an equivalent scalar product on $W^{1,2}(\Omega)$, and with another application of Theorem~\ref{ThTracecheap} this implies the result.
\end{proof}

We complement Theorem~\ref{ThGENERIC} by results involving a Dirichlet boundary condition, they will be used in Section~\ref{S:shapeOp}. Suppose that $(\Omega,\mu)$ be a bounded $W^{1,2}$-admissible domain and $\Gamma_{\mathrm{Dir}}\subset \partial\Omega$ is a set of positive $\mu$-measure. Then 
\begin{equation}\label{EqVOM}
V(\Omega,\Gamma_{\mathrm{Dir}}):= \{w\in W^{1,2}(\Omega): \operatorname{Tr}_{\Omega,\partial\Omega}w=0 \quad \text{$\mu$-a.e. on $\Gamma_{\mathrm{Dir}}$}\}
\end{equation}
is a closed subspace of $W^{1,2}(\Omega)$. Accordingly, the image $\operatorname{Tr}_{\Omega,\partial\Omega}( V(\Omega,\Gamma_{\mathrm{Dir}}))$ of this space under $\operatorname{Tr}_{\Omega,\partial\Omega}$ is the closed subspace of $\mathrm{Tr}_{\partial\Omega}(W^{1,2}(\mathbb{R}^n))$ consisting of all elements that are zero $\mu$-a.e. on $\Gamma_{\mathrm{Dir}}$.

\begin{corollary}\label{C:Dirichletdata} Let $(\Omega,\mu)$ be a bounded $W^{1,2}$-admissible domain in $\mathbb{R}^n$ and let 
	$\Gamma_{\mathrm{Dir}}$ be a Borel subset of $\partial\Omega$ with $\mu(\Gamma_{\mathrm{Dir}})>0$. 
	\begin{enumerate}
		\item[(i)] The Poincar\'e inequality 
		\begin{equation}\label{E:Poinc}
		\int_\Omega |u|^2 dx\leq C_P(\Omega, \mu, \Gamma_{\mathrm{Dir}})\int_\Omega |\nabla u|^2dx,\quad u\in  V(\Omega,\Gamma_{\mathrm{Dir}}),
		\end{equation}
		holds with a constant $C_P(\Omega,\mu, \Gamma_{\mathrm{Dir}})>0$. 
		\item[(ii)] For all $u\in V(\Omega, \Gamma_{\mathrm{Dir}})$ with $\Delta u\in L^2(\Omega)$ we can define a bounded linear functional $\frac{\partial u}{\partial n}\in (\operatorname{Tr}_{\Omega,\partial\Omega}( V(\Omega,\Gamma_{\mathrm{Dir}})))'$ by a counterpart of ~\eqref{FracGreen} when testing with functions $v\in  V(\Omega,\Gamma_{\mathrm{Dir}})$.
		\item[(iii)] Suppose $\gamma$ is a nonnegative and bounded Borel function on $\partial\Omega$ which is positive on a set of positive $\mu$-measure. Then for any $\varphi\in \mathrm{Tr}_{\partial\Omega}(W^{1,2}(\mathbb{R}^n))$ there is a function $\varphi_{\gamma,\bot} \in \mathrm{Tr}_{\partial\Omega}(W^{1,2}(\mathbb{R}^n))$ such that 
		\[\left\langle \varphi_{\gamma,\bot}, \operatorname{Tr}_{\Omega,\partial\Omega}v\right\rangle_{\mathrm{Tr}_{\partial\Omega}(W^{1,2}(\mathbb{R}^n)),\gamma}=0,\quad v\in V(\Omega,\Gamma_{\mathrm{Dir}}),\] 
		and $\varphi_{\gamma,\bot}=\varphi$ $\mu$-a.e. on $\Gamma_{\mathrm{Dir}}$. Here notation is as in ~\eqref{E:altspB}. 
	\end{enumerate}
\end{corollary}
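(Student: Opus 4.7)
For part (i), I would argue by contradiction. If the inequality fails, pick a sequence $(u_k)_k \subset V(\Omega, \Gamma_{\mathrm{Dir}})$ with $\|u_k\|_{L^2(\Omega)} = 1$ and $\|\nabla u_k\|_{L^2(\Omega)} \to 0$. The Rellich-type compactness of Theorem~\ref{ThGENERIC}(i) gives, after passing to a subsequence, strong convergence $u_k \to u$ in $L^2(\Omega)$; combined with $\|\nabla u_k\|_{L^2(\Omega)} \to 0$, this upgrades to $W^{1,2}$-convergence with $\nabla u = 0$. Connectedness of the domain $\Omega$ forces $u$ to be a nonzero constant, and continuity of the trace operator from Theorem~\ref{ThGENERIC}(ii) places $u$ in the closed subspace $V(\Omega, \Gamma_{\mathrm{Dir}})$. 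But a nonzero constant has nonzero trace $\mu$-a.e.\ on $\Gamma_{\mathrm{Dir}}$, which has positive $\mu$-measure, a contradiction.

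For part (ii), I mirror the construction of Theorem~\ref{ThGENERIC}(iii). For $u \in V(\Omega, \Gamma_{\mathrm{Dir}})$ with $\Delta u \in L^2(\Omega)$ and $v \in V(\Omega, \Gamma_{\mathrm{Dir}})$, set
\[\left\langle \frac{\partial u}{\partial n}, \operatorname{Tr}_{\Omega,\partial\Omega} v \right\rangle := \int_\Omega v\, \Delta u\, dx + \int_\Omega \nabla v \cdot \nabla u\, dx.\]
This depends only on $\operatorname{Tr}_{\Omega,\partial\Omega} v$ because the difference of two lifts of the same boundary value has trace vanishing $\mu$-a.e.\ on all of $\partial\Omega$ and hence lies in the kernel of $\operatorname{Tr}_{\Omega,\partial\Omega}$; in the $W^{1,2}$-extension setting this kernel coincides with $W^{1,2}_0(\Omega)$, on which Green's formula yields vanishing of the right-hand side. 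Continuity of the functional on $\operatorname{Tr}_{\Omega,\partial\Omega}(V(\Omega, \Gamma_{\mathrm{Dir}}))$ then follows from Cauchy-Schwarz together with the existence of a bounded right inverse obtained by restricting $H_{\partial\Omega,\Omega}$ to boundary traces that vanish on $\Gamma_{\mathrm{Dir}}$, just as in Theorem~\ref{ThGENERIC}(iii).

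For part (iii), exploit the Hilbert space structure. By Corollary~\ref{C:projectright}, $\langle \cdot, \cdot \rangle_{\mathrm{Tr}_{\partial\Omega}(W^{1,2}(\mathbb{R}^n)),\gamma}$ is an equivalent inner product on $\mathrm{Tr}_{\partial\Omega}(W^{1,2}(\mathbb{R}^n))$. Since $V(\Omega, \Gamma_{\mathrm{Dir}})$ is closed in $W^{1,2}(\Omega)$ and $\operatorname{Tr}_{\Omega,\partial\Omega}$ has a bounded right inverse by Theorem~\ref{ThGENERIC}(ii), its image $\operatorname{Tr}_{\Omega,\partial\Omega}(V(\Omega, \Gamma_{\mathrm{Dir}}))$ is a closed subspace of the trace space. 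Given $\varphi$, I would decompose it orthogonally with respect to $\langle \cdot, \cdot \rangle_{\mathrm{Tr}_{\partial\Omega}(W^{1,2}(\mathbb{R}^n)),\gamma}$ as $\varphi = \varphi_V + \varphi_{\gamma,\bot}$ with $\varphi_V$ in that subspace and $\varphi_{\gamma,\bot}$ in its orthogonal complement. The required orthogonality identity holds by construction, and since every element of $\operatorname{Tr}_{\Omega,\partial\Omega}(V(\Omega, \Gamma_{\mathrm{Dir}}))$ vanishes $\mu$-a.e.\ on $\Gamma_{\mathrm{Dir}}$, so does $\varphi_V$, whence $\varphi_{\gamma,\bot} = \varphi$ $\mu$-a.e.\ on $\Gamma_{\mathrm{Dir}}$.

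The most delicate step is the well-definedness in (ii): identifying the zero-trace subspace of $W^{1,2}(\Omega)$ with $W^{1,2}_0(\Omega)$ and justifying Green's formula for a general $W^{1,2}$-extension domain, possibly with a rough $\partial\Omega$, requires invoking the density and approximation results underlying the trace operator from Theorem~\ref{ThGENERIC}. Parts (i) and (iii) are routine Hilbert space manipulations once this identification is available.
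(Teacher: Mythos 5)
Your argument follows the paper's proof in all three parts: (i) is the standard normalization--compactness--contradiction argument that the paper delegates to the cited reference, (ii) repeats the construction of Theorem~\ref{ThGENERIC}(iii) restricted to $V(\Omega,\Gamma_{\mathrm{Dir}})$ (with the same reliance on identifying the zero-trace fiber, which the paper also only sketches via the citation to \cite[(2.3.7)]{FOT94}), and (iii) is exactly the orthogonal projection onto the complement of $\operatorname{Tr}_{\Omega,\partial\Omega}(V(\Omega,\Gamma_{\mathrm{Dir}}))$ with respect to the $\gamma$-inner product that the paper uses. The one imprecision is your justification of closedness of that image in (iii): a bounded right inverse alone does not send closed subspaces to closed images in general; either note that $H_{\partial\Omega,\Omega}$ maps traces vanishing on $\Gamma_{\mathrm{Dir}}$ back into $V(\Omega,\Gamma_{\mathrm{Dir}})$, or use the paper's characterization of the image as the set of traces vanishing $\mu$-a.e.\ on $\Gamma_{\mathrm{Dir}}$, which is closed by continuity of the embedding of the trace space into $L^2(\partial\Omega,\mu)$.
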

\begin{proof}
	The proof of ~\eqref{E:Poinc} is standard, see for instance \cite[Proposition 7.1]{Egert2015}, the second statement follows like ~\eqref{FracGreen}, and in the third we can take $\varphi_{\gamma,\bot}$ to be the orthogonal projection in $(\mathrm{Tr}_{\partial\Omega}(W^{1,2}(\mathbb{R}^n)), \left\langle\cdot,\cdot\right\rangle_{\mathrm{Tr}_{\partial\Omega}(W^{1,2}(\mathbb{R}^n),\gamma})$ onto the orthogonal complement of $\operatorname{Tr}_{\Omega,\partial\Omega}( V(\Omega,\Gamma_{\mathrm{Dir}}))$.
\end{proof}

\section{Mosco convergence of energy functionals}\label{S:Mosco} 

We consider energy functionals and prove their Mosco convergence, \cite{MOSCO94}, along a convergent sequence of domains. As always, we assume $n\geq 2$.

Suppose that $A$, $B$ and $C$ are positive constants, $D\subset \mathbb{R}^n$ is a bounded Lipschitz domain, $\Omega$ an $(\varepsilon,\infty)$-domain contained in $D$ and $\mu$ is a finite Borel measure with $\Gamma=\supp\mu\subset \overline{\Omega}$ and satisfying ~\eqref{E:upreg} with $n-2<d\leq n$. We define an \emph{energy functional}  $J(\Omega,\mu)$ on $L^2(D)$ by  
\begin{multline}\label{extJ--} 
J(\Omega,\mu)(v)  
=\begin{cases}
A\int\limits_\Omega |v|^2\dx+B\int\limits_\Omega |\nabla v|^2 \dx+C\int\limits_{\Gamma} |\operatorname{Tr}_{\Omega, \Gamma} 
v|^2 d\mu, \ v|_{\Omega}\in W^{1,2}(\Omega) ,\\
+\infty  , \ \ \ v|_{\Omega}\notin W^{1,2}(\Omega).
\end{cases}
\end{multline} 
\begin{remark}\label{R:sectionsareconnected} 
	If $\Gamma\subset \partial\Omega$, then ~\eqref{FracGreen} implies that $J(\Omega,\mu )$ is minimized by the weak solutions $v$, in the sense of testing with elements of $W^{1,2}(\Omega)$, of the Robin problem $B\Delta v = A v$ in $\Omega$ and $B\frac{\partial v}{\partial n} +C\mathds{1}_{\Gamma} \operatorname{Tr}_{\Omega, \partial\Omega} 
	v=0$ on $\partial\Omega$, cf. \cite[Section 22.2g]{Zeidler} or also \cite{CAPITANELLI-2010}. In the next section we will discuss a mixed boundary value problem for the Helmholtz equation. In the case of zero Dirichlet and Robin data the (acoustic) energy \eqref{E:remarkenergy} of the solution to this problem is of a form somewhat similar to ~\eqref{extJ--}, which could be viewed as an equivalent inner product on $W^{1,2}(\Omega)$.
\end{remark} 

Recall that a sequence $(I_m)_m$ of quadratic functionals $I_m:L^2(D)\to [0,+\infty]$ \emph{converges to} a quadratic functional \emph{$I:L^2(D)\to [0,+\infty]$ in the sense of Mosco} in $L^2(D)$ if 

\begin{enumerate}
	\item we have $\varliminf_{m\to \infty} I_m(u_m)\ge I(u)$ for every sequence $(u_m)_{m\in \N^*}$ converging weakly to $u$ in $L^2(D)$, 
	\item for every $u\in L^2(D)$ there exists a sequence $(u_m)_{m\in \N}$ converging strongly in $L^2(D)$ such that $\varlimsup_{m\to \infty} I_m(u_m)\le I(u)$, 
\end{enumerate}
see \cite[Definition 2.1.1]{MOSCO94}.

\begin{remark} The convergence of a sequence of quadratic functionals in the sense of Mosco, \cite{MOSCO94}, implies their Gamma-convergence, \cite{Braides}. Originally convergence in the sense of Mosco was formulated for real Hilbert spaces, \cite{MOSCO94}, but the extension to extended real-valued functionals on complex Hilbert spaces is straightforward.
\end{remark}

The main result of this section is the following.

\begin{theorem}\label{T:Mosco} 
	Let $D\subset \mathbb{R}^n$ be a bounded Lipschitz domain and 
	$\varepsilon>0$. Let $\Omega_m\subset D$ be uniformly bounded $(\varepsilon,\infty)$-domains and
	$\mu_m$ finite Borel measures with $\Gamma_m=\supp\mu_m\subset \overline{\Omega}_m$, all satisfying ~\eqref{E:upreg} with $n-1\leq d\leq n$ and the same constant. For each $m$, let $J(\Omega_m,\mu_m)$ be as in ~\eqref{extJ--} but with $\Omega_m$, $\mu_m$ in place of $\Omega$, $\mu$. 

If $\lim_m \Omega_m=\Omega$ in the Hausdorff sense and in the sense of characteristic functions and $\lim_m \mu_m=\mu$ weakly, then we have	
\begin{equation}\label{E:Moscolimit}
		\lim_m J(\Omega_m,\mu_m)=J(\Omega,\mu).
\end{equation}
 in the sense of Mosco.
\end{theorem}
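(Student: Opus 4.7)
The plan is to verify the two Mosco conditions separately, treating the decomposition of $J(\Omega,\mu)$ into bulk $L^2$-, Dirichlet-, and trace-contributions. The key technical tool is the uniform Jones--Rogers extension operator $\ext_\Omega$ from Proposition~\ref{P:extensionfracSobo}, whose operator norm depends only on $n$ and $\varepsilon$; this converts questions involving variable domains $\Omega_m$ into weak convergence of extensions in the fixed ambient space $W^{1,2}(\R^n)$.

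For the liminf inequality, take $u_m\rightharpoonup u$ weakly in $L^2(D)$ and assume $\liminf_m J(\Omega_m,\mu_m)(u_m)<\infty$, else there is nothing to prove. Along a subsequence realizing the liminf, the extensions $\tilde u_m:=\ext_{\Omega_m}(u_m|_{\Omega_m})$ are uniformly bounded in $W^{1,2}(\R^n)$, so along a further subsequence $\tilde u_m\rightharpoonup \tilde u$ in $W^{1,2}(\R^n)$ and $\tilde u_m\to \tilde u$ strongly in $L^2(D)$ by Rellich--Kondrachov. Since the characteristic function convergence yields $\mathds{1}_{\Omega_m}\to\mathds{1}_\Omega$ strongly in $L^2(D)$, passing to the limit in $\mathds{1}_{\Omega_m}u_m=\mathds{1}_{\Omega_m}\tilde u_m$ identifies $u=\tilde u$ almost everywhere on $\Omega$, so in particular $u|_\Omega\in W^{1,2}(\Omega)$. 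The weak convergences $\mathds{1}_{\Omega_m}u_m\rightharpoonup \mathds{1}_\Omega u$ and $\mathds{1}_{\Omega_m}\nabla \tilde u_m\rightharpoonup\mathds{1}_\Omega\nabla\tilde u$ in $L^2(D)$, combined with weak lower semicontinuity of the $L^2$-norm, deliver the bulk liminf inequalities for the $L^2$- and gradient terms. For the trace, the hypothesis $d\ge n-1$ permits selecting $\beta\in((n-d)/2,1)$; by compactness of $W^{1,2}\hookrightarrow H^\beta$ on bounded sets, a cutoff $\chi\tilde u_m$ (with $\chi\in C_c^\infty(\R^n)$ and $\chi\equiv 1$ on a neighborhood of $\overline D$) converges strongly to $\chi\tilde u$ in $H^\beta(\R^n)$, whence Theorem~\ref{L:traceconvergence} gives
\[\lim_m \int_{\Gamma_m}|\operatorname{Tr}_{\Gamma_m}\tilde u_m|^2\,d\mu_m=\int_\Gamma |\operatorname{Tr}_\Gamma \tilde u|^2\,d\mu,\]
which equals $\int_\Gamma|\operatorname{Tr}_{\Omega,\Gamma}u|^2\,d\mu$ because the intrinsic trace is independent of the choice of extension.

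For the recovery sequence, if $J(\Omega,\mu)(u)<\infty$ I would let $\tilde u:=\ext_\Omega(u|_\Omega)\in W^{1,2}(\R^n)$ and define $u_m:=\mathds{1}_{\Omega_m}\tilde u+\mathds{1}_{D\setminus\Omega_m}u$. The difference $u_m-u$ vanishes outside $\Omega_m\triangle\Omega$, whose Lebesgue measure tends to zero by the characteristic function convergence, and absolute continuity of the integral yields $u_m\to u$ strongly in $L^2(D)$. The same argument delivers $\int_{\Omega_m}|\tilde u|^2\,dx\to\int_\Omega|u|^2\,dx$ and $\int_{\Omega_m}|\nabla\tilde u|^2\,dx\to\int_\Omega|\nabla u|^2\,dx$, while the trace term converges by Theorem~\ref{L:traceconvergence} applied to the constant sequence $\tilde u$; hence $\lim_m J_m(u_m)=J(\Omega,\mu)(u)$. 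The case $J(\Omega,\mu)(u)=+\infty$ is handled by taking $u_m\equiv u$.

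The main obstacle is the trace liminf. Weak $L^2$-convergence of the $u_m$ is too weak to pass directly to boundary integrals, while Theorem~\ref{L:traceconvergence} requires strong convergence in a fractional Sobolev space $H^\beta$ with $\beta>(n-d)/2$. The argument hinges on combining the uniform Jones--Rogers extension, which produces a weakly convergent sequence in the fixed ambient space $W^{1,2}(\R^n)$, with the strict inequality $d>n-2$ (ensured by $d\ge n-1$) to stay above the trace threshold and thereby activate the compact embedding $W^{1,2}\hookrightarrow H^\beta$.
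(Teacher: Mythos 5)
Your proof is correct and follows essentially the same route as the paper's: uniform Jones--Rogers extensions to move everything into the fixed space $W^{1,2}(D)$, weak lower semicontinuity for the bulk terms, the compact embedding $W^{1,2}\hookrightarrow H^\beta$ with $(n-d)/2<\beta<1$ feeding into Theorem~\ref{L:traceconvergence} for the trace term, and an essentially constant recovery sequence built from $\ext_\Omega u$. The only point you leave implicit is that the limit functional is well defined, i.e.\ that $\Gamma=\supp\mu\subset\overline{\Omega}$ (needed for $\operatorname{Tr}_{\Omega,\Gamma}$ to make sense and be extension-independent), which the paper settles first via $\Gamma_m\subset\overline{\Omega}_m$, Theorem~\ref{T:epsinftystable} and Hausdorff convergence; your use of Rellich--Kondrachov in place of the paper's Banach--Saks argument and your modification of the recovery sequence off $\Omega_m$ are harmless cosmetic variations.
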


\begin{remark}  
	For shape admissible domains, Definition~\ref{DefShapeAdmis}, this can be combined with Theorem~\ref{T:compact} (ii). 
\end{remark}

\begin{proof} [Proof of Theorem~\ref{T:Mosco}]
	Note that $J(\Omega,\mu )$ is well-defined: By Theorem~\ref{T:epsinftystable} $\Omega\subset D$ is an $(\varepsilon,\infty)$-domain, and $\Gamma:=\supp\mu$ is contained in the Hausdorff limit $\lim_m \Gamma_m$, which by \cite[2.2.3.2 and Theorem 2.2.25]{HENROT-e} is a subset of $\overline{\Omega}$. 
	
	Let $(u_m)_m\subset L^2(D)$ be a sequence converging to $u$ weakly in $L^2(D)$ and $(u_{m_k})_k\subset (u_m)_m$ such that $\varliminf_m J(\Omega_m, \mu_m)(u_m)=\lim_k  J(\Omega_{m_k}, \mu_{m_k} )(u_{m_k})$.
	We will show that
	\begin{equation}\label{Eqliminf-}
	\lim_k  J(\Omega_{m_k}, \mu_{m_k})(u_{m_k})\geq J(\Omega,\mu)(u),
	\end{equation}
	what then implies the first condition in the definition of Mosco convergence. 
	
	We may assume the left hand side of ~\eqref{Eqliminf-} is finite, hence we can find a subsequence, which for simplicity we still denote by $(u_{m_k})_k$, such that  
	$u_{m_k}\in W^{1,2}(\Omega_{m_k})$ for all $k$ and $\sup_k \|u_{m_k}\|_{W^{1,2}(\Omega_{m_k})}<+\infty$. Since $\mathds{1}_{\Omega_m}\to\mathds{1}_{\Omega}$ in $L^2(D)$ as $m\to\infty$ we may assume that $\mathds{1}_{\Omega_{m_k}}\to \mathds{1}_{\Omega}$ $\lambda^n$-a.e. on $D$ as $k\to \infty$. Since all $\Omega_m$ are bounded $(\varepsilon,\infty)$-domains with the same $\varepsilon$, Proposition~\ref{P:extensionfracSobo} and Remark~\ref{R:LukeandJones} ensure the existence of a constant $c_{\ext}>0$ independent of $k$ such that
	\begin{equation}\label{E:unibound}
	\|\ext_{\Omega_{m_k}} u_{m_k}\|_{W^{1,2}(D)}\le c_{\ext}\|u_{m_k}\|_{W^{1,2}(\Omega_{m_k})}.
	\end{equation}
	We endow $L^2(D)\times L^2(D,\mathbb{R}^n)$ with the Hilbert space norm 
	\begin{equation}\label{E:productnorm}
	(v,w)\mapsto \left\|(v,w)\right\|_{A,B}:=\left(A\int_D |v|^2\dx+B\int_D|w|^2\dx\right)^{1/2}.
	\end{equation}
	Then $\left\|v\right\|_{W^{1,2}(D),A,B}:=\left\|(v,\nabla v)\right\|_{A,B}$ is an equivalent Hilbert space norm on $W^{1,2}(D)$. Since by ~\eqref{E:unibound} the sequence 
	\begin{equation}\label{E:thesequence}
	((\ext_{\Omega_{m_k}} u_{m_k},\nabla \ext_{\Omega_{m_k}} u_{m_k})  )_k
	\end{equation}
	is seen to be bounded in $L^2(D)\times L^2(D,\mathbb{R}^n)$ with respect to ~\eqref{E:productnorm},  we may, passing to further subsequences if necessary, assume that $(\ext_{\Omega_{m_k}} u_{m_k})_k$ converges to some $u^*$ weakly in $W^{1,2}(D)$ w.r.t. $\left\|\cdot\right\|_{W^{1,2}(D),A,B}$, and by the Banach-Saks theorem, \cite[Section 38]{RieszSzNagy1956}, the Ces\`aro means $\frac{1}{N}\sum_{k=1}^N  \ext_{\Omega_{m_k}} u_{m_k}$ converge to $u^\ast$ strongly in $W^{1,2}(D)$. We may similarly assume that \eqref{E:thesequence} converges to some $(v^\ast,w^\ast)$ weakly in $L^2(D)\times L^2(D,\mathbb{R}^n)$, what implies weak convergence for the individual factors. Together with the preceding, this shows that $v^\ast=u^\ast$ and $w^\ast=\nabla u^\ast$.
The $\lambda^n$-a.e. convergence of characteristic functions allows to conclude that $\mathds{1}_{\Omega_{m_k}}\ext_{\Omega_{m_k}} u_{m_k}\to \mathds{1}_\Omega u^\ast$ weakly in $L^2(D)$, and we similarly have $\mathds{1}_{\Omega_{m_k}}\ext_{\Omega_{m_k}} u_{m_k}\to \mathds{1}_\Omega u$ weakly in $L^2(D)$ by the initial assumptions on $(u_m)_m$ and $u$. Combining, we see that $u^\ast|_\Omega=u|_\Omega$ $\lambda^n$-a.e. Therefore
	\[\lim_k \:(\mathds{1}_{\Omega_{m_k}}\ext_{\Omega_{m_k}} u_{m_k},\mathds{1}_{\Omega_{m_k}}\nabla \ext_{\Omega_{m_k}} u_{m_k})=(\mathds{1}_{\Omega}u,\mathds{1}_{\Omega}\nabla u)\]
	weakly in $L^2(D)\times L^2(D,\mathbb{R}^n)$ w.r.t. ~\eqref{E:productnorm}, and as a consequence, 
	\begin{equation}\label{E:liminfofintegrals}
	\varliminf_k  \left\lbrace A\int_{\Omega_{m_k}}|u_{m_k}|^2\dx+B\int_{\Omega_{m_k}}|\nabla u_{m_k}|^2\dx \right\rbrace \geq A\int_{\Omega}|u|^2\dx+B\int_{\Omega}|\nabla u|^2\dx.
	\end{equation}
		
	Let $\frac12<\beta<1$. There is a linear extension operator $\ext_D:H^{\beta}(D)\rightarrow H^{\beta}(\mathbb{R}^n)$ such that with $v_{k}:=\ext_{\Omega_{m_k}} u_{m_k}$ we have 
	\begin{equation}\label{E:compactnesstrick}
	\Vert \ext_D v_{k}- \ext_D u^\ast \Vert_{H^{\beta}(\mathbb{R}^n)}\leq c_{\ext,D} \Vert v_{k}-u^\ast\Vert_{H^{\beta}(D)},
	\end{equation}
	with a constant $c_{\ext,D}>0$, as follows from  Proposition~\ref{P:extensionfracSobo}. Since the embedding of $H^1(D)=W^{1,2}(D)$ in $H^{\beta}(D)$ is compact, see for instance \cite[Theorem 2.7]{Triebel2002}, this goes to zero as $k\to \infty$.

	For the remaining proof we write $v$ to denote the $W^{1,2}(\mathbb{R}^n)$-quasi-continuous modification $\widetilde{v}$ (defined as in  ~\eqref{E:traceop}) of a function $v\in W^{1,2}(\mathbb{R}^n)$. Since $n-1\leq d$ we may apply Lemma~\ref{L:traceconvergence}, ~\eqref{E:compactnesstrick} and the fact that $\Gamma_{m_k}\subset \overline{\Omega}_{m_k}$ and $\Gamma\subset \overline{\Omega}$ to obtain 
	\begin{equation}\label{EqTrGoal-}
	\lim_k  \int_{\Gamma_{m_k}}|
	v_{m_k}|^2 d\mu_{m_k} =\int_{\Gamma}|
	\ext_\Omega u^\ast|^2 d\mu =\int_{\Gamma}|
	\ext_\Omega u|^2 d\mu,
	\end{equation}     
	and combining ~\eqref{E:liminfofintegrals} and ~\eqref{EqTrGoal-} we obtain ~\eqref{Eqliminf-}.

	To prove the second condition we may assume, without loss of generality, that $u\in W^{1,2}(\Omega)$. We claim that it follows with $u_m=\ext_\Omega u$ for all $m$ 
	that 
	\[\lim_{m\to\infty} J(\Omega_m, \mu_m)(\ext_\Omega u)=J(\Omega, \mu )(\ext_\Omega u).\]
	By dominated convergence we have 
	\begin{multline}
	\lim_m \left\lbrace A\int_{\Omega_m}|\ext_\Omega u|^2dx+B\int_{\Omega_m}|\nabla \ext_\Omega u|^2dx\right\rbrace \notag\\
	= A\int_{\Omega}|\ext_\Omega u|^2dx+B\int_{\Omega}|\nabla \ext_\Omega u|^2dx.
	\end{multline}
	For the last term let $w\in C^\infty(\overline{D})$ be such that $\|\ext_\Omega u - w\|_{W^{1,2}(\Omega)}<\varepsilon$. Then 
	\begin{multline}
	\left|\int_{\Gamma_{m}}|
	\ext_\Omega u|^2 d\mu_{m} -\int_{\Gamma}|
	\ext_\Omega u|^2  d\mu \right|\notag 
	\leq \left|\int_{\Gamma_{m}}|
	\ext_\Omega u|^2 d\mu_{m} -\int_{\Gamma_m}|w|^2  d\mu_m \right|
	\\+ \left|\int_{\Gamma_{m}}|w|^2 d\mu_{m} -\int_{\Gamma}|w|^2  d\mu \right|\notag 
	+\left|\int_{\Gamma}|w|^2  d\mu -\int_{\Gamma_m}|
	\ext_\Omega u|^2  d\mu \right|,
	\end{multline}
	estimates similar as in the proof of Lemma~\ref{L:traceconvergence} show that the first and the third summand on the right hand side are smaller than a constant times $\varepsilon$, and for large $m$ the second summand is small by 
	weak convergence.  
\end{proof}

\begin{remark}
	Following the same arguments one can obtain versions of Theorem~\ref{T:Mosco} if in ~\eqref{extJ--} the space $W^{1,2}(\Omega)$ is replaced by a suitable subspace of $W^{1,2}(\Omega)$. For instance, one can consider $V(\Omega,\Gamma_{\mathrm{Dir}})$, defined as in ~\eqref{EqVOM}.
\end{remark}

\section{Shape optimization for the Helmholtz  boundary valued problem}\label{S:shapeOp}

We consider a mixed boundary valued problem for the Helmholtz equation. In~\cite{optimal}, this problem was studied for domains with Lipschitz or $d$-set boundaries. Here we first establish the well-posedness of the problem for $W^{1,2}$-admissible domains $\Omega$ and then verify the existence of optimal shapes in a   class of shape admissible domains. 

The domain $\Omega$ models a tunnel or chamber whose walls may contain noise sources and reflective obstacles for the propagating waves. More precisely, we assume that the boundary $\del \Omega$ of $\Omega$ has different parts on which Dirichlet, Neumann, or Robin boundary conditions are prescribed. Dirichlet conditions model noise sources, and homogeneous Neumann boundary conditions model reflecting walls. The Robin boundary condition involves a fixed complex coefficient $\alpha=\alpha(\omega)$, \cite[Theorem~4]{optimal}, and models partial reflection and absorption at an acoustically absorbent wall made of porous material. As in the most commonly known shape optimization problems, the Dirichlet and Neumann parts of the boundary are kept fixed. The question is what shape the absorbent wall must have in order to minimize the total acoustical energy for a fixed source and a fixed frequency $\omega>0$.

To formalize the model, suppose that $(\Omega,\mu)$ is a $W^{1,2}$-admissible domain in $\mathbb{R}^n$, $n\geq 2$, whose boundary $\del \Omega=\supp\mu$ is divided into
three disjoint parts, 
\begin{equation}\label{E:boundarydecomp}
\partial \Omega = \Gamma_{\mathrm{Dir}} \cup \Gamma_{\mathrm{Neu}} \cup \Gamma,
\end{equation}
each a Borel set and of positive measure $\mu$. Here $\Gamma_{\mathrm{Dir}}$ and $\Gamma_{\mathrm{Neu}}$ denote the fixed Dirichlet and Neumann parts, respectively, and $\Gamma$ denotes the Robin part which may vary, \cite{optimal}. See Figure~\ref{FigGD}, page \pageref{FigGD}, for an example. We consider the formal problem
\begin{equation}\label{EqHelmholtz}
\left\{ \begin{array}{l} \triangle u+\omega^2  u=f, \quad \text{on $\Omega$},\\
\mathrm{Tr}\:u=g\quad \hbox{on }\Gamma_{\mathrm{Dir}},\quad
\dfrac{\del u}{\del n} =0\quad \hbox{on }\Gamma_{\mathrm{Neu}},\quad \dfrac{\del u}{\del n} +\alpha(\omega)\mathrm{Tr}\: u=\mathrm{Tr}\:h\quad \hbox{on }\Gamma, \end{array} \right.
\end{equation}
where $\omega>0$, $\alpha$ is a complex-valued function continuous on $\overline{\Omega}$ with a strictly positive real part $\mathrm{Re}(\alpha)>0$, corresponding to the reflection at $\Gamma$, and a strictly negative imaginary part $\mathrm{Im}(\alpha)<0$, corresponding to the absorption at $\Gamma$, $f$ is a function on $\Omega$, $g$ is a function on $\Gamma_{\mathrm{Dir}}$ and $h$ a function on $\Omega$ with well-defined trace $\mathrm{Tr}\:h$ on $\Gamma$. Equation ~\eqref{EqHelmholtz} is a frequency version of a time-dependent wave propagation problem. The case $g=0$ was originally studied in \cite{BARDOS-1994}. See \cite[Section 2]{optimal} for a discussion about how ~\eqref{EqHelmholtz} models the absorption of acoustical energy by a porous wall.

To formulate problem ~\eqref{EqHelmholtz} rigorously, suppose that $(\Omega,\mu)$ is a bounded $W^{1,2}$-admissible domain in $\mathbb{R}^n$ and that $\mu|_{\Gamma_{\mathrm{Dir}}}$ satisfies the hypotheses of Theorem~\ref{ThTrace} with $\Gamma_{\mathrm{Dir}}$ in place of $K$. Given $f\in L_2(\Omega)$, $g\in B_1^{2,2}(\Gamma_{\mathrm{Dir}})$ and $h\in W^{1,2}(\Omega)$,
we call $u\in W^{1,2}(\Omega)$ a \emph{weak solution} of~~\eqref{EqHelmholtz} on $(\Omega,\mu)$ if $\mathrm{Tr}_{\Omega,\partial\Omega}u=g$ $\mu$-a.e. on $\Gamma_{\mathrm{Dir}}$ and 
\begin{multline}\label{eqVFHfh}
\int_{\Omega}\nabla u \nabla \bar{v} \dx  - \omega^2\int_{\Omega}u\bar{v} \dx +\int_{\Gamma}\alpha \, \mathrm{Tr}_{\Omega,\partial\Omega}  u \, \mathrm{Tr}_{\Omega,\partial\Omega}\bar{v} \,d \mu\\
=-\int_{\Omega}f\bar{v} \dx  +\int_{\Gamma}\mathrm{Tr}_{\Omega,\partial\Omega} h \, \mathrm{Tr}_{\Omega,\partial\Omega} \bar{v} \, d \mu
\end{multline}
for all $v\in V(\Omega,\Gamma_{\mathrm{Dir}})$. Note that $\frac{\partial u}{\partial n} \in (B_1^{2,2}(\partial\Omega))'$
for a weak solution $u$ of~~\eqref{EqHelmholtz} by ~\eqref{FracGreen}, and by ~\eqref{eqVFHfh} and Corollary~\ref{C:Dirichletdata} we have $\frac{\partial u}{\partial n}=\mathds{1}_{\Gamma}(\mathrm{Tr}_{\Omega,\partial\Omega} h-\alpha \mathrm{Tr}_{\Omega,\partial\Omega}  u)$, seen as an identity in $(\mathrm{Tr}_{\Omega,\partial\Omega}(V(\Omega, \Gamma_{\mathrm{Dir}})))'$, what encodes both the Neumann condition on $\Gamma_{\mathrm{Neu}}$ and the Robin condition on $\Gamma$ in ~\eqref{EqHelmholtz}. 

The following well-posedness result generalizes \cite[Theorem 2.1]{optimal}. 
%

\begin{theorem}\label{ThWPH} 
	Let $\Omega\subset \R^n$ be a bounded $W^{1,2}$-admissible domain with $\del \Omega=\supp\mu$ being the disjoint union ~\eqref{E:boundarydecomp} of three Borel subsets $ \Gamma_{\mathrm{Dir}}$, $\Gamma_{\mathrm{Neu}}$ and $\Gamma$ of positive $\mu$-measure. Suppose that $\Gamma_{\mathrm{Dir}}$ is compact and $\mu|_{\Gamma_{\mathrm{Dir}}}$ satisfies ~\eqref{EqMuDs} and ~\eqref{EqMuLd} with $\Gamma_{\mathrm{Dir}}$ in place of $K$, that $\Gamma$ has nonempty open interior in $\partial\Omega$ and that it has positive distance to $\Gamma_{\mathrm{Dir}}$. Let $\omega>0$ and let $\alpha\in C(\overline{\Omega})$ be such that $\mathrm{Re}(\alpha)>0$ and $\mathrm{Im}(\alpha)<0$. 
	
	Then for any $f\in L^2(\Omega)$, $g\in B_1^{2,2}(\Gamma_{\mathrm{Dir}})$ and $h\in W^{1,2}(\Omega)$ 
	there is a unique weak solution $u$ of the Helmholtz problem~~\eqref{EqHelmholtz} on $(\Omega,\mu)$. Moreover, there is a constant $C>0$, depending only on $\alpha$, $\omega$ and on $C_P(\Omega,\mu,\Gamma_{\mathrm{Dir}})$ from Corollary~\ref{C:Dirichletdata}, such that
	\begin{equation}\label{EqApriori}
	\|u\|_{W^{1,2}(\Omega)}\le C\left(\|f\|_{L^2(\Omega)}+\|g\|_{B_1^{2,2}(\Gamma_{\mathrm{Dir}})}+\|h\|_{W^{1,2}(\Omega)}\right).
	\end{equation}
	In the case $g=0$ the operator $B: L_2(\Omega)\times V(\Omega,\Gamma_{\mathrm{Dir}}) \to V(\Omega,\Gamma_{\mathrm{Dir}})$, $B(f,h)=u$, where $u$ is the weak solution of~~\eqref{EqHelmholtz}, is a compact linear operator.
\end{theorem}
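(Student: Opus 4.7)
The plan is to recast~\eqref{eqVFHfh} as a variational problem on the closed subspace $V:=V(\Omega,\Gamma_{\mathrm{Dir}})$ of $W^{1,2}(\Omega)$ and then apply the Fredholm alternative. First I would reduce to the case of zero Dirichlet data. Since $\Gamma_{\mathrm{Dir}}$ is compact, Remark~\ref{R:triviallowerbound} supplies~\eqref{Eqnormalized} for $\mu|_{\Gamma_{\mathrm{Dir}}}$, so Theorem~\ref{ThTrace}(ii) produces a bounded extension $E_{\Gamma_{\mathrm{Dir}}}:B_1^{2,2}(\Gamma_{\mathrm{Dir}})\to W^{1,2}(\mathbb{R}^n)$ whose trace on $\Gamma_{\mathrm{Dir}}$ equals $g$. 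Setting $G:=E_{\Gamma_{\mathrm{Dir}}}g$, restricted to $\Omega$, and substituting $u=w+G$, the problem becomes: find $w\in V$ with $a(w,v)=\ell_{f,g,h}(v)$ for all $v\in V$, where
\begin{equation*}
a(w,v):=\int_\Omega\nabla w\cdot\nabla\bar v\,\dx-\omega^2\int_\Omega w\bar v\,\dx+\int_\Gamma\alpha\,\mathrm{Tr}_{\Omega,\partial\Omega}w\,\mathrm{Tr}_{\Omega,\partial\Omega}\bar v\,d\mu,
\end{equation*}
and $\ell_{f,g,h}\in V^{*}$ collects the remaining data. Using Theorem~\ref{ThGENERIC}(i)-(ii) and the continuity of $\alpha$ on $\overline\Omega$, the antidual norm of $\ell_{f,g,h}$ is bounded linearly by $\|f\|_{L^2(\Omega)}+\|g\|_{B_1^{2,2}(\Gamma_{\mathrm{Dir}})}+\|h\|_{W^{1,2}(\Omega)}$.

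Next I would split $a=a_0+a_K$ with $a_0(w,v):=\int_\Omega\nabla w\cdot\nabla\bar v\,\dx+\int_\Gamma\alpha\,\mathrm{Tr}_{\Omega,\partial\Omega}w\,\mathrm{Tr}_{\Omega,\partial\Omega}\bar v\,d\mu$ and $a_K(w,v):=-\omega^2\int_\Omega w\bar v\,\dx$. Since $\mathrm{Re}(\alpha)>0$ on $\overline\Omega$, the Poincar\'e inequality of Corollary~\ref{C:Dirichletdata}(i) gives $\mathrm{Re}\,a_0(w,w)\ge c\|w\|_{W^{1,2}(\Omega)}^2$ on $V$, so Lax--Milgram yields a topological isomorphism $A_0:V\to V^{*}$. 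The operator $A_K:V\to V^{*}$ induced by $a_K$ factors through the compact embedding $V\hookrightarrow L^2(\Omega)$ from Theorem~\ref{ThGENERIC}(i) and is therefore compact, so $A:=A_0+A_K$ is Fredholm of index zero.

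The heart of the argument is the injectivity of $A$. Suppose $w\in V$ satisfies $a(w,v)=0$ for all $v\in V$. Testing with $v=w$, the imaginary part together with $\mathrm{Im}(\alpha)\le -c<0$ on the compact set $\overline\Omega$ forces $\mathrm{Tr}_{\Omega,\partial\Omega}w=0$ $\mu$-a.e.\ on $\Gamma$. Hence $w$ is a distributional solution of $\Delta w+\omega^2 w=0$ in $\Omega$ whose Dirichlet trace vanishes on $\Gamma_{\mathrm{Dir}}\cup\Gamma$ and whose weak normal derivative, in the sense of Corollary~\ref{C:Dirichletdata}(ii), vanishes on $\Gamma\cup\Gamma_{\mathrm{Neu}}$. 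Interior elliptic regularity makes $w$ real-analytic in $\Omega$. Extending $w$ by zero across a relatively open piece of $\Gamma$ furnished by the standing hypothesis yields a $W^{1,2}$-solution of the same Helmholtz equation on an enlarged open set which vanishes on a nonempty open subset, and Aronszajn's strong unique continuation theorem then forces $w\equiv 0$. Thus $A$ is invertible, and~\eqref{EqApriori} follows from $\|A^{-1}\|$ combined with the bound on $\ell_{f,g,h}$.

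For the compactness of $B$ when $g=0$, it suffices to check that $(f,h)\mapsto\ell_{f,0,h}$ is a compact linear map $L^2(\Omega)\times V\to V^{*}$: the volume part factors through the adjoint of the compact inclusion $V\hookrightarrow L^2(\Omega)$ from Theorem~\ref{ThGENERIC}(i), and the boundary part through the compact trace $\mathrm{Tr}_{\Omega,\Gamma}:V\to L^2(\Gamma,\mu)$ of Theorem~\ref{ThGENERIC}(ii). Composing with the bounded inverse $A^{-1}$ yields compactness of $B$. The main obstacle throughout is the unique continuation step: the extension-by-zero argument tacitly uses that $\partial\Omega$ is regular enough on the relative interior of $\Gamma$ for the extended function to remain in $W^{1,2}$ of the enlarged set; this is precisely the geometric role of the hypothesis that $\Gamma$ has nonempty open interior in $\partial\Omega$.
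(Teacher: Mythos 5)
Your functional-analytic skeleton (lifting the Dirichlet datum, splitting the form into a coercive part plus a compact perturbation, Fredholm alternative, compactness of $B$ via the compact embeddings of Theorem~\ref{ThGENERIC}) matches the paper's proof, which proceeds via the Poincar\'e inequality, the Riesz representation theorem and the Fredholm alternative and handles $g\neq 0$ by superposition. Your lifting of $g$ by the Jonsson operator $E_{\Gamma_{\mathrm{Dir}}}$ in place of the paper's harmonic lifting $\hat{g}$ from Corollary~\ref{C:Dirichletdata} (iii) is a harmless variation. Taking the imaginary part of $a(w,w)=0$ to kill $\mathrm{Tr}_{\Omega,\partial\Omega}w$ on $\Gamma$, and then observing that the Cauchy data of $w$ vanish on $\Gamma$, is also exactly the right reduction.

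The genuine gap is the step ``extending $w$ by zero across a relatively open piece of $\Gamma$ yields a $W^{1,2}$-solution on an enlarged open set.'' This is the classical proof of Cauchy uniqueness for \emph{Lipschitz} boundaries, and it does not carry over for free to the domains of this theorem: $\Omega$ is only assumed $W^{1,2}$-admissible, so the relative interior of $\Gamma$ may be a fractal set supporting a measure $\mu$ of any dimension $d\in(n-2,n]$. Two things fail to be automatic. First, you need an open set $O\subset\mathbb{R}^n$ with $O\cap\partial\Omega\subset\Gamma$ and $O\setminus\overline{\Omega}\neq\emptyset$; the hypothesis that $\Gamma$ has nonempty interior \emph{relative to} $\partial\Omega$ gives no control on the complement of $\overline{\Omega}$ near $\Gamma$. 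Second, and more seriously, the assertion that the zero extension $\tilde{w}$ lies in $W^{1,2}(O)$ with $\nabla\tilde{w}=\mathds{1}_\Omega\nabla w$ amounts to showing $\int_{\Omega}w\,\partial_i\varphi\,dx+\int_{\Omega}\partial_i w\,\varphi\,dx=0$ for all $\varphi\in C_c^\infty(O)$; by \eqref{FracGreen0} this is $\langle w\cdot n_i,\mathrm{Tr}_{\Omega,\partial\Omega}\varphi\rangle$, and the functional $w\cdot n_i$ is only an abstract element of $(\mathrm{Tr}_{\partial\Omega}(W^{1,2}(\mathbb{R}^n)))'$ --- knowing $\mathrm{Tr}_{\Omega,\partial\Omega}w=0$ $\mu$-a.e.\ on $\Gamma$ does not by itself localize this pairing to zero on test traces supported in $\Gamma$. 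Your closing remark attributes the repair to the hypothesis that $\Gamma$ has nonempty relative interior, but that hypothesis only guarantees that the Cauchy data vanish on a relatively open boundary piece; it supplies no Sobolev regularity for the zero extension. The paper instead invokes the Cauchy uniqueness theorem of Dard\'e--Hannukainen--Hyv\"onen (\cite[Theorem 1.2]{DARDE-2010}) and argues that its proof transfers to the present setting precisely via the generalized normal-component identities \eqref{FracGreen0} and Remark~\ref{R:triviallowerbound} (i); this transfer is the content your argument is missing. A secondary caveat: the Fredholm alternative produces a bounded inverse $A^{-1}$ but gives no control of $\Vert A^{-1}\Vert$ in terms of $\alpha$, $\omega$ and $C_P(\Omega,\mu,\Gamma_{\mathrm{Dir}})$ alone, so \eqref{EqApriori} with a constant depending only on those quantities does not follow from ``$\Vert A^{-1}\Vert$ combined with the bound on $\ell_{f,g,h}$'' without an additional argument.
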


\begin{remark}
	The compactness of $\Gamma_{\mathrm{Dir}}$ and ~\eqref{EqMuDs}, ~\eqref{EqMuLd} for $\mu|_{\Gamma_{\mathrm{Dir}}}$ can be dropped if $B_1^{2,2}(\Gamma_{\mathrm{Dir}})$ is replaced by the orthogonal complement in $\mathrm{Tr}_{\partial\Omega}(W^{1,2}(\mathbb{R}^n))$ of the closed subspace
	$\mathrm{Tr}_{\Omega,\partial\Omega}(V(\Omega,\Gamma_{\mathrm{Dir}}))$, endowed with the minimal energy norm. 
\end{remark}

Theorem~\ref{ThWPH} follows in the same way as \cite[Theorem 2.1]{optimal}: If $g=0$, then, using the Poincar\'e inequality, Theorem~\ref{ThGENERIC}, the Riesz representation theorem and the Fredholm alternative, one obtains unique weak solutions for $h=0$ and $f=0$, respectively, and their sum is the unique weak solution for notrivial $f$ and $h$. This method uses the Cauchy uniqueness shown in \cite[Theorem 1.2]{DARDE-2010} for Lipschitz boundaries, thanks to Remark~\ref{R:triviallowerbound} (i) and ~\eqref{FracGreen0} the proof carries over. 
The case $g\neq 0$ we can deal with by linear superposition: If $\hat{g}$ is the unique element of $W^{1,2}(\Omega)$ such that $\Delta \hat{g}=0$ in $\Omega$, $\mathrm{Tr}_{\Omega,\partial\Omega}\hat{g} =g$ $\mu$-a.e. on $\Gamma_{\mathrm{Dir}}$, $\frac{\partial\hat{g}}{\partial n}=0$ on $\Gamma_{\mathrm{Neu}}$ and $\frac{\partial\hat{g}}{\partial n}+{\mathrm{Re}}(\alpha)\mathrm{Tr}_{\Omega,\partial\Omega}\hat{g} =0$ on $\Gamma$, then $u$ satisfies ~\eqref{eqVFHfh} with given $f$ and $h$ if and only if $u-\hat{g} \in V(\Omega,\Gamma_{\mathrm{Dir}})$ satisfies ~\eqref{eqVFHfh} with 
\begin{equation}\label{E:shifts}
f-\omega^2 \hat{g} \quad \text{and}\quad h-i\:{\mathrm{Im}}(\alpha) \hat{g}.
\end{equation} 
Note that we can always assume $h$ or $h-i\:{\mathrm{Im}}(\alpha) \hat{g}$ to be zero on $\Gamma_{\mathrm{Dir}}$, otherwise we can multiply with a smooth cut-off function.
The function $\hat{g}$ can be obtained using Corollary~\ref{C:Dirichletdata} (iii): If $\mathring{g}$ is an arbitrary extension of $g$ to an element of $\mathrm{Tr}_{\partial\Omega}(W^{1,2}(\mathbb{R}^n))$ and $\gamma:=\mathds{1}_\Gamma{\mathrm{Re}}(\alpha)$, then $\hat{g}:=H_{\partial\Omega,\Omega}(\mathring{g}_{\gamma,\bot})$ is as stated.


\begin{remark} As a corollary of~Theorem~\ref{ThWPH}, the operator $-\Delta$ associated with the boundary conditions of problem~~\eqref{EqHelmholtz} does not have real eigenvalues.
\end{remark}

The acoustic \emph{energy} associated with the Helmholtz problem ~\eqref{EqHelmholtz} with zero Dirichlet and Robin boundary data $g=0$ and $h=0$, is $\mathcal{E}(\Omega,\mu,u(\Omega, \mu)):=\int_\Omega |u|^2\dx$, where $u$ is the unique weak solution, and for $f=0$ identity ~\eqref{eqVFHfh} allows to rewrite this as
\begin{equation}\label{E:remarkenergy}
\mathcal{E}(\Omega,\mu,u(\Omega, \mu))=\frac{1}{\omega^2}\left(\|\nabla u\|^2_{L^2(\Omega,\mathbb{R}^n)} +\|\sqrt{\mathrm{Re}(\alpha)} \mathrm{Tr}_{\Omega,\partial\Omega}u\|^2_{L^2(\Gamma,\mu)}\right).
\end{equation}

We discuss the shape optimization problem for functionals similar to those introduced in~~\eqref{extJ--}, evaluated for the weak solution of~~\eqref{EqHelmholtz} with suitable given data.  This is more specific than ~\eqref{E:remarkenergy} in the sense that $\alpha$ has to be constant, but more general in the sense that it can have an additional term. 

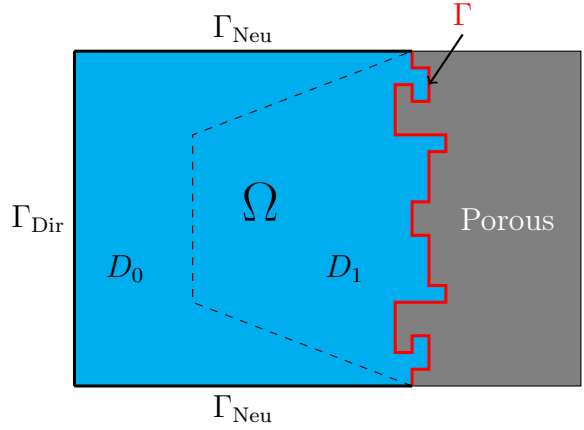
\begin{figure}[!htb]
	\begin{center}
		
		\begin{tikzpicture}[scale=0.222]
		\draw[fill=gray] (0,0) rectangle (30,20);
		\draw[fill=cyan] (20,0)--(0,0)--(0,20)--(20,20)--(20,19)--(21,19)--(21,17)--(20,17)--(20,18)--(19,18)--(19,15)--(22,15)--(22,14)--(21,14)--(21,13)--(21,11)--(20,11)--(20,9)
		--(21,9)--(21,7)--(21,6)--(22,6)--(22,5)--(19,5)--(19,2)--(20,2)--(20,3)--(21,3)--(21,1)--(20,1)--(20,0);
		\draw[red,line width=0.04cm] (20,20)--(20,19)--(21,19)--(21,17)--(20,17)--(20,18)--(19,18)--(19,15)--(22,15)--(22,14)--(21,14)--(21,13)--(21,11)--(20,11)--(20,9)
		--(21,9)--(21,7)--(21,6)--(22,6)--(22,5)--(19,5)--(19,2)--(20,2)--(20,3)--(21,3)--(21,1)--(20,1)--(20,0);
		\draw[thick,->] (23,21)--(21,18);
		\node[above,red] at (23,21) {\large{$\Gamma$}};
		\draw[line width=0.04cm] (0,0)--(20,0);
		\node[below] at (10,0) {\large{$\Gamma_{\mathrm{Neu}}$}};
		\draw[line width=0.04cm] (0,20)--(20,20);
		\node[above] at (10,20) {\large{$\Gamma_{\mathrm{Neu}}$}};
		\draw[line width=0.04cm] (0,0)--(0,20);
		\node[left] at (0,10) {\large{$\Gamma_{\mathrm{Dir}}$}};
		\node at (11,11) {\huge{$\Omega$}};
		\node[left,white] at (29,10) {\large{Porous}};
		\draw [dashed] (20,0) -- (7,5) -- (7,15)  -- (20,20);
		\node at (3,7) {\large{$D_0$}};
		\node at (16,7) {\large{$D_1$}};
		\end{tikzpicture}
		\vspace*{8pt}
		\caption{\label{FigGD} Example of a domain $\Omega$ in $\R^2$, shown in blue, with three types of boundaries: $\Gamma_{\mathrm{Dir}}$ and $\Gamma_{\mathrm{Neu}}$ are fixed and $\Gamma$ can be changed  in the restricted area $\overline{D}_1:=\overline{D}\setminus D_0$. Here $D$ is the large rectangle, which is the union of $\Omega$ filled with the air and of the domain $D\setminus \Omega$ with the porous material. 
		}
	\end{center}
\end{figure}

To define a physically realistic situation, let $D,D_0,D_1\subset \mathbb{R}^n$ be fixed bounded Lipschitz domains, 
such that 
$\overline{D}=\overline{D}_0\cup\overline{D}_1$, 
${D}_0\cap{D}_1=\varnothing$. 
Moreover, we assume that the triple intersection 
$\partial{D}\cap\partial{D}_0\cap\partial{D}_1$ is a $(n-2)$-dimensional Lipschitz sub-manifold of each respective boundary. 
As before we assume that $D_0\subset\Omega\subset D$, and also that $\Gamma_{\mathrm{Dir}}\subset\del D\cap \del D_0$ is a compact non-empty Lipschitz $(n-1)$-dimensional surface disjoint from $\overline{D}_1$, see Figure~\ref{FigGD}. In this set-up we define 
\begin{equation}\label{E:O}
\Gamma:=\del \Omega\cap\overline{D}_1,\qquad\Gamma_{\mathrm{Neu}}:=\del \Omega\setminus (\Gamma\cup \Gamma_{\mathrm{Dir}})=(\del D\cap \del D_0)\setminus   \Gamma_{\mathrm{Dir}}.
\end{equation}
For fixed $\varepsilon>0$, $n-1\leq s<n$ and $n-2<d\leq s$ we write $\hat{U}_{ad}$ for the class of all $(\Omega,\mu) \in U_{ad}(D,D_0,\varepsilon, s, d, \bar c_{s}^A,c_{d}^A)$, where $\Omega$ is as just outlined and $\mu$ is the sum of the $(n-1)$-dimensional Hausdorff measure $\mathcal{H}^{n-1}$ on $\Gamma_{\mathrm{Dir}}\cup \Gamma_{\mathrm{Neu}}$ and a more general measure $\mu_\Gamma$ on $\Gamma$. In Theorem~\ref{ThExistOptimalShape} below we allow $(\Omega,\mu)$ to vary over $\hat{U}_{ad}$, and by the above assumptions this means that we allow $\Gamma$ and $\mu_\Gamma$ to vary.

Let $A\ge 0$, $B\ge 0$ and $C\ge0$. Suppose that 
$(\Omega,\mu)\in \hat{U}_{ad}$, 
$\omega>0$, $\alpha\in C(\overline{D}_1)$ and we are given data $f\in L^2(D)$, $g \in B_1^{2,2}(\Gamma_{\mathrm{Dir}})$ and $h\in W^{1,2}(D_1)$
and
\begin{align} \label{Jen}
& J(\Omega,\mu,u(\Omega,\mu)):=A\int_\Omega |u|^2\dx+B\int_\Omega |\nabla u|^2 \dx+C\int_\Gamma |\mathrm{Tr}_{\Omega,\Gamma}u|^2 d\mu, 
\end{align}
where $u=u(\Omega,\mu)$ denotes the unique weak solution of ~\eqref{eqVFHfh} on $(\Omega,\mu)$ with $f$, $g$, $h$.

\begin{remark}\label{rem-J}  
	To compare to the general form of functionals mentioned in~\cite[p. 156]{HENROT-e}, we point out that one can theoretically consider any objective functional of form
	$$J(\Omega,\mu,u(\Omega, \mu))=\int_\Omega j_1(x,u,\nabla u)\dx+\int_{\del \Omega} j_2(x,\operatorname{Tr}_{\Omega,\Gamma} u)d \mu,$$
	where $j_1: D\times \C\times \C^m \to \R$ is measurable, continuous in $(y,p)$ for almost every $x$ and  such that with a constant $C>0$ we have $|j_1(x,y,p)|\le C(1+|y|^2+|p|^2)$, $x\in D$, $y\in \C$, $p\in \C^n$, and $j_2: \del \Omega \times \C \to \R$ is $\mu$-measurable, continuous in $y$ for almost every $x$ and such that $|j_2(x,y)|\le C(1+|y|^2)$, $x\in \del \Omega$, $y\in \C$.
\end{remark}

We have the following result on the existence of an optimal shape that minimizes $J(\Omega,\mu,u(\Omega,\mu))$ in the class of domains $\hat{U}_{ad}$. 

\begin{theorem}\label{ThExistOptimalShape} Let $\omega>0$ and $\alpha\in C(\overline{D})$. For any $f\in L^2(D)$, $g \in B_1^{2,2}(\Gamma_{\mathrm{Dir}})$ and $h\in W^{1,2}(D)$  there exists an optimal shape $(\Omega_{opt},\mu_{opt})\in \hat{U}_{ad}$
	which minimizes the functional $J(\Omega, \mu, u(\Omega, \mu))$  defined in~\eqref{Jen},
	\begin{equation}\label{E:minimizer}
	J(\Omega_{opt},\mu_{opt},u(\Omega_{opt},\mu_{opt}))=\min_{(\Omega,\mu) \in \hat{U}_{ad}}J(\Omega,\mu,u(\Omega,\mu)).
	\end{equation}
	Moreover, $(\Omega_{opt},\mu_{opt})$ is the limit of a minimizing sequence $(\Omega_m,\mu_m)_{m}\subset \hat{U}_{ad}$ in the Hausdorff sense, the sense of compacts, the sense of characteristic functions and the sense of weak convergence on $\overline{D}$ of the boundary volumes, and the limit $u^\ast=\lim_m \ext_{\Omega_m} u(\Omega_m,\mu_m)$ exists weakly in $W^{1,2}(D)$ and satisfies $u^\ast|_{\Omega_{opt}}=u(\Omega_{opt},\mu_{opt})$.
\end{theorem}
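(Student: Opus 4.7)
The plan is to combine the compactness provided by Theorem~\ref{T:compact}, the a priori bound from Theorem~\ref{ThWPH}, and the Mosco convergence of Theorem~\ref{T:Mosco} along a minimizing sequence. First, I would pick $(\Omega_m,\mu_m)_m \subset \hat{U}_{ad}$ with $\lim_m J(\Omega_m,\mu_m,u(\Omega_m,\mu_m)) = \inf_{\hat{U}_{ad}} J$, and use Theorem~\ref{T:compact} to extract a subsequence converging to some $(\Omega_{opt},\mu_{opt}) \in \hat{U}_{ad}$ in the Hausdorff sense, the sense of compacts, the sense of characteristic functions, and weakly as measures. Writing $u_m := u(\Omega_m,\mu_m)$, I would then establish the uniform bound $\sup_m \|u_m\|_{W^{1,2}(\Omega_m)} < \infty$: the Poincar\'e constants $C_P(\Omega_m,\mu_m,\Gamma_{\mathrm{Dir}})$ from Corollary~\ref{C:Dirichletdata} can be controlled uniformly because all $\Omega_m$ contain the fixed Lipschitz domain $D_0$ and share the fixed compact Dirichlet piece $\Gamma_{\mathrm{Dir}} \subset \partial D_0 \cap \partial D$, while Proposition~\ref{P:extensionfracSobo} provides a universal Sobolev extension bound for $(\varepsilon,\infty)$-domains with the same $\varepsilon$. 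Inserting this into~\eqref{EqApriori} yields the uniform bound, and the extensions $U_m := \ext_{\Omega_m} u_m$ are then uniformly bounded in $W^{1,2}(D)$. Passing to a further subsequence, $U_m \rightharpoonup u^{\ast}$ weakly in $W^{1,2}(D)$ and strongly in $L^2(D)$ by Rellich-Kondrachov.

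The crux of the argument is to identify $u^{\ast}|_{\Omega_{opt}}$ as the weak solution $u(\Omega_{opt},\mu_{opt})$. For any test function $\varphi \in C_c^\infty(\mathbb{R}^n \setminus \Gamma_{\mathrm{Dir}})$, its restriction belongs to $V(\Omega_m,\Gamma_{\mathrm{Dir}})$ and may be used in~\eqref{eqVFHfh} written for $(\Omega_m,\mu_m,u_m)$. The interior terms pass to the limit because $\mathds{1}_{\Omega_m}\to \mathds{1}_{\Omega_{opt}}$ strongly in every $L^p(D)$ while $U_m$ and $\nabla U_m$ converge weakly in $L^2(D)$, so the required weak-strong product convergences apply. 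The boundary integrals $\int_{\Gamma} \alpha\,\mathrm{Tr}_{\Omega_m,\partial\Omega_m}U_m\,\mathrm{Tr}_{\Omega_m,\partial\Omega_m}\overline{\varphi}\,d\mu_m$ and the analogous term involving $h$ pass to the limit by a bilinear variant of Theorem~\ref{L:traceconvergence}, obtained by polarization and the same compact-embedding trick used in the proof of Theorem~\ref{T:Mosco}, together with the weak convergence $\mu_m \to \mu_{opt}$ and the uniform trace bound of Theorem~\ref{ThTracecheap}. The Dirichlet condition $\mathrm{Tr}_{\Omega_{opt},\partial\Omega_{opt}} u^{\ast} = g$ on the fixed piece $\Gamma_{\mathrm{Dir}}$ follows from the compactness of the trace map on the fixed Lipschitz set $\Gamma_{\mathrm{Dir}}$: each $\mathrm{Tr}_{\partial\Omega_m}U_m$ equals $g$ there, and weak convergence in $W^{1,2}(D)$ combined with a compact embedding into an intermediate Bessel-potential space transfers this to $u^{\ast}$. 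The uniqueness statement of Theorem~\ref{ThWPH} then yields $u^{\ast}|_{\Omega_{opt}} = u(\Omega_{opt},\mu_{opt})$, and uniqueness of the weak limit implies that the whole sequence converges, not merely a subsequence.

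To conclude, I would invoke Theorem~\ref{T:Mosco}: since $U_m \rightharpoonup u^{\ast}$ weakly in $L^2(D)$ and $J(\Omega_m,\mu_m)(U_m) = J(\Omega_m,\mu_m,u_m)$ because $U_m|_{\Omega_m}=u_m$, the liminf part of Mosco convergence gives
\[
\inf_{\hat{U}_{ad}} J \;=\; \lim_m J(\Omega_m,\mu_m,u_m) \;\geq\; J(\Omega_{opt},\mu_{opt})(u^{\ast}) \;=\; J\bigl(\Omega_{opt},\mu_{opt},u(\Omega_{opt},\mu_{opt})\bigr),
\]
while the reverse inequality is immediate from $(\Omega_{opt},\mu_{opt}) \in \hat{U}_{ad}$, proving~\eqref{E:minimizer}. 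I expect the principal obstacle to be the identification step: the weak formulation involves integrals over moving domains and over boundaries carrying variable measures, and the test-function space $V(\Omega_m,\Gamma_{\mathrm{Dir}})$ varies with $m$, so the passage to the limit requires the bilinear analogue of Theorem~\ref{L:traceconvergence} and a careful verification that the Dirichlet datum $g$ persists in the limit. A secondary technical point is the derivation of a uniform Poincar\'e estimate across the entire shape-admissible class, which ultimately rests on the fact that $\Gamma_{\mathrm{Dir}}$ and $D_0$ are kept fixed and that the $(\varepsilon,\infty)$-extension operator has a norm depending only on $\varepsilon$ and $n$.
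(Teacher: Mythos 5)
Your proposal is correct in outline and follows the paper's argument for most of its length: minimizing sequence, subsequential limit $(\Omega_{opt},\mu_{opt})\in\hat{U}_{ad}$ via Theorem~\ref{T:compact} (with Lemma~\ref{measure-lemma} for the measures), a uniform Poincar\'e constant plus the uniform extension bound of Proposition~\ref{P:extensionfracSobo} to get $\sup_m\|\ext_{\Omega_m}u_m\|_{W^{1,2}(D)}<\infty$, and identification of $u^\ast|_{\Omega_{opt}}$ as the weak solution by passing to the limit in~\eqref{eqVFHfh} using the convergence of characteristic functions, Rellich--Kondrachov and Theorem~\ref{L:traceconvergence} (polarized). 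The paper does exactly this, and in addition handles the inhomogeneous Dirichlet datum by the superposition~\eqref{E:shifts} rather than by testing directly with $C_c^\infty(\mathbb{R}^n\setminus\Gamma_{\mathrm{Dir}})$; your route additionally needs the (standard but not entirely trivial) density of such restrictions in $V(\Omega_{opt},\Gamma_{\mathrm{Dir}})$, which you should at least mention.

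Where you genuinely diverge is the final step. The paper does not invoke Theorem~\ref{T:Mosco}; it proves the full convergence $\lim_m J(\Omega_m,\mu_m,u_m)=J(\Omega_{opt},\mu_{opt},u^\ast)$ by substituting the solutions themselves into~\eqref{eqVFHfh}, which converts the energy terms into data pairings that converge strongly. You instead use only the liminf half of Mosco convergence plus the trivial reverse inequality $J(\Omega_{opt},\mu_{opt},u(\Omega_{opt},\mu_{opt}))\geq\inf_{\hat{U}_{ad}}J$; this is logically sufficient for~\eqref{E:minimizer} and is arguably leaner. However, be aware that Theorem~\ref{T:Mosco} as stated does not literally cover the setting of~\eqref{Jen}: it assumes $A,B,C$ strictly positive whereas~\eqref{Jen} allows them to vanish, it assumes the scaling exponent satisfies $n-1\leq d$ whereas $\hat{U}_{ad}$ only requires $n-2<d$, and the measures entering the boundary term of~\eqref{Jen} are the restrictions $\mu_m|_{\Gamma_m}$ rather than the full boundary volumes, so their weak convergence must be extracted separately (Banach--Alaoglu plus Lemma~\ref{measure-lemma}, as the paper does). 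None of these is a fatal obstruction --- the liminf inequality you need follows from weak lower semicontinuity of the quadratic terms together with Theorem~\ref{L:traceconvergence}, exactly as in the proof of Theorem~\ref{T:Mosco} --- but you should derive it by those arguments rather than by citing Theorem~\ref{T:Mosco} verbatim. The paper's direct computation of the limit avoids these hypothesis mismatches and yields the slightly stronger statement that the functional values actually converge.
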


Theorem~\ref{ThExistOptimalShape} follows similarly as \cite[Theorem 3.2]{optimal} by a variational convergence argument:  Theorem~\ref{T:compact} for the domains and Banach-Alaoglu and Lemma~\ref{measure-lemma} for the measures $\mu_\Gamma$ imply the existence of 
a subsequential limit $(\Omega_\ast,\mu_\ast)\in \hat{U}_{ad}$ for a minimizing sequence $(\Omega_m,\mu_m)_{m}\subset \hat{U}_{ad}$. The simultaneous validity of Poincar\'e inequalities with the same constant for all $\Omega_m$ (which follows as in \cite[Theorem 6]{DEKKERS-2020} or, alternatively, by modification of the standard proof as in \cite[Proposition 7.1]{Egert2015} or \cite[Section 5.8]{Evans-2010} together with the convergence the sense of characteristic functions)
implies that the extensions $\ext_{\Omega_m}u_m$ of the unique solutions $u_m$ on the $\Omega_m$ are uniformly bounded in $W^{1,2}(D)$ and therefore have a subsequential weak limit $u^\ast$. Using a variational convergence argument based on ~\eqref{eqVFHfh} and an application of Lemma~\ref{L:traceconvergence} similarly as in the proof of Theorem~\ref{T:Mosco}
one can identify $u^\ast|_{\Omega_{\ast}}$ as the unique weak solution on $(\Omega_\ast,\mu_\ast)$. Using superposition as in ~\eqref{E:shifts}, Corollary~\ref{C:projectright} and Theorem~\ref{ThTracecheap} one can see that similar statements are true for the solutions of the corresponding equations with $g=0$ and shifted data $f$ and $h$, and one can then use ~\eqref{eqVFHfh} for these solutions together with the convergence in the sense of characteristic functions, Rellich-Kondrachov for $D$ and Lemma~\ref{L:traceconvergence} to conclude that $\lim_m J(\Omega_m,\mu_m,u_m)=J(\Omega_\ast, \mu_\ast, u^\ast)$, what shows that $\Omega_{opt}:=\Omega_\ast$ and $\mu_{opt}:=\mu_\ast$ satisfy ~\eqref{E:minimizer}.

\section*{Acknowledgments}

The authors thank 
David Hewett, 
Fr\'ed\'eric Magoul\`es, 
and 
Andrea Moiola 
for interesting and helpful discussions.

\def\refname{References}
\bibliographystyle{siam}
\label{bib:sec}
\bibliography{/home/anna/Documents/Optimization/Fractal/SAIM/2020/biblio.bib}
\end{document}